\documentclass[11pt,a4paper,reqno]{amsart} 

\usepackage [english]{babel} 

\DeclareFontFamily{U}{mathb}{\hyphenchar\font45}
\DeclareFontShape{U}{mathb}{m}{n}{
 <5> <6> <7> <8> <9> <10>
 <10.95> <12> <14.4> <17.28> <20.74> <24.88>
 mathb10
 }{}
\DeclareSymbolFont{mathb}{U}{mathb}{m}{n}

\DeclareMathSymbol{\sqbullet}{1}{mathb}{"0D}

\usepackage{amsmath}
\usepackage{amssymb}
\usepackage{euscript}
\usepackage{graphicx}
\usepackage[all]{xy}
\usepackage[dvipsnames]{xcolor}
\usepackage[colorlinks=true,linktocpage=true,pagebackref=false, citecolor=black,linkcolor=black]{hyperref}
\usepackage{pifont}
\usepackage{tikz,pgfplots}
\pgfplotsset{compat=1.10}
\usepgfplotslibrary{fillbetween}
\usetikzlibrary{cd,arrows,decorations.pathmorphing,backgrounds,automata,positioning,fit,matrix}
\usepackage{caption,subcaption}
\captionsetup{belowskip=-7.5pt}
\usepackage{comment}
\usepackage{xcolor}

\allowdisplaybreaks

\pgfplotsset{soldot/.style={color=black,only marks,mark=*}} \pgfplotsset{holdot/.style={color=black,fill=white,only marks,mark=*}}

\hoffset=-15mm
\voffset=-19mm
\setlength{\textwidth}{16cm}
\setlength{\textheight}{25.64cm}
\raggedbottom

\parskip=0.8ex


\newtheorem{thm}{Theorem}[section]

\newtheorem{lem}[thm]{Lemma}
\newtheorem{prop}[thm]{Proposition}

\newtheorem{cor}[thm]{Corollary}
\newtheorem{conj}[thm]{Conjecture}

\theoremstyle{definition}
\newtheorem{defn}[thm]{Definition}

\theoremstyle{remark}
\newtheorem{remark}[thm]{Remark}
\newtheorem{assumption}[thm]{Assumption}
\newtheorem{remarks}[thm]{Remarks}
\newtheorem{example}[thm]{Example}

\newtheorem{quest2}[thm]{Question}

\numberwithin{equation}{section}
\numberwithin{figure}{section}

\setcounter{secnumdepth}{4}


 \newcommand{\N}{{\mathbb N}}
 \newcommand{\R}{{\mathbb R}}
 \newcommand{\C}{{\mathbb C}}

\newcommand{\sph}{{\mathbb S}}


 \newcommand{\Cont}{{\mathcal C}}



\newcommand{\Ff}{{\EuScript F}}

\newcommand{\Ss}{{\EuScript S}}
\newcommand{\Tt}{{\EuScript T}}
\newcommand{\Ee}{{\EuScript E}}

\newcommand{\Bb}{{\EuScript B}}
\newcommand{\Cc}{{\EuScript C}}

\newcommand{\Qq}{{\EuScript Q}}

\newcommand{\Hh}{{\EuScript H}}

\newcommand{\Nn}{{\EuScript N}}

\newcommand{\Rr}{{\EuScript R}}


\newcommand{\Reg}{\operatorname{Reg}}
\newcommand{\Sing}{\operatorname{Sing}}
\newcommand{\Int}{\operatorname{Int}}
\newcommand{\cl}{\operatorname{Cl}}
\newcommand{\dist}{\operatorname{dist}}
\newcommand{\id}{\operatorname{id}}

\newcommand{\im}{\operatorname{Im}}
\newcommand{\Sth}{\operatorname{Sth}}

\newcommand{\atsr}{\Cont^\mu\text{-}\mathtt{ats}}
\newcommand{\atsinfty}{\Cont^\infty\text{-}\mathtt{ats}}


\newcommand{\x}{{\tt x}} \newcommand{\y}{{\tt y}} 
 \renewcommand{\t}{{\tt t}}


\newcommand{\veps}{\varepsilon}
\newcommand{\eps}{\epsilon}
\newcommand{\ol}{\overline}

\usepackage{scalerel}

\setcounter{tocdepth}{1}

\begin{document}
\title[Nash approximation of differentiable semialgebraic maps]{Nash approximation of differentiable\\ semialgebraic maps}

\begin{abstract}
Let $\Tt\subset\R^n$ be a semialgebraic set and let $\mu\ge0$ be a non-negative integer. We say that $\Tt$ is a {\em Nash $\mu$-approximation target space} (or a $({\mathcal N},\mu)$-${\tt ats}$ for short) if it has the following universal approximation property: {\em For each $m\in\N$ and each locally compact semialgebraic subset $\Ss\subset\R^m$, the subspace of Nash maps ${\mathcal N}(\Ss,\Tt)$ is dense in the space ${\mathcal S}^\mu(\Ss,\Tt)$ of $\Cont^\mu$ semialgebraic maps between $\Ss$ and $\Tt$}. A necessary condition to be a $({\mathcal N},\mu)$-${\tt ats}$ is that $\Tt$ is locally connected by analytic paths. In this paper we show: {\em Nash manifolds with corners are $({\mathcal N},\mu)$-${\tt ats}$ for each $\mu\geq0$}. As an application of a stronger version of the previous statement, we show that if two Nash maps $f,g:\Ss\to\Qq$, where $\Ss$ is a locally compact semialgebraic set of $\R^m$ and $\Qq$ is a Nash manifold with corners, are close enough in the (strong) Whitney's semialgebraic topology of ${\mathcal S}^0(\Ss,\Tt)$ (and consequently they are (continuous) semialgebraically homotopic), then $f,g$ are Nash homotopic.
\end{abstract}
\subjclass[2020]{Primary: 14P10, 14P20,	41A99; Secondary: 41A10, 41A20, 58A07.}
\keywords{Nash manifolds with corners, Approximation of diffferentiable semialgebraic maps, Nash homotopic Nash functions, semialgebraic sets locally connected by analytic paths}

\date{15/01/2026}
\author{Antonio Carbone}
\address{Dipartimento di Scienze dell'Ambiente e della Prevenzione, Palazzo Turchi di Bagno, C.so Ercole I D'Este, 32, Università di Ferrara, 44121 Ferrara (ITALY)}
\email{antonio.carbone@unife.it}
\thanks{The first author is supported by GNSAGA of INDAM}

\author{Jos\'e F. Fernando}
\address{Departamento de \'Algebra, Geometr\'\i a y Topolog\'\i a, Facultad de Ciencias Matem\'aticas, Universidad Complutense de Madrid, Plaza de Ciencias 3, 28040 MADRID (SPAIN)}
\email{josefer@mat.ucm.es}
\thanks{The second author is supported by PID2021-122752NB-I00}
\maketitle

\section{Introduction}\label{s1}

Approximation is a tool of great importance in many areas of mathematics. It allows to understand objects and morphisms of a certain category taking advantage of the corresponding properties of objects and morphisms in other categories that enjoy a better behavior and are dense inside the one we want to study. The importance of approximation of continuous functions is a natural question that arises from Stone-Weierstrass result on uniform approximation of continuous functions over compact sets by polynomial functions. This result provides naturally a polynomial approximation result for $\R^n$-valued continuous maps $f:K\to\R^n$ defined on a compact subset $K\subset\R^m$. Difficulties arise when ones tries to restrict the image of the approximating map. One way to proceed is to be more flexible with the type of approximating maps but also by considering domains of definition and target spaces in suitable tame categories.

This paper deals with a Nash approximation problem in the differentiable and continuous semialgebraic category and we also present some applications of the approximation results to semialgebraic homotopies. Recall that a set $\Ss\subset\R^m$ is \em semialgebraic \em if it is a Boolean combi\-nation of sets defined by polynomial equalities and inequalities. We endow $\R^m$ with the Euclidean topology and define the {\em distance between two points} $x,y\in\R^m$ as $d(x,y):=\|x-y\|$. The category of semialgebraic sets is closed under basic boolean operations but also under usual topological operations: taking closures (denoted by $\cl(\cdot)$), interiors (denoted by $\Int(\cdot)$), connected components, etc. Given a set $A\subset\R^m$ and a point $x\in\R^m$, we define $\dist(x,A):=\inf\{d(x,y):\ y\in A\}$. It holds $\dist(x,A)=0$ if and only if $x\in\cl(A)$. In addition, if $K\subset\R^m$ is a compact set, we define $\dist(K,A):=\min\{\dist(x,A):\ x\in K\}=\inf\{d(x,y):\ x\in K, y\in A\}$.

Let $\Ss\subset\R^m$ and $\Tt\subset\R^n$ be (non-empty) semialgebraic sets. A map $f:\Ss\to\Tt$ is \em semialgebraic \em if its graph is a semialgebraic subset of $\R^{m+n}$. In particular, given a semialgebraic set $A\subset\R^m$ the functions $\dist(\cdot,A):\R^m\to[0,+\infty)$ is a continuous semialgebraic function whose zero set is $\cl(A)$. A {\em Nash map} $f:\Omega\to\R^n$ on an open semialgebraic set $\Omega\subset\R^m$ is both a $\Cont^\infty$ and a semialgebraic map on $\Omega$. By \cite[Prop.8.1.8]{bcr} these maps coincide with those that are both analytic and semialgebraic maps on $\Omega$. Let us denote the ring of Nash functions $f:\Omega\to\R$ on $\Omega$ with $\Nn(\Omega)$.

More generally, let $\Ss\subset\R^m$ be a semialgebraic set. We say that a function $f:\Ss\to\R$ is a \em Nash function \em if there exist an open semialgebraic neighborhood $\Omega\subset\R^m$ of $\Ss$ and a Nash extension $F:\Omega\to\R$ of $f$. We denote $\Nn(\Ss)$ the ring of Nash functions on $\Ss$ and we refer the reader to \cite{bfr,fg2,fgh3} for further results about this ring. A map $f:=(f_1,\ldots,f_n):\Ss\to\Tt$ between semialgebraic sets $\Ss\subset\R^m$ and $\Tt\subset\R^n$ is {\em Nash} if its components $f_i:\Ss\to\R$ are Nash functions on $\Ss$. Denote $\Nn(\Ss,\Tt)$ the set of Nash maps on $\Ss$ that have $\Tt$ as target space.

Let $\mu\geq0$ be a non-negative integer. A function on the open semialgebraic set $\Omega\subset\R^m$ is ${\mathcal S}^\mu$ if it is $\Cont^\mu$ and semialgebraic. Let $\Ss\subset\R^m$ be a semialgebraic set. We say that $f:\Ss\to\R$ is a \em ${\mathcal S}^\mu$ function \em if there exist an open semialgebraic neighborhood $\Omega\subset\R^m$ of $\Ss$ and an ${\mathcal S}^\mu$ extension $F:\Omega\to\R$ of $f$. We denote ${\mathcal S}^\mu(\Ss)$ the ring of ${\mathcal S}^\mu$ functions on $\Ss$ and we refer the reader to \cite{bfg} for further results about this ring. The case $\mu=0$ correspond to the ring ${\mathcal S}^0(\Ss)$, which in general differs from the ring of continuous semialgebraic functions on $\Ss$, see \cite[Thm.1.1]{bfg} and \cite[Thm.5]{fe1}. However, if $\Ss$ is locally compact both rings coincide as a consequence of \cite{dk}. A map $f:=(f_1,\ldots,f_n):\Ss\to\Tt$ between semialgebraic sets $\Ss\subset\R^m$ and $\Tt\subset\R^n$ is ${\mathcal S}^\mu$ if its components $f_i:\Ss\to\R$ are ${\mathcal S}^\mu$ functions on $\Ss$. Denote with ${\mathcal S}^\mu(\Ss,\Tt)$ the set of ${\mathcal S}^\mu$ maps on $\Ss$ that have $\Tt$ as target space. 

In \cite{kp2,at,th} the authors made a careful analysis of an intrinsic definition of ${\mathcal S}^\mu$-functions in terms of jets of order $\mu$ of (continuous) semialgebraic functions \cite[Def.1.1]{at}. If in addition $m=2$, Fefferman-Luli proved in \cite{fl} that $\Cont^\mu$ functions on a closed semialgebraic set that are semialgebraic (intrinsic description) coincide with ${\mathcal S}^\mu$ functions (extrinsic description). In \cite[Cor.1.7]{bcm} Bierstone-Campesato-Milman presented a weak version for any dimension $m$ of the implication between the intrinsic and extrinsic descriptions of ${\mathcal S}^\mu$ functions, via an intermediate function $t:\N\to\N$, which encodes {\em a certain loss of differentiability}. Namely, if $f:\Ss\to\R$ is a semialgebraic function on a closed semialgebraic set $\Ss\subset\R^n$ and there exist an open semialgebraic neighborhood $\Omega\subset\R^n$ of $\Ss$ and a $\Cont^{t(p)}$ differentiable function $G:\Omega\to\R$ such that $G|_\Ss=f$, then there exists an ${\mathcal S}^p$-function $F:\Omega\to\R$ such that $F|_\Ss=f$. The previous result is used in \cite[Appendix B]{fgh3} to analyze the equality between smooth semialgebraic functions and Nash functions on semialgebraic sets.

A semialgebraic set $\Tt\subset\R^n$ is a {\em Nash $\mu$-approximation target space} (or a $({\mathcal N},\mu)$-${\tt ats}$ for short) if it has the following universal approximation property: {\em For each $m\in\N$ and each locally compact semialgebraic subset $\Ss\subset\R^m$, the subspace of Nash maps ${\mathcal N}(\Ss,\Tt)$ is dense in the space ${\mathcal S}^\mu(\Ss,\Tt)$ of ${\mathcal S}^\mu$ maps between $\Ss$ and $\Tt$} (endowed with the ${\mathcal S}^\mu$ topology that we recall in \S\ref{wst}).

A semialgebraic set $M\subset\R^n$ is called an (affine) \em Nash manifold \em if it is in addition a smooth (that is, $\Cont^\infty$) submanifold of (an open semialgebraic subset of) $\R^n$. By \cite[Prop.8.1.8 \& Cor.9.3.10]{bcr} Nash manifolds coincide with analytic submanifolds of (an open semialgebraic subset of) $\R^n$ that are themselves semialgebraic sets. A semialgebraic set $\Hh\subset\R^n$ is an (affine) {\em Nash manifold with boundary} if it is in addition a smooth (that is, $\Cont^\infty$) manifold with boundary. In particular, the boundary $\partial\Hh$ of $\Hh$ is a semialgebraic set that is in addition a smooth manifold, so it is a Nash manifold. A semialgebraic set $\Qq\subset\R^n$ is an (affine) {\em Nash manifold with corners} if it is in addition a smooth (that is, $\Cont^\infty$) manifold with corners. We will come back to Nash manifolds with corners in Section \ref{s3}.

\subsection{State of the art}
In the literature there are many approximation results of algebraic/semi\-algebraic nature determined by the geometry of the target space, as it happens with the main results of this article. We recall here some of them, which are very representative about the actual picture of this type of problems.

\subsubsection*{${\mathcal S}^\mu$ maps by Nash maps} 
Efroymson's approximation theorem \cite[\S1]{ef} ensures that continuous semialgebraic functions can be approximated by Nash functions on a Nash manifold. This statement was improved by Shiota in many directions \cite{sh}, for instance, providing a similar approximation result for ${\mathcal S}^\mu$ functions using (strong) Whitney's ${\mathcal S}^\mu$ topology and proving relative versions of such an ${\mathcal S}^\mu$ approximation result. The previous results can be extended to approximate ${\mathcal S}^\mu$ maps $f:\Ss\to N$ from a locally compact semialgebraic set $\Ss\subset\R^m$ to a Nash manifold $N\subset\R^n$ by Nash maps $g:\Ss\to N$ in the ${\mathcal S}^\mu$ topology, making use of a suitable Nash tubular neighborhood of $N$ in $\R^n$ (see \cite[Cor.8.9.5]{bcr} and \cite[Lem.I.3.2]{sh}), in other words, Nash manifolds are $({\mathcal N},\mu)$-${\tt ats}$ for each $\mu\geq0$. In addition, there are approximation results for ${\mathcal S}^\mu$ maps between Nash sets with monomial singularities (see \cite[Thm.1.7]{bfr} and also \cite{bfr,fe2,fgr} for further applications of this type of approximation results), however the latter results do not allow to decide whether Nash sets with monomial singularities are or not $({\mathcal N},\mu)$-${\tt ats}$ for some $\mu\geq0$.

\subsubsection*{${\mathcal S}^0$ maps by ${\mathcal S}^\mu$ maps}
In \cite{fgh1} it is approached the problem of uniform approximation of continuous semialgebraic maps $f:\Ss\to\Tt$ from a compact semialgebraic set $\Ss$ to an arbitrary semialgebraic set $\Tt$ by ${\mathcal S}^\mu$ maps $g:\Ss\to\Tt$ for a fixed integer $\mu\geq1$. The techniques developed in \cite{fgh1} together with Paw\l ucki's $\Cont^\mu$-triangulations results presented in \cite{os,p1,p2} allow to prove that such a uniform approximation is always possible. In \cite{ca} the first author made use of Paw\l ucki's techniques of $\Cont^\mu$-triangulation \cite{p1} to show that all continuous semialgebraic maps between compact semialgebraic sets can be approximated by differentiable semialgebraic maps without changing their image after the approximation. The argument is an interplay between semialgebraic geometry and PL geometry and makes use of a `surjective semiagebraic version' of the finite simplicial approximation theorem.

\subsubsection*{$\Cont^0$ maps by $\Cont^\mu$ maps}
Let $Y\subset\R^n$ be a triangulable set and let $\mu$ be either a positive integer or $\mu=\infty$. We say that $Y$ is a $\Cont^\mu$-approximation target space, or a $\atsr$ for short, if it has the following universal approximation property: \em For each $m\in\N$ and each locally compact subset $X$ of~$\R^m$, each continuous map $f:X\to Y$ can be approximated by $\Cont^\mu$ maps $g:X\to Y$ with respect to the (strong) Whitney's $\Cont^0$ topology\em. Using approximation techniques developed in \cite{fgh2} it holds: \em if $Y$ is weakly $\Cont^\mu$ triangulable, then $Y$ is a $\atsr$\em. This result applies to relevant classes of triangulable sets, namely: (1) every locally compact polyhedron is a $\atsinfty$, (2) every set that is locally $\Cont^\mu$ equivalent to a polyhedron is a $\atsr$ (this includes $\Cont^\mu$ submanifolds with corners of $\R^n$) and (3) every locally compact locally definable set of an arbitrary o-minimal structure is by \cite{p1,p2} a $\atsr$ (this includes locally compact locally semialgebraic sets and locally compact subanalytic sets) for each $\mu\geq1$. In addition: \em if $Y$ is a global analytic set, then each proper continuous map $f:X\to Y$ can be approximated by proper $\Cont^\infty$ maps $g:X\to Y$\em. 

\subsubsection*{$\Cont^{\infty}$ maps by regular maps}
A non-singular real algebraic set $Y$ is said to have the {\em approximation property} if for every non-singular real algebraic set $X$ the following property holds: {\em if $f:X\to Y$ is a $\Cont^{\infty}$ map that is homotopic to a regular map, then $f$ can be approximated in the $\Cont^{\infty}$ topology by regular maps}. There is a large literature concerning this topic, but we focus on \cite{bak}, where Banecki and Kucharz fully characterize the varieties $Y$ with the approximation property. The authors also characterize the varieties $Y$ with the approximation property combined with a suitable interpolation condition. Some of the previous results have variants concerning the regular approximation of continuous maps defined on (possibly singular) real algebraic sets. In the same vein we refer the reader to \cite{bk1}.

Let $X$ and $Y$ be non-singular real algebraic sets of positive dimension such that $X$ is compact. The set ${\mathcal R}(X,Y)$ of regular maps from $X$ to $Y$ turns out to be dense in the corresponding space ${\mathcal N}(X,Y)\subset\Cont^\infty(X,Y)$ of Nash maps endowed with the ${\mathcal C}^\infty$ compact-open topology only in exceptional cases. If $Y$ is `generic' in a suita\-ble way, ${\mathcal R}(X,Y)$ is an `extremely small' closed subset of ${\mathcal N}(X,Y)$, see \cite{gh06-1,gh06-2}. This lack of regular maps between non-singular real algebraic sets seems to be the main obstru\-ction for an extension of the Nash-Tognoli algebraization techniques from smooth manifolds to singular polyhedral spaces and, in particular, to compact Nash sets, see \cite{ak,gt}. It is worthwhile mentioning that compact real algebraic sets with at least two points do not have tubular neighborhoods with regular retractions \cite[Thm.2]{gh0}

\subsubsection*{$\Cont^{\infty}$ maps by $\mu$-regulous maps}

Let $X$, $Y$ be non-singular real algebraic sets. A map $f:X\to Y$ is {\em $\mu$-regulous}, where $\mu\geq0$ is a non-negative integer, if it is of class $\Cont^\mu$ and the restriction of $f$ to some Zariski open dense subset of $X$ is a regular map. A real algebraic set $Y$ of dimension $d$ is {\em uniformly rational} if each of its points has a Zariski open neighborhood which is biregularly isomorphic to a Zariski open subset of $\R^d$. Assuming that $Y$ is uniformly rational, and $\mu>0$, Kucharz proves in \cite{kz3} that a $\Cont^\infty$ map $f:X\to Y$ can be approximated by $\mu$-regulous maps in the $\Cont^\mu$ topology if and only if $f$ is homotopic to a $\mu$-regulous map. The class of uniformly rational real algebraic varieties includes spheres, Grassmannians and real rational surfaces, and is stable under blowing up non-singular centers. Furthermore, taking $Y=\sph^p$ (the unit $p$-dimensional sphere), the author obtains several new results on approximation of $\Cont^\infty$ maps from $X$ into $\sph^p$ by $\mu$-regulous maps in the $\Cont^\mu$ topology, for $\mu\geq0$.

More generally, given a finite simplicial complex $K$ in $\R^n$ and a real algebraic set $Y$, by a {\em $K$-regular map} $|K|\to Y$ we mean a continuous map whose restriction to every simplex in $K$ is a regular map. A real algebraic set $Y$ is {\em uniformly retract rational} if for every point $y\in Y$ there is a Zariski open neighborhood $V\subset Y$ of $y$ such that the identity map of $V$ is the composite of regular maps $V\to W\to V$, where $W\subset\R^d$ is a Zariski open set for some $d$ depending on $y$. A simplified version of the main result in \cite{bik} says that if $Y$ is a uniformly retract rational set and if $0\leq\nu\leq\mu$ are integers, every $\Cont^\nu$ map $|K|\to Y$ can be approximated in the $\Cont^\nu$ topology by $K$-regular maps of class $\Cont^\mu$. 
 
\subsubsection*{$\Cont^0$ maps by continuous rational maps} 
Kucharz has studied deeply approximation results of continuous maps between a compact non-singular real algebraic set $X$ and a sphere $\sph^n$ by continuous rational maps. As $X$ is compact the author considers on the space $\Cont^0(X,\sph^n)$ of continuous maps from $X$ to $\sph^n$ the compact-open topology. In case $X$ has dimension $n$, the space ${\mathcal R}_0(X,\sph^n)$ of (nice) continuous rational maps from $X$ to $\sph^n$ is dense in $\Cont^0(X,\sph^n)$ (see \cite[Thm.1.2, Cor.1.3]{kz2}). In addition, for any pair $(m,n)$ of non-negative integers, the set ${\mathcal R}_0(\sph^m,\sph^n)$ is dense in $\Cont^0(\sph^m,\sph^n)$ (see \cite[Thm.1.5]{kz2}). In general ${\mathcal R}_0(X,\sph^n)$ needs not to be dense in $\Cont^0(X,\sph^n)$. Simple obstructions can be expressed in terms of homology or cohomology classes representable by algebraic subsets. We refer the reader to \cite{kz1,kz2} for further details.

\subsection{Main results}
We state the main results of this work, proved in Sections \ref{s4} and \ref{s6}. Their proofs relay heavily on some tools already developed in \cite{cf2}.

\begin{thm}[Nash manifold with corners]\label{thm1}
Let $\Ss\subset\R^m$ be a locally compact semialgebraic set, let $\Qq$ be a Nash manifold with corners and let $f:\Ss\to\Qq$ be an ${\mathcal S}^{\mu}$ map. Then there exist Nash maps $g:\Ss\to\Int(\Qq)$ arbitrarily close to $f$ with respect to the ${\mathcal S}^{\mu}$ topology.

Consequently, every Nash manifold with corners is an $({\mathcal N},\mu)$-${\tt ats}$ for each $\mu\geq0$. 
\end{thm}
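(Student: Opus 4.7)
The plan is to reduce to the already-known approximation statement with target the open Nash manifold $\Int(\Qq)$: if $\tilde f:\Ss\to\Int(\Qq)$ is ${\mathcal S}^\mu$, then Shiota's techniques combined with a Nash tubular neighborhood of $\Int(\Qq)$ in an open semialgebraic subset of $\R^n$ produce Nash maps $g:\Ss\to\Int(\Qq)$ arbitrarily ${\mathcal S}^\mu$-close to $\tilde f$, exactly because $\Int(\Qq)$ is a Nash manifold (and Nash manifolds are $({\mathcal N},\mu)$-${\tt ats}$ by the State of the Art in \S1). Consequently, it suffices to exhibit, for each prescribed ${\mathcal S}^\mu$ neighborhood $\Uu$ of $f$, an auxiliary ${\mathcal S}^\mu$ map $\tilde f:\Ss\to\Int(\Qq)$ lying in $\Uu$.

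To produce such a $\tilde f$, the key tool is an \emph{inward push} of $\Qq$: a Nash (or at worst ${\mathcal S}^\mu$) map $\Phi:\Qq\times[0,\varepsilon)\to\Qq$ satisfying $\Phi(\cdot,0)=\operatorname{id}_\Qq$, $\Phi(\Qq\times(0,\varepsilon))\subset\Int(\Qq)$, and $\Phi(\cdot,t)\to\operatorname{id}_\Qq$ in the ${\mathcal S}^\mu$ topology as $t\to 0^+$. Given such a $\Phi$, I would choose a strictly positive semialgebraic ${\mathcal S}^\mu$ function $t:\Ss\to(0,\varepsilon)$, sufficiently small pointwise in terms of the control data of $\Uu$ and of the modulus of continuity of the derivatives of $\Phi$ up to order $\mu$, and set $\tilde f(x):=\Phi(f(x),t(x))$. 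By construction $\tilde f$ takes values in $\Int(\Qq)$, and the chain rule ensures $\tilde f\in\Uu$. Applying the approximation result on $\Int(\Qq)$ to $\tilde f$ then yields the Nash map $g:\Ss\to\Int(\Qq)$ close to $f$ in the ${\mathcal S}^\mu$ topology, and the density statement — that $\Qq$ is an $({\mathcal N},\mu)$-${\tt ats}$ — follows immediately.

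The heart of the proof is therefore the construction of the inward push $\Phi$, which must respect the full face stratification of $\Qq$. Near a corner point modelled on $[0,\infty)^k\times\R^{d-k}$ the natural Nash inward push is the translation $((x_1,\ldots,x_k,y),t)\mapsto(x_1+t,\ldots,x_k+t,y)$, which is obviously Nash, sends the corner chart strictly into its interior for $t>0$, and is ${\mathcal S}^\mu$-close to the identity for small $t$. I expect the main obstacle to lie in globalising these local models across faces of different codimensions into a single Nash map on $\Qq\times[0,\varepsilon)$ that is strictly inward-pointing for $t>0$: the local pushes must be patched compatibly on the overlaps of corner neighborhoods. To overcome this I would rely on the structural machinery for Nash manifolds with corners developed by the authors in \cite{cf2} — in particular Nash tubular/collar neighborhoods of each face — to glue the local translations either into a global Nash inward vector field (with $\Phi$ its flow) or directly via a semialgebraic partition-of-unity argument adapted to the corner stratification, thereby closing the argument.
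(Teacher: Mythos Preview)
Your proposal is correct and matches the paper's approach: Theorem~\ref{push} constructs exactly your inward push (via a Nash tangent vector field $W$ obtained from the local corner translations glued by a partition of unity, then $\sigma(x,t):=\eta(x+tW_x)$ with $\eta$ the Nash tubular retraction of the envelope --- \emph{not} the flow of $W$, which would generally fail to be semialgebraic), and the proof of Theorem~\ref{thm1} then replaces $f$ by $\sigma(f(\cdot),\delta(\cdot))$ for a strictly positive Nash $\delta$ close to $0$ in the ${\mathcal S}^\mu$ topology before applying Shiota's approximation plus a Nash tubular neighborhood of $\Int(\Qq)$. The one point you underspecify is that the parameter function must be small in the ${\mathcal S}^\mu$ sense (all derivatives up to order $\mu$ small), not merely pointwise small; the paper devotes Lemmas~\ref{derlem3}--\ref{zeroNash} to showing such functions exist, and then invokes continuity of left composition (Proposition~\ref{leftcomp}) rather than a direct chain-rule estimate.
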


The following result is a generalization of \cite[II.5.2]{sh} when the target space is a Nash manifold with (divisorial) corners (see Definition \ref{nmwc} and Assumption \ref{nmwc0}). 

\begin{thm}[Relative Nash approximation]\label{thm2}
Let $N\subset\R^m$ be a Nash manifold, let $X$ be a Nash subset of $N$, let $\Ss$ be a closed semialgebraic subset of $N$ and let $\Qq\subset\R^n$ be a Nash manifold with corners. Let $\mu\geq 0$ be an integer and $U$ an open semialgebraic neighborhood of $X$ in $N$. Let $f:\Ss\to\Qq$ be an ${\mathcal S}^{\mu}$ map that is Nash on $\Ss\cap U$. Then there exist Nash maps $g:\Ss\to\Qq$ arbitrarily close to $f$ with respect to the ${\mathcal S}^{\mu}$ topology and such that $g(x)=f(x)$ for each $x\in X\cap\Ss$.
\end{thm}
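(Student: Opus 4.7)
\emph{Plan.} The idea is to bootstrap from the absolute approximation Theorem~\ref{thm1} together with the classical relative Nash approximation for $\R^n$-valued maps (Shiota's~\cite[II.5.2]{sh}), closing the loop with a Nash tubular retraction of $\Qq$.

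\emph{Step 1 (set-up).} Using the tubular neighborhood theory for Nash manifolds with corners developed in Section~\ref{s3} (under the ``divisorial corners'' framework of Definition~\ref{nmwc} and Assumption~\ref{nmwc0}), I fix an open semialgebraic neighborhood $V\subset\R^n$ of $\Qq$ together with a Nash retraction $\pi\colon V\to\Qq$, so that $\pi|_{\Qq}=\id_{\Qq}$. Applying Theorem~\ref{thm1} to $f$ produces a Nash map $h\colon\Ss\to\Int(\Qq)$ arbitrarily $\mathcal{S}^\mu$-close to $f$. Because $f$ is Nash on $\Ss\cap U$, after shrinking $U$ I fix a Nash extension $F\colon U\to\R^n$ of $f|_{\Ss\cap U}$, and consider the auxiliary $\mathcal{S}^\mu$ map $\eta:=h-f\colon\Ss\to\R^n$: on $\Ss\cap U$ it agrees with the Nash map $h-F$, so in particular $\eta_0:=\eta|_{X\cap\Ss}=(h-F)|_{X\cap\Ss}$ is Nash, while $\eta$ itself is as small as I wish in the $\mathcal{S}^\mu$ topology.

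\emph{Step 2 (relative approximation in $\R^n$ and projection back).} Since the target $\R^n$ is a Nash manifold, the relative Nash approximation theorem~\cite[II.5.2]{sh}, applied to the closed semialgebraic set $\Ss\subset N$, the Nash subset $X\subset N$ and the map $\eta$ --- already Nash on the neighborhood $\Ss\cap U$ of $X\cap\Ss$ in $\Ss$ --- yields a Nash map $\widetilde\eta\colon\Ss\to\R^n$ arbitrarily $\mathcal{S}^\mu$-close to $\eta$ with $\widetilde\eta|_{X\cap\Ss}=\eta_0$. Then $g_0:=h-\widetilde\eta$ is Nash, $\mathcal{S}^\mu$-close to $h-\eta=f$, and satisfies $g_0(x)=h(x)-(h(x)-F(x))=F(x)=f(x)$ for every $x\in X\cap\Ss$. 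Provided both approximations are tight enough, $g_0(\Ss)\subset V$ (being $\mathcal{S}^0$-close to $f(\Ss)\subset\Qq\subset V$), and I finally put $g:=\pi\circ g_0\colon\Ss\to\Qq$. Since $\pi|_{\Qq}=\id_{\Qq}$ and $f(X\cap\Ss)\subset\Qq$, this gives $g(x)=\pi(f(x))=f(x)$ for $x\in X\cap\Ss$; the Nash smoothness of $\pi$ together with the chain rule up to order $\mu$ transports the $\mathcal{S}^\mu$-closeness from $g_0$ to $g=\pi\circ g_0$.

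\emph{Main obstacle.} The delicate point is this final transport: propagating $\mathcal{S}^\mu$-closeness through $\pi$ in the \emph{strong} Whitney semialgebraic topology requires uniform control on the Nash derivatives of $\pi$ on a semialgebraic neighborhood of $f(\Ss)$ inside $V$, which is subtle precisely where $f(\Ss)$ meets the corner strata of $\Qq$. Identifying a global Nash retraction with suitable uniform behavior under the divisorial-corners hypothesis on $\Qq$ is the technical crux; if such a global retraction is unavailable, one would instead cover $\Qq$ by a finite Nash atlas and patch the corresponding local relative approximations through the Nash partition-of-unity techniques adapted from~\cite{cf2}.
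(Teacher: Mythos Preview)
Your argument has a fatal gap at the very first move: the Nash retraction $\pi\colon V\to\Qq$ you invoke does not exist when $\partial\Qq\neq\varnothing$. If $\pi$ is Nash (hence analytic) on an open $V\supset\Qq$ and $\pi|_\Qq=\id_\Qq$, then by the identity principle $\pi$ agrees with the identity on an open neighborhood of every point of $\Int(\Qq)$, so $\pi=\id$ on $V$ and $\pi(V)=V\not\subset\Qq$. Already for $\Qq=[0,1]\subset\R$ this kills the construction. Section~\ref{s3} contains no such retraction theory; it provides a Nash \emph{envelope} $M\supset\Qq$ (a manifold without boundary) and a Nash tubular retraction onto $M$, not onto $\Qq$. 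The same obstruction rules out your fallback of patching local retractions via a Nash atlas: a corner chart looks like $[0,\infty)^k\times\R^{d-k}$, and no local Nash retraction onto that exists either. So the final projection $g=\pi\circ g_0$ is simply unavailable, and without it you have no mechanism to force the Nash approximation $g_0$ of $f$ to take values in $\Qq$.

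The paper's proof solves exactly this problem, but by a completely different route. First it thickens $\Qq$ to a full-dimensional Nash manifold with corners $\Qq^*\subset\R^n$ via the tubular neighborhood of the envelope $M$ (Lemma~\ref{qast}), so that one may assume $\Int(\Qq)$ is open in $\R^n$ and Lemma~\ref{dist} applies. Second, it composes $f$ with the pushing family $\sigma$ of Theorem~\ref{push} along a Nash equation $\varphi$ of $X$ (Proposition~\ref{niceequations}), arranging $f^{-1}(\partial\Qq)\subset X\cap\Ss$ while keeping $f$ Nash near $X$ and unchanged on $X$. Third, it runs Shiota's relative approximation~\cite[II.5.2]{sh} but multiplies the correction term by a further power $\varphi_1\varphi_0^{p_0}$, calibrated via a \L ojasiewicz inequality so that $\|G(x)-f(x)\|<\tfrac12\dist(f(x),\partial\Qq)$ pointwise; Lemma~\ref{dist} then forces $G(\Ss)\subset\Qq$ directly, with no retraction needed. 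The key idea you are missing is precisely this replacement of a non-existent retraction by the combination ``push inside via $\sigma_\varphi$'' $+$ ``\L ojasiewicz-controlled perturbation''.
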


The proof of Theorem \ref{thm2} strongly relies on the following result that shows how to `push' a Nash manifold with corners inside its interior using a one parameter family of Nash maps.

\begin{thm}[Pushing a Nash manifold with corners inside its interior]\label{push}
Let $\Qq\subset\R^n$ be a Nash manifold with corners of dimension $d$. Then there exists a Nash map $\sigma:\Qq\times [0,1]\to\Qq,\, (x,t)\mapsto\sigma_t(x):=\sigma(x,t)$ such that 
\begin{itemize}
\item $\sigma_0(x)=x$ for each $x\in\Qq$, 
\item $\sigma_t(\Qq)\subset\Int(\Qq)$ for each $t\in(0,1]$.
\end{itemize}

In addition, there exist a $d$-dimensional Nash manifold $M$ that contains $\Qq$ as a closed subset, an open semialgebraic neighborhood $M_0\subset M$ of $\Qq$ and a Nash extension $\Sigma:M_0\times[-1,1]\to M$ of $\sigma:\Qq\times [0,1]\to\Qq$.
\end{thm}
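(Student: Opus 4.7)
My plan is to replace the (not necessarily Nash) flow of a Nash inward-pointing vector field by a Nash composition involving a Nash tubular retraction onto the ambient Nash manifold containing $\Qq$. This produces a genuine Nash one-parameter family $\sigma_t$ with the required properties without ever integrating an ODE.

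First, I invoke the structure theory of Nash manifolds with corners developed in \cite{cf2} to realise $\Qq$ as a closed semialgebraic subset of a $d$-dimensional Nash manifold $M\subset\R^N$ and to produce an open semialgebraic neighbourhood $M_0\subset M$ of $\Qq$ together with finitely many Nash functions $h_1,\ldots,h_r:M_0\to\R$ satisfying $\Qq\cap M_0=\bigcap_{i=1}^{r}\{h_i\geq 0\}$ and $\Int(\Qq)\cap M_0=\bigcap_{i=1}^{r}\{h_i>0\}$, with the zero sets $H_i:=h_i^{-1}(0)$ transverse Nash hypersurfaces in $M_0$. I also fix a Nash tubular neighbourhood $\pi:W\to M$ of $M$ in $\R^N$, granted by \cite[Cor.\,8.9.5]{bcr}. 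Using the Nash Riemannian metric induced from the Euclidean one, each $M$-gradient $\nabla_M h_i$ satisfies $(\nabla_M h_i)(h_i)=\norm{\nabla_M h_i}^2>0$ along $H_i$; a combination $V:=\sum_i\lambda_i\nabla_M h_i$ with strictly positive Nash weights $\lambda_i$, tuned so that at every corner the cross-terms do not cancel $V(h_j)$, then yields a Nash vector field on $M_0$ with $V(h_j)(p)>0$ for every $j$ and every $p\in H_j\cap\Qq$.

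Second, after multiplying $V$ by a positive Nash function vanishing near $\partial M_0$ and at infinity, so that $x+tV(x)\in W$ for every $(x,t)\in M_0\times[-1,1]$, I define
\[
\Sigma:M_0\times[-1,1]\to M,\qquad\Sigma(x,t):=\pi\bigl(x+tV(x)\bigr),
\]
which is Nash as a composition of Nash maps. Clearly $\Sigma_0=\id$, and since $\pi|_M=\id_M$ one has $d\pi_x=\id_{T_xM}$, so $\partial_t\Sigma(x,0)=V(x)$. For $x\in H_j\cap\Qq$, Taylor expansion gives $h_j(\Sigma(x,t))=tV(h_j)(x)+O(t^2)>0$ for small $t>0$; a uniform positive lower bound on $V(h_j)$ along $H_j\cap\Qq$, secured by a further Nash rescaling of $V$, promotes this to every $t\in(0,1]$, and at points where $h_j(x)>0$ positivity is preserved by continuity. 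Intersecting over $j$ yields $\Sigma_t(\Qq)\subset\Int(\Qq)$ for $t\in(0,1]$, and $\sigma:=\Sigma|_{\Qq\times[0,1]}$ is the desired Nash map.

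The central difficulty is the simultaneous positivity $V(h_j)>0$ on every face $H_j\cap\Qq$: at a corner where several hypersurfaces $H_i$ meet, the cross-terms $(\nabla_M h_i)(h_j)$ need not vanish, and a naive sum of gradients can fail. This is where the divisorial transversality of the $h_i$ supplied by \cite{cf2} is essential, either through a careful Nash-varying choice of weights $\lambda_i$ or by pasting local models through a Nash partition of unity on $M_0$. A secondary issue, bypassed by the retraction trick above rather than integrating $V$ to a flow, is that genuine ODE flows of Nash vector fields are only analytic in time and typically fail to be semialgebraic, so one cannot simply invoke an ODE theorem to produce $\Sigma$.
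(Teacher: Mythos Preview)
Your overall architecture is exactly the paper's: build a Nash vector field $V$ tangent to the envelope $M$ with $V(h_j)>0$ along each face $\{h_j=0\}\cap\Qq$, set $\Sigma(x,t)=\pi(x+tV(x))$ via a Nash tubular retraction, and verify $h_j(\Sigma(x,t))>0$ by Taylor expansion. Two points, however, are genuine gaps rather than details.

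\textbf{Missing reduction to the compact case.} The paper's first move is to invoke the Nash compactification theorem for Nash manifolds with corners (Theorem~\ref{compactclosure}) and prove the statement only for compact $\Qq$. You skip this, and without it your uniformity claims fail. Writing $h_j(\Sigma(x,t))=h_j(x)+tV(h_j)(x)+t^2G_j(x,t)$, to push all of $\Qq$ into $\Int(\Qq)$ for every $t\in(0,1]$ you need both a positive lower bound for $V(h_j)$ on $\{h_j=0\}\cap\Qq$ and an upper bound for $|G_j|$ on $\Qq\times[0,1]$; neither exists in general when $\Qq$ is non-compact. Your phrase ``a uniform positive lower bound on $V(h_j)$ \ldots\ secured by a further Nash rescaling of $V$'' is in tension with the earlier rescaling that forces $V$ to decay at infinity so that $x+tV(x)$ stays in the tubular neighbourhood: the two rescalings pull in opposite directions. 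Likewise ``at points where $h_j(x)>0$ positivity is preserved by continuity'' only gives a $t$-interval depending on $x$, not $t\in(0,1]$ uniformly. In the compact case all of this is a finite-cover argument, which is exactly what the paper does.

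\textbf{Construction of $V$.} At a corner the Gram matrix $\langle\nabla_M h_i,\nabla_M h_j\rangle$ is positive definite but need not admit a vector $\lambda$ with $\lambda_i>0$ and $(G\lambda)_j>0$ for all $j$, so the gradient-sum ansatz $V=\sum\lambda_i\nabla_M h_i$ with \emph{positive} Nash weights is not guaranteed to work. Your fallback, ``pasting local models through a Nash partition of unity on $M_0$'', is not available as stated: Nash partitions of unity do not exist in general. The paper handles this by taking, in each coordinate chart $u_i:U_i\to\R^d$, the Nash field $V_i=(d u_i)^{-1}(1,\ldots,1)$, gluing the $V_i$ with an $\mathcal{S}^1$ (not Nash) partition of unity, and then Nash-approximating the resulting $\mathcal{S}^1$ field via Shiota's theorem; compactness of $\Qq$ again ensures the strict inequalities $V(h_j)>0$ survive approximation.
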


The previous result is improved in Theorem \ref{pushdiffeo} involving Nash embeddings for each $t\in[0,1]$, however its statement is rather more technical and we present it in Section \ref{s5}.

\subsection{Applications of the main results and further developments}
We apply Theorem \ref{thm2} to study Nash approximation of (semialgebraic) homotopies, which was the starting motivating point for this paper. We were initially inspired by some results in \cite[Ch.III.\S8]{orr}.

\subsubsection{Nash homotopies}
Let $\Ss\subset\R^m$ be a locally compact semialgebraic set and $\Qq\subset\R^n$ a Nash manifold with corners. We have the following results about Nash homotopies.

\begin{thm}[Nash approximation of homotopies]\label{homotopy2}
Let $f,g:\Ss\to\Qq$ be Nash maps such that there exists a continuous semialgebraic homotopy $\Phi:\Ss\times [0,1]\to\Qq$ between $f$ and $g$. Then there exist Nash homotopies $\Psi:\Ss\times[0,1]\to\Qq$ between $f$ and $g$ arbitrarily close to $\Phi$ with respect to the continuous semialgebraic topology. In addition, if there exists $\mu\geq1$ such that $\Phi_t$ is ${\mathcal S}^\mu$ for each $t\in[0,1]$, then $\Psi_t$ is close to $\Phi_t$ in the ${\mathcal S}^\mu$-topology of ${\mathcal S}^\mu(\Ss,\Qq)$ for each $t\in[0,1]$. 
\end{thm}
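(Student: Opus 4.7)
The plan is to apply the relative approximation result Theorem \ref{thm2} to a suitable auxiliary homotopy $\tilde\Phi$ obtained from $\Phi$ by reparametrising the $t$-variable so that $\tilde\Phi$ becomes Nash on a neighborhood of $\Ss\times\{0,1\}$ while staying close to $\Phi$ and matching $\Phi$ exactly at $t=0,1$. Since $\Ss$ is locally compact semialgebraic, I can choose an open semialgebraic set $\Omega\subset\R^m$ in which $\Ss$ is closed, so that $N:=\Omega\times\R$ is a Nash manifold and $\Ss\times[0,1]$ is closed in $N$. The set $X:=\Omega\times\{0,1\}$ is a Nash subset of $N$ (the zero set of the Nash function $(x,t)\mapsto t(t-1)$) and satisfies $X\cap(\Ss\times[0,1])=\Ss\times\{0,1\}$. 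Once we have produced a suitable $\tilde\Phi$, Theorem \ref{thm2} will deliver a Nash map $\Psi:\Ss\times[0,1]\to\Qq$ arbitrarily close to $\tilde\Phi$ and satisfying $\Psi=\tilde\Phi$ on $\Ss\times\{0,1\}$, i.e. $\Psi_0=f$ and $\Psi_1=g$.

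The construction of $\tilde\Phi$ is the heart of the plan. I would pick positive continuous semialgebraic functions $\epsilon_1,\epsilon_2:\Omega\to(0,+\infty)$ (with $\epsilon_1+\epsilon_2<1$) and set
\[
\phi(x,t):=\max\!\Big(0,\min\!\Big(1,\tfrac{t-\epsilon_1(x)}{1-\epsilon_1(x)-\epsilon_2(x)}\Big)\Big),\qquad \tilde\Phi(x,t):=\Phi(x,\phi(x,t)).
\]
Then $\tilde\Phi(x,t)=\Phi(x,0)=f(x)$ whenever $t<\epsilon_1(x)$ and $\tilde\Phi(x,t)=g(x)$ whenever $t>1-\epsilon_2(x)$. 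On each of these two regions $\tilde\Phi$ depends only on $x$ through the Nash map $f$ or $g$, so it is Nash there (extend $f,g$ Nashly to $\Omega$ and precompose with the projection $N\to\Omega$). Consequently $\tilde\Phi$ is Nash on $(\Ss\times[0,1])\cap U$ with $U:=\{t<\epsilon_1(x)\}\cup\{t>1-\epsilon_2(x)\}\subset N$, which is an open semialgebraic neighborhood of $X$. The map $\tilde\Phi$ is ${\mathcal S}^0$ on $\Ss\times[0,1]$ and becomes ${\mathcal S}^\mu$ after a standard smoothing of $\phi$ at the two break-points. Closeness of $\tilde\Phi$ to $\Phi$ in the strong Whitney semialgebraic topology is achieved by shrinking $\epsilon_1,\epsilon_2$ with respect to the modulus of continuity of $\Phi$ in the $t$-direction; the flexibility of allowing $\epsilon_i$ to depend on $x$ is what accommodates the pointwise tolerance on the (possibly non-compact) locally compact space $\Ss\times[0,1]$.

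With $\tilde\Phi$ in place, Theorem \ref{thm2} applied to $\tilde\Phi:\Ss\times[0,1]\to\Qq$ with the data $N$, $X$, $U$ above produces the desired Nash map $\Psi$, close to $\tilde\Phi$ and equal to $\tilde\Phi$ on $\Ss\times\{0,1\}$. The triangle inequality $\Psi\approx\tilde\Phi\approx\Phi$ then gives the first assertion. For the second assertion, I would take $\epsilon_1,\epsilon_2$ and the smoothing of $\phi$ to be of class ${\mathcal S}^\mu$, so that $\tilde\Phi_t$ is ${\mathcal S}^\mu$-close to $\Phi_t$ for each $t$; applying Theorem \ref{thm2} in the ${\mathcal S}^\mu$-topology then transfers this slicewise closeness to $\Psi_t$.

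The principal obstacle I anticipate is the quantitative calibration of $\epsilon_1(x),\epsilon_2(x)$ and of the smoothing of $\phi$ so that $\tilde\Phi$ is simultaneously (i) Nash on a neighborhood of $\Ss\times\{0,1\}$, (ii) of class ${\mathcal S}^\mu$ globally, and (iii) close to $\Phi$ in the strong Whitney topology, where the tolerance varies pointwise on the non-compact set $\Ss\times[0,1]$. The $x$-dependence of the plateau widths is the crucial device to overcome the possibly non-uniform modulus of continuity of $\Phi$ along $t$; once this calibration is carried out, the appeal to Theorem \ref{thm2} is direct and closes the argument.
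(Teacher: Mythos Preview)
Your strategy for the first assertion is essentially the paper's: reparametrise in $t$ with an $x$-dependent plateau near $0$ and $1$ so that $\tilde\Phi$ coincides with $f$ (resp.\ $g$) on a neighborhood of $\Ss\times\{0\}$ (resp.\ $\Ss\times\{1\}$), then apply Theorem~\ref{thm2} with $X=\Omega\times\{0,1\}$. The paper writes the plateau function explicitly as $\eta_{\delta(x)}(t)$ with a single ${\mathcal S}^\mu$ width $\delta$ produced by Lemma~\ref{derlem4}, and uses Proposition~\ref{leftcomp} together with Lemma~\ref{st} to quantify ``shrink $\epsilon_i$ against the modulus of continuity of $\Phi$'', but the architecture is the same.

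For the second assertion there is a genuine gap. You assert that ``$\tilde\Phi$ \ldots becomes ${\mathcal S}^\mu$ after a standard smoothing of $\phi$'' and then propose to apply Theorem~\ref{thm2} ``in the ${\mathcal S}^\mu$-topology''. But the hypothesis is only that each \emph{slice} $\Phi_t$ is ${\mathcal S}^\mu$; nothing says $\Phi$ is differentiable in $t$, and no reparametrisation of $t$ can manufacture $t$-derivatives that are not there. Hence $\tilde\Phi$ need not belong to ${\mathcal S}^\mu(\Ss\times[0,1],\Qq)$, and Theorem~\ref{thm2} as stated does not apply. The paper anticipates exactly this obstruction by introducing the \emph{trimmed} Whitney semialgebraic topology ${\mathcal S}^\mu_\t$ in \S\ref{twst}, which controls only the $x$-derivatives; it records in Remark~\ref{rtwst}(i) that Theorem~\ref{thm2} has an ${\mathcal S}^\mu_\t$-analogue, and it is this trimmed version that is invoked. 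A second point you gloss over is that slicewise ${\mathcal S}^\mu$-closeness of $\tilde\Phi$ to $\Phi$ requires control of the $x$-derivatives of $\phi(x,t)-t=\tfrac{\delta(x)}{1-2\delta(x)}(1-2t)$ on the middle piece, which forces the plateau width $\delta$ to be ${\mathcal S}^\mu$ and close to $0$ in the ${\mathcal S}^\mu$ topology; this is exactly what Lemma~\ref{derlem4} (combined with the Fa\`a di Bruno estimate in the paper's proof) provides. Without the trimmed topology and this quantitative choice of $\delta$, your argument for the $\mu\ge 1$ part does not close.
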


\begin{cor}\label{homocor1}
Let $f,g:\Ss\to\Qq$ be Nash maps. If $f$ and $g$ are close enough with respect to the continuous semialgebraic topology, then they are Nash homotopic. 
\end{cor}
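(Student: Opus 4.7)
The plan is to reduce Corollary \ref{homocor1} to Theorem \ref{homotopy2} by constructing, from the closeness assumption, a continuous semialgebraic homotopy $\Phi:\Ss\times[0,1]\to\Qq$ with $\Phi_0=f$ and $\Phi_1=g$; once this is done, Theorem \ref{homotopy2} immediately supplies a Nash homotopy between $f$ and $g$.

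To build $\Phi$, I would combine three Nash ingredients: the Nash manifold $M\supset\Qq$ and the pushing extension $\Sigma:M_0\times[-1,1]\to M$ provided by Theorem \ref{push}; a Nash tubular neighborhood $U$ of $M$ in $\R^n$ with Nash retraction $\rho:U\to M$ (from \cite[Cor.8.9.5]{bcr}), shrunk so that $\rho(U)\subset M_0$; and a continuous semialgebraic cut-off $\tau:[0,1]\to[0,1]$ with $\tau(0)=\tau(1)=0$ and $\tau(t)>0$ on $(0,1)$, for instance $\tau(t):=t(1-t)$ suitably rescaled. Provided $f$ and $g$ are close enough in the strong semialgebraic topology, the linear segments $\{(1-t)f(x)+tg(x):t\in[0,1]\}$ lie in $U$, and I would then set
$$
\Phi(x,t):=\Sigma\bigl(\rho((1-t)f(x)+tg(x)),\,\tau(t)\bigr).
$$
Continuity, semialgebraicity, and the boundary values $\Phi_0=f$, $\Phi_1=g$ (using $\Sigma(\cdot,0)=\id$) are then immediate.

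The main obstacle is to ensure that $\Phi(x,t)$ lies in $\Qq$ for every $t\in(0,1)$, not merely in $M$. The governing fact is that $\sigma_{\tau(t)}(\Qq)\subset\Int(\Qq)$, so by continuity of $\Sigma$ any sufficiently thin open neighborhood of $\Qq$ in $M_0$ is mapped into $\Int(\Qq)$ by $\Sigma(\cdot,\tau(t))$; the local Lipschitz behaviour of $\rho$ then controls $\dist(\rho((1-t)f(x)+tg(x)),\Qq)$ by $\|f(x)-g(x)\|$. The delicate point is that the admissible thickness of this neighborhood shrinks as $\tau(t)\to 0^+$ near the endpoints, so the design of $\tau$ and the required closeness must be coupled. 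On the locally compact (possibly non-compact) $\Ss$, I would handle this by exhausting $\Ss$ by compact semialgebraic subsets, determining tolerances on each compact piece by uniform continuity of $\Sigma$ and $\rho$, and gluing them into a single strictly positive continuous semialgebraic function $\varepsilon:\Ss\to(0,\infty)$ via a semialgebraic Tietze-type extension, in the spirit of the tools already developed in the paper. Requiring $\|f(x)-g(x)\|<\varepsilon(x)$ on $\Ss$ then forces $\Phi$ to take values in $\Qq$ throughout $\Ss\times[0,1]$, and Theorem \ref{homotopy2} completes the argument.
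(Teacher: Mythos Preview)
Your overall strategy---build a continuous semialgebraic homotopy and invoke Theorem~\ref{homotopy2}---is exactly the paper's. But your construction of that homotopy is much more involved than necessary, and the ``delicate point'' you flag is a genuine gap that you do not close.

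The paper bypasses the difficulty entirely. After using Mostowski's trick to assume $\Qq$ is closed in $\R^n$, it invokes the Delfs--Knebusch result \cite{dk}: there is an open semialgebraic neighborhood $U$ of $\Qq$ and a \emph{continuous semialgebraic retraction} $\rho:\cl(U)\to\Qq$ onto $\Qq$ itself, not onto the envelope $M$. Then with $\veps(x):=\tfrac12\dist(f(x),\R^n\setminus U)$ and $\|f-g\|<\veps$, the straight-line segment $(1-t)f(x)+tg(x)$ stays in $U$, and $\rho\circ\Phi$ lands in $\Qq$ automatically---no pushing map, no endpoint degeneration, no coupling of $\tau$ with tolerances. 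Since Theorem~\ref{homotopy2} only needs a \emph{continuous} semialgebraic homotopy as input, there is no reason to insist on a Nash retraction; the $\Cont^0$ retraction onto $\Qq$ is exactly the right tool here.

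Your route---retract onto $M$ via a Nash tubular neighborhood, then push into $\Qq$ via $\Sigma(\cdot,\tau(t))$---can likely be salvaged, but not by ``uniform continuity of $\Sigma$ and $\rho$'' alone. What you actually need is the quantitative content of the proof of Theorem~\ref{push}: from the Taylor expansion $h_j(\Sigma(y,s))=h_j(y)+s\,d_yh_j(W_y)+O(s^2)$ one sees that $\Sigma(y,s)\in\Qq$ provided $h_j(y)\geq -c_j(x)\,s$, i.e.\ the admissible excursion outside $\Qq$ is \emph{linear} in $s$. Since $\dist\bigl(\rho((1-t)f(x)+tg(x)),\Qq\bigr)\leq L(x)\min(t,1-t)\|f(x)-g(x)\|$ and $\tau(t)=t(1-t)$ is comparable to $\min(t,1-t)$ on $[0,1]$, the two rates do match and a single bound $\|f(x)-g(x)\|<\veps(x)$ suffices. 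But none of this follows from the black-box statement of Theorem~\ref{push}; you would have to reopen its proof. Your exhaustion-by-compacts plan addresses the noncompactness of $\Ss$, which is a separate issue, and does not by itself handle the $t\to 0,1$ degeneration.
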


\begin{cor}\label{homocor2}
Let $f:\Ss\to\Qq$ be an ${\mathcal S}^{\mu}$ map. Then, $f$ is homotopic to a Nash map $g:\Ss\to\Int(\Qq)$ and the homotopy can be chosen ${\mathcal S}^{\mu}$. 
\end{cor}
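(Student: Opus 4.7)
The plan is to build the homotopy in two halves: I would first use the pushing map $\sigma$ of Theorem \ref{push} to deform $f$ into $\Int(\Qq)$, then Nash-approximate the deformation via Theorem \ref{thm1}, and finally bridge the approximation to the deformation by a straight-line homotopy inside a Nash tubular neighborhood of the ambient Nash manifold.

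For the first half I would take, on $\Ss\times[0,\tfrac12]$, the map $\Phi(x,t):=\sigma(f(x),\alpha(t))$, where $\alpha(t):=1-(1-2t)^{\mu+1}$ is a Nash polynomial reparametrization with $\alpha(0)=0$, $\alpha(\tfrac12)=1$ and $\alpha^{(k)}(\tfrac12)=0$ for $1\le k\le\mu$. Then $\Phi_0=f$, $\Phi_{1/2}=\sigma_1\circ f=:\tilde f$ takes values in $\Int(\Qq)$, and $\Phi$ is ${\mathcal S}^\mu$ since $\sigma$ is Nash and $f$ is ${\mathcal S}^\mu$. Applying Theorem \ref{thm1} to $\tilde f$ then yields a Nash map $g:\Ss\to\Int(\Qq)$ as ${\mathcal S}^\mu$-close to $\tilde f$ as I wish.

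For the second half I would exploit the ambient Nash manifold: by Theorem \ref{push}, $\Qq$ is closed in a $d$-dimensional Nash manifold $M\subset\R^n$, so $\Int(\Qq)$ is open in $M$, and by \cite[Cor.8.9.5]{bcr} there is a Nash retraction $\rho:W\to M$ from an open semialgebraic neighborhood $W\subset\R^n$ of $M$. If $g$ is ${\mathcal S}^0$-close enough to $\tilde f$, each segment $(1-s)\tilde f(x)+sg(x)$ lies in $W$ and its $\rho$-image lies in $\Int(\Qq)$. Setting $\beta(t):=(2t-1)^{\mu+1}$ on $[\tfrac12,1]$, the map
\[
G(x,t):=\rho\bigl((1-\beta(t))\tilde f(x)+\beta(t)g(x)\bigr),\qquad (x,t)\in\Ss\times[\tfrac12,1],
\]
will be an ${\mathcal S}^\mu$ homotopy from $\tilde f$ to $g$ with values in $\Int(\Qq)$.

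The concatenation $H:=\Phi$ on $[0,\tfrac12]$ and $H:=G$ on $[\tfrac12,1]$ matches at $t=\tfrac12$ through the common value $\tilde f(x)$, and the vanishing of all derivatives of $\alpha$ and $\beta$ of order $1,\ldots,\mu$ at $t=\tfrac12$ forces (via Fa\`a di Bruno) every mixed partial of $H$ of order $\le\mu$ involving $\partial_t$ to vanish on both sides of the seam, while pure $x$-partials coincide automatically. Hence $H:\Ss\times[0,1]\to\Qq$ is an ${\mathcal S}^\mu$ homotopy from $f$ to the Nash map $g:\Ss\to\Int(\Qq)$, as required. The main obstacle I anticipate is the simultaneous control of the segment inside $W$ and of its $\rho$-image inside $\Int(\Qq)$: since $\Int(\Qq)$ is only open (not closed) in $M$, the tolerance in Theorem \ref{thm1} must be taken in the strong Whitney semialgebraic topology by a strictly positive continuous semialgebraic function adapted both to $\dist(\tilde f(\cdot),\R^n\setminus W)$ and to $\dist(\tilde f(\cdot),M\setminus\Int(\Qq))$.
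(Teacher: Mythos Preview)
Your approach is correct and essentially mirrors the paper's proof: both split the homotopy into a $\sigma$-push followed by a Nash approximation bridged by a retracted straight-line homotopy, then glue the halves with a high-order reparametrization to ensure $\mathcal{S}^\mu$ smoothness at the seam (the paper packages this last step as a separate Lemma~\ref{detallito} using a single odd-power reparametrization $\eta_m$). The one simplification the paper makes is to take the Nash tubular neighborhood $(\Omega,\nu)$ of $\Int(\Qq)$ in $\R^n$ rather than of the envelope $M$, so that the retraction lands automatically in $\Int(\Qq)$ and the double-distance control you anticipate collapses to the single bound $\veps(x)=\tfrac{1}{2}\dist(h(x),\R^n\setminus\Omega)$.
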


\subsubsection{Further developments}
The main problem one would like to solve consist of: {\em characterizing the $({\mathcal N},\mu)$-${\tt ats}$ semialgebraic sets for each $\mu\geq0$}. Recall that a semialgebraic set $\Ss\subset\R^n$ is {\em connected by analytic paths} if for each pair of points $x,y\in\Ss$ there exists an analytic path $\alpha:[0,1]\to\Ss$ such that $\alpha(0)=x$ and $\alpha(1)=y$. We say that the semialgebraic set $\Ss\subset\R^n$ is {\em locally connected by analytic paths} if for each $x\in\Ss$ and each open (semialgebraic) neighborhood $U^x\subset\R^n$ of $x$, there exists an open semialgebraic neighborhood $W^x\subset U^x$ of $x$ such that $\Ss\cap W^x$ is connected by analytic paths. If $p\in\R^n$ and $\veps>0$, we denote $\ol{\Bb}_n(p,\veps)$ the {\em closed ball of center $p$ and radio $\veps$}. Its boundary $\partial\ol{\Bb}_n(p,\veps)$ is the {\em sphere $\sph^n(p,\veps)$ center $p$ and radio $\veps$}, whereas its interior is the {\em open ball ${\Bb}_n(p,\veps)$ of center $p$ and radio $\veps$}. These three families provide some examples of semialgebraic sets that are locally connected by analytic paths.

A necessary condition to be an $({\mathcal N},\mu)$-${\tt ats}$ is described in the following lemma that we prove in Section \ref{s7}.

\begin{lem}\label{fdlem1}
Let $\Tt\subset\R^n$ be a $({\mathcal N},\mu)$-${\tt ats}$ semialgebraic set. Then $\Tt$ is locally connected by analytic paths.
\end{lem}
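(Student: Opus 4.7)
The plan is to argue by contradiction. Assume $\Tt\subset\R^n$ is a $(\mathcal{N},\mu)$-${\tt ats}$ and fails to be locally connected by analytic paths at some $x\in\Tt$. By definition, there exists an open semialgebraic neighborhood $U^x\subset\R^n$ of $x$ such that for every open semialgebraic $W\subset U^x$ with $x\in W$ there are two points $y,z\in\Tt\cap W$ that are not joined by any analytic path contained in $\Tt\cap W$.

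Fix $\veps>0$ with $\overline{\Bb}_n(x,\veps)\subset U^x$, apply the witnessing property to $W:=\Bb_n(x,\veps/4)$, and pick a bad pair $y,z\in\Tt\cap W$. By the local semialgebraic conic structure of $\Tt$ at $x$ (see \cite[Prop.9.3.6]{bcr}), the concatenation of the two semialgebraic cone paths from $y$ to $x$ and from $x$ to $z$ yields a continuous semialgebraic path $\gamma_0:[0,1]\to\Tt\cap\overline{W}$ with $\gamma_0(0)=y$ and $\gamma_0(1)=z$. For $\mu\geq 1$ I would precompose $\gamma_0$ with a semialgebraic reparametrization of $[0,1]$ flat to order $\mu$ at both endpoints, producing $\gamma\in\mathcal{S}^\mu([0,1],\Tt)$ with the same image and endpoints. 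Since $[0,1]$ is locally compact and semialgebraic, the $(\mathcal{N},\mu)$-${\tt ats}$ hypothesis furnishes a Nash approximation $g:[0,1]\to\Tt$ with $\|g-\gamma\|_0$ arbitrarily small; taking $\|g-\gamma\|_0<\veps/2$ forces $g([0,1])\subset\Tt\cap\Bb_n(x,\veps)\subset\Tt\cap U^x$, and $g$ is real-analytic on $[0,1]$.

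Thus one obtains an analytic path inside $\Tt\cap U^x$ whose endpoints are arbitrarily close to $y$ and $z$ but not in general equal to them. The crux, and the step I expect to be the main obstacle, is to upgrade this approximate connection into an exact analytic path inside $\Tt\cap W$ from $y$ to $z$, which is exactly what the bad-pair hypothesis forbids. The idea is to exploit the structural constraint that the image of any real-analytic map $[0,1]\to\Tt$ is contained in a single Zariski-irreducible component of $\overline{\Tt}^{\,\mathrm{zar}}$, and more locally in a single analytically irreducible branch of the germ $(\Tt,x)$: if the failure of local analytic path-connectedness at $x$ reduces, after decomposing $(\Tt,x)$ into finitely many analytic branches, to choosing $y$ and $z$ in different branches (the two-lines-at-the-origin model being prototypical), then any Nash $g$ must stay in a single branch and hence cannot simultaneously satisfy $g(0)\approx y$ and $g(1)\approx z$. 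This produces a lower bound on $\|g-\gamma\|_0$ in terms of the Euclidean separation between the two branches containing $y$ and $z$, contradicting the density of Nash maps once $W$ is chosen small enough for this separation to dominate the approximation error.

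The technical heart of the argument is therefore the local branch analysis: showing that any failure of local analytic path-connectedness at a point of a semialgebraic set can be localized to a branch-crossing of the above type. I would handle this using Hironaka's desingularization of $\overline{\Tt}^{\,\mathrm{zar}}$, or alternatively a semialgebraic curve-selection argument together with the analytic irreducible decomposition of the germ $(\Tt,x)$, combined with the real-analytic identity principle to rule out analytic transitions between distinct branches. Once this is in place, the contradiction with the $(\mathcal{N},\mu)$-${\tt ats}$ density follows cleanly.
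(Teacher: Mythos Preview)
Your overall strategy---assume failure of local analytic path-connectedness, build a continuous semialgebraic path between two ``bad'' points, smooth it to $\mathcal{S}^\mu$, then Nash-approximate and derive a contradiction---matches the paper's. But two steps in your outline are genuine gaps, and the paper resolves them in ways that are worth seeing.

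\textbf{The endpoint problem.} You correctly flag this as the crux, but your proposed resolution (a Nash path must lie in a single Zariski or analytic branch, so it cannot go from near $y$ to near $z$) is not yet a proof and, as stated, does not obviously work: the Nash approximation $g$ need not pass through the base point $x$, so there is no germ at $x$ whose branch decomposition constrains $g$. The paper avoids this entirely by first proving that $\Tt_\veps:=\Tt\cap\overline{\Bb}_n(p,\veps)$ is pure dimensional (this is a separate short argument by the identity principle, which you do not address), then taking $M_1,M_2$ to be two connected components of the \emph{open} regular locus $\Reg(\Tt_\veps)$ that cannot be joined by an analytic path inside $\Tt_\veps$; the existence of such $M_1,M_2$ is quoted from \cite[Prop.7.8]{fe2}. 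Because the $M_i$ are open in $\Tt$, any Nash map sufficiently $\mathcal{S}^0$-close to a path from $y_1\in M_1$ to $y_2\in M_2$ automatically has endpoints in $M_1$ and $M_2$, and that is the contradiction---no branch analysis at a singular point is needed.

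\textbf{The $\mathcal{S}^\mu$ reparametrization.} Flattening at the two endpoints is not enough. Your concatenated cone path $\gamma_0$ is only continuous semialgebraic; it can fail to be $\mathcal{S}^\mu$ at finitely many interior parameter values (certainly at the midpoint where it passes through the cone vertex, and possibly elsewhere, since the local conic structure is only a semialgebraic homeomorphism). The paper handles this by locating the finite set $\eta(\alpha)\subset[-1,1]$ where the path is not Nash and reparametrizing near each such $t_0$ by $t\mapsto t_0\pm|t-t_0|^{N}$ with $N$ a multiple of $(\mu+1)$ and of the relevant Puiseux denominator, then gluing with an $\mathcal{S}^\mu$ partition of unity. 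Your write-up should do the same.
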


As a consequence of \cite[Main Thm.1.8]{fe3} we prove also in Section \ref{s7} the following result. Its converse is false as we show in Example \ref{counter}.
 
\begin{lem}\label{fdlem2}
A semialgebraic set $\Tt\subset\R^n$ connected and locally connected by analytic paths is connected by analytic paths. 
\end{lem}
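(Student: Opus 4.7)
The plan is to consider the binary relation $\sim$ on $\Tt$ defined by $x\sim y$ if and only if there exists an analytic path $\alpha:[0,1]\to\Tt$ with $\alpha(0)=x$ and $\alpha(1)=y$, and to show that it is an equivalence relation whose equivalence classes are open in $\Tt$. Once this is done, the equivalence classes form a partition of $\Tt$ by (non-empty) open sets, so each class is simultaneously open and closed; the (topological) connectedness of $\Tt$ then forces the existence of a single class, which is exactly the conclusion we seek.

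Reflexivity is immediate from constant paths and symmetry from the reparametrization $t\mapsto 1-t$. Openness of the equivalence classes is a direct consequence of the local hypothesis: for each $x\in\Tt$, pick an open semialgebraic neighborhood $W^x\subset\R^n$ of $x$ such that $\Tt\cap W^x$ is connected by analytic paths; then every $y\in\Tt\cap W^x$ is joined to $x$ by an analytic path that lies inside $\Tt\cap W^x\subset\Tt$, so the $\sim$-class of $x$ contains the open (semialgebraic) neighborhood $\Tt\cap W^x$ of $x$ in $\Tt$.

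The delicate point, and the only place where \cite[Main Thm.1.8]{fe3} is genuinely needed, is transitivity. Given analytic paths $\alpha:[0,1]\to\Tt$ from $x$ to $y$ and $\beta:[0,1]\to\Tt$ from $y$ to $z$, their continuous concatenation is in general not even $\Cont^1$ at the joining parameter $t=1/2$, so one cannot produce an analytic path by a naive pasting. My plan is to invoke \cite[Main Thm.1.8]{fe3} to carry out the analytic gluing: the image of the concatenation is a compact semialgebraic subset of $\Tt$ which inherits local analytic path-connectedness and is connected, so the cited theorem supplies an honest analytic path from $x$ to $z$ inside $\Tt$. Alternatively, one can first use semialgebraic (hence continuous) path-connectedness of $\Tt$ to join $x$ and $z$ by a continuous semialgebraic arc, cover its compact image by finitely many neighborhoods $W^{p_i}$ furnished by the local hypothesis, pick consecutive points $x=p_0,p_1,\dots,p_N=z$ with $p_i,p_{i+1}\in\Tt\cap W^{p_i}$, extract analytic sub-paths, and then appeal to \cite[Main Thm.1.8]{fe3} to replace the resulting finite chain by one global analytic path.

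The main obstacle is therefore the analytic smoothing at the junction points of concatenated analytic paths; the rest of the argument is a standard openness/connectedness dichotomy. Once transitivity is secured via \cite[Main Thm.1.8]{fe3}, the conclusion that $\Tt$ is connected by analytic paths follows formally as described in the first paragraph.
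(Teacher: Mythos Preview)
Your equivalence-class framework is sound and you correctly isolate analytic concatenation (transitivity of $\sim$) as the only real issue. But both routes you propose for that step have problems, and the paper's argument runs differently.

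Route (a) is essentially circular. The image $\im(\alpha)\cup\im(\beta)$ of the concatenation is in general \emph{not} locally connected by analytic paths: at the junction point $y$ its germ is a union of several analytic half-branches, and joining a point on one branch to a point on another by an analytic path staying inside that germ is exactly the concatenation problem you set out to solve. (A general analytic path is not even semialgebraic, so the image need not be semialgebraic either; this is fixable by first passing to Nash arcs, but the local-connectedness claim is not.)

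Route (b) is closer in spirit, but your invocation of \cite[Main Thm.1.8]{fe3} is underspecified: you do not state its hypotheses or check that a bare finite chain of analytic sub-arcs through the open sets $\Tt\cap W^{p_i}$ matches them. In the paper that theorem is fed a chain of connected \emph{Nash manifolds} $\Ss_1,\dots,\Ss_m$, Nash arcs $\beta_j:[-1,1]\to\Ss_{j-1}\cup\{q_j\}\cup\Ss_j$ linking consecutive ones through points $q_j\in\Tt$, and endpoints $x\in\cl(\Ss_k)$, $y\in\cl(\Ss_\ell)$. To manufacture that input the paper first observes (via \cite[Lem.7.1]{fe2}) that the local hypothesis forces $\Tt$ to be pure dimensional, and then takes as the $\Ss_j$ the connected components $\Tt_1,\dots,\Tt_r$ of $\Reg(\Tt)$ (the interior of the smooth locus of $\Tt$ inside its Zariski closure), whose closures cover $\Tt$. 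The local hypothesis is invoked only at the finitely many points $p_i\in\Tt\cap\cl(\Tt_i)\cap\bigcup_{j<i}\cl(\Tt_j)$ where adjacent components touch, to produce the linking Nash arcs. In short, the paper replaces your open cover of an arbitrary continuous arc by the intrinsic Nash-manifold decomposition of $\Tt$ via $\Reg(\Tt)$, and this structural preparation is what makes \cite[Main Thm.1.8]{fe3} applicable.
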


We expect that Theorem \ref{thm1} for Nash manifolds with corners can be generalized to semialgebraic sets locally connected by analytic paths, at least when $\mu=0$. We hope that the techniques already introduced in \cite{cf2,cf3} (which have further applications concerning \cite{cf1}) together with the results of this article (especially Theorem \ref{thm1}) will help to prove the following result when $\mu=0$:

\begin{conj}\label{carbone}
A semialgebraic set $\Tt\subset\R^n$ is an $({\mathcal N},0)$-${\tt ats}$ if and only if $\Tt$ is locally connected by analytic paths.
\end{conj}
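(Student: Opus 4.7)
The direction ``$({\mathcal N},0)$-${\tt ats}$ $\Rightarrow$ locally connected by analytic paths'' is Lemma \ref{fdlem1}, so the plan focuses on the converse. Fix a locally compact semialgebraic $\Ss\subset\R^m$, an ${\mathcal S}^0$ map $f:\Ss\to\Tt$, and a positive continuous semialgebraic error $\veps:\Ss\to(0,+\infty)$; the goal is a Nash map $g:\Ss\to\Tt$ with $\|g(x)-f(x)\|<\veps(x)$ for every $x\in\Ss$. The strategy is to combine a sufficiently fine semialgebraic triangulation of $\Ss$ with the local analytic-path data on $\Tt$, and then to promote the resulting piecewise construction to a genuine Nash map by feeding the relative Nash approximation Theorems \ref{thm1} and \ref{thm2} into the triangulation/path-selection machinery of \cite{cf2,cf3}.

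The steps, in order, are as follows. First, reduce to the compact case via a locally finite exhaustion $\Ss=\bigcup_jK_j$ and handle the patching with Theorem \ref{thm2}. Second, pick a semialgebraic triangulation $L$ of each $K_j$ compatible with $f$ so that every closed simplex $\sigma\in L$ satisfies $f(\sigma)\subset\Tt\cap W_\sigma$, where $W_\sigma\subset\R^n$ is an open ball on which $\Tt\cap W_\sigma$ is connected by analytic paths (available by compactness of $f(K_j)$, local analytic path-connectedness of $\Tt$ and iterated semialgebraic refinement). Third, inductively build a continuous semialgebraic map $g:\Ss\to\Tt$ by dimension of the skeleton of $L$: set $g(v)=f(v)$ at vertices; along each edge $[v_0,v_1]\subset\sigma$, join $g(v_0)$ and $g(v_1)$ by an analytic path inside $\Tt\cap W_\sigma$; extend over each higher-dimensional simplex by a cone construction, joining the already-defined boundary values to a distinguished interior point $p_\sigma\in\Tt\cap W_\sigma$ along a semialgebraic family of analytic paths. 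Fourth, promote this piecewise construction to a genuine Nash map by exploiting that near $p_\sigma$ the set $\Tt\cap W_\sigma$ can be assumed, after refining $L$, to admit a Nash parametrization by an open subset of a Nash manifold with corners, so that Theorem \ref{thm1} amalgamates the simplex-by-simplex Nash data locally, and Theorem \ref{thm2} preserves the already-Nash behavior on the lower skeleton where the pieces meet.

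The main obstacle is the cone step in stage three. Local connectedness by analytic paths is a one-dimensional hypothesis: it yields analytic paths between pairs of nearby points, but it does not directly furnish a semialgebraic family of analytic paths depending Nash-regularly on a higher-dimensional boundary parameter $x\in\partial\sigma$. Upgrading the one-parameter data to such a family of fillings, compatibly on overlapping simplices, is precisely the content of the open conjecture, and it is the type of selection result the authors expect to be achievable by the refined triangulation techniques developed in \cite{cf2,cf3}. The restriction to $\mu=0$ in Conjecture \ref{carbone} reflects that only continuity of these interpolations is required; an analogous statement for $\mu\geq 1$ would demand additional local regularity of $\Tt$ comparable to that of a Nash manifold with corners, which is the setting where Theorem \ref{thm1} applies without further hypotheses.
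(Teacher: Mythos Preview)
This statement is labeled a \emph{Conjecture} in the paper, and the paper does not prove it. The authors explicitly write that they ``expect'' and ``hope'' the techniques of \cite{cf2,cf3} together with Theorem~\ref{thm1} will eventually establish the converse direction for $\mu=0$, but no argument is given. There is therefore no ``paper's own proof'' to compare your proposal against.

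Your write-up is not a proof either, and you say so yourself: you correctly isolate the forward implication as Lemma~\ref{fdlem1}, sketch a plausible triangulation-and-path-filling strategy for the converse, and then identify the genuine gap, namely that local connectedness by analytic paths is a one-dimensional hypothesis and does not by itself yield a semialgebraic \emph{family} of analytic fillings over higher-dimensional simplices, compatible across faces. That is an honest assessment, and it matches the spirit of what the authors suggest. But as submitted this is a research programme, not a proof: the cone/extension step over simplices of dimension $\geq 2$ is exactly the missing ingredient, and nothing in your outline closes it. If you intend this as a proof attempt, you must either supply the parametric analytic-path selection result or find a different mechanism (for instance, a local Nash retraction or a local model of $\Tt$ by something to which Theorem~\ref{thm1} applies) that bypasses the cone construction entirely.
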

\begin{remark}
As a representative family of semialgebraic sets related to Conjecture \ref{carbone}, Nash manifolds with corners are locally connected by analytic paths and $({\mathcal N},\mu)$-${\tt ats}$ for each $\mu\geq0$.\hfill$\sqbullet$
\end{remark}

A related question is the following:
\begin{quest2}\label{quest}
Let $0\leq\nu\leq\mu$ be integers. Does it hold that an $({\mathcal N},\mu)$-${\tt ats}$ is an $({\mathcal N},\nu)$-${\tt ats}$?
\end{quest2}

Let $\Tt\subset\R^n$ be a semialgebraic set and let $0\leq\nu\leq\mu$ be integers. Suppose $\Tt$ is an $({\mathcal N},\nu)$-${\tt ats}$. Then every ${\mathcal S}^\nu$ map $f:\Ss\to\Tt$ where $\Ss$ is a locally compact semialgebraic set can be arbitrarily approximated by Nash maps $g:\Ss\to\Tt$ in the ${\mathcal S}^\nu$ topology. Thus, each ${\mathcal S}^\nu$ map $f:\Ss\to\Tt$ where $\Ss$ is a locally compact semialgebraic set can be arbitrarily approximated by ${\mathcal S}^\mu$ maps $h:\Ss\to\Tt$ in the ${\mathcal S}^\nu$ topology. Conversely, if every ${\mathcal S}^\nu$ map $f:\Ss\to\Tt$ where $\Ss$ is a locally compact semialgebraic set can be arbitrarily approximated by ${\mathcal S}^\mu$ maps $h:\Ss\to\Tt$ in the ${\mathcal S}^\nu$ topology and $\Tt$ is an $({\mathcal N},\mu)$-${\tt ats}$, then $\Tt$ is also an $({\mathcal N},\nu)$-${\tt ats}$. Consequently, Question \ref{quest} is equivalent to {\em determine if ${\mathcal S}^\mu(\Ss,\Tt)$ is dense in ${\mathcal S}^\nu(\Ss,\Tt)$ (with respect to the ${\mathcal S}^\nu$ topology) for each locally compact semialgebraic set $\Ss\subset\R^n$ and each $n\geq1$.} In the case $0=\nu\leq\mu$ Question \ref{quest} is still an open problem, although we know from \cite{fgh1,p1} that every ${\mathcal S}^0$ map $f:\Ss\to\Tt$ where $\Ss$ is a compact semialgebraic set can be arbitrarily approximated by ${\mathcal S}^\nu$ maps $g:\Ss\to\Tt$ in the ${\mathcal S}^0$ topology.

\begin{remark}
We are convinced that it should be possible to develop similar results in the global analytic setting. To that end one should change: (1) locally compact semialgebraic sets by locally compact sets or maybe by the more restrictive family of globally defined semianalytic sets \cite{abf}, (2) Nash sets by global analytic sets \cite{c2}, (3) Nash manifolds (resp. with boundary or more generally with corners) by analytic manifolds (resp. with boundary or more generally with corners), (4) Nash functions and maps by analytic functions and maps, (5) ${\mathcal S}^\mu$ functions and maps by ${\mathcal C}^\mu$ functions and maps, (6) Nash and ${\mathcal S}^\mu$ homotopies by analytic and ${\mathcal C}^\mu$ homotopies, between other things. Crucial tools should be surely Whitney's approximation Theorem \cite[\S1.6]{n} and Whitney's immersion Theorem \cite[\S2.15]{n}, but also the natural generalization of the tools developed in this article and in \cite{cf2,cf3}.\hfill$\sqbullet$
\end{remark}

\subsection*{Structure of the article}
The article is organized as follows. In Sections \ref{s2} and \ref{s3} we present some preliminary concepts and results that will be used in the remaining sections, with special care to the ${\mathcal S}^\mu$ topology and Nash manifolds with corners. In Section \ref{s4} we prove Theorem \ref{push}, whereas in Section \ref{s5} we improve the latter result, proving that for each integer $\mu\geq 0$ it is possible to push a Nash manifold with corners $\Qq$ inside its interior using a one parameter family of Nash diffeomorphisms (onto their images) arbitrarily close to the identity map with respect to the ${\mathcal S}^\mu$ topology. In Section \ref{s6} we prove Theorems \ref{thm1} and \ref{thm2}, which are the main results of this article. Finally, in Section \ref{s7} we present some applications of Theorem \ref{thm2} to construct Nash homotopies between Nash maps, which was our initial purpose to achieve when writing this article. In addition, we prove Lemmas \ref{fdlem1} and \ref{fdlem2} and present an enlightening example.

\section{Whitney's semialgebraic topology}\label{s2}

The purpose of this section is to recall Whitney's semialgebraic topology and to introduced trimmed Whitney's semialgebraic topology, which is a modified version of the former to handle homotopies $H:\Ss\times[0,1]\to\Tt$ constituted by ${\mathcal S}^\mu$ maps $H_t$ between the semialgebraic sets $\Ss\subset\R^m$ and $\Tt\subset\R^n$ for each value of the parameter $t\in[0,1]$.

\subsection{Mostowski's trick}

We begin with a well-known trick of Mostowski, because we use it several times along this article to embedded via a Nash map locally compact semialgebraic sets of an affine space as closed semialgebraic sets of a (maybe larger) affine space. 
Given a semialgebraic set $\Ss\subset\R^m$, we define its {\em exterior boundary} as $\delta^\bullet\Ss:=\cl(\Ss)\setminus\Ss$.

\begin{prop}[Mostowski's trick]\label{Mos}
Let $\Ss\subset\R^m$ be a locally compact semialgebraic set. Then there exists a Nash map $H:\R^m\setminus\delta^\bullet\Ss\to\R^{m+1}$ such that the image $H(\Ss)$ is a closed semialgebraic subset of $\R^{m+1}$ Nash diffeomorphic to $\Ss$. 
\end{prop}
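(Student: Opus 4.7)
The first observation is that since $\Ss$ is locally compact, the exterior boundary $\delta^\bullet\Ss=\cl(\Ss)\setminus\Ss$ is closed in $\R^m$ (the standard equivalence \emph{locally compact $\Leftrightarrow$ locally closed} for a subspace of a locally compact Hausdorff space), and consequently $\Ss$ is a closed semialgebraic subset of the open semialgebraic set $\Omega:=\R^m\setminus\delta^\bullet\Ss$. The whole problem then reduces to producing a Nash closed embedding $H:\Omega\hookrightarrow\R^{m+1}$, because restricting $H$ to the closed subset $\Ss\subset\Omega$ will automatically give a Nash diffeomorphism between $\Ss$ and the closed semialgebraic set $H(\Ss)\subset\R^{m+1}$.

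To build $H$ it suffices to exhibit a Nash function $\varphi:\Omega\to(0,+\infty)$ that is \emph{proper at $\delta^\bullet\Ss$}, i.e.\ $\varphi(x_k)\to+\infty$ for every sequence $\{x_k\}\subset\Omega$ converging to a point of $\delta^\bullet\Ss$, and then to set
\[
H:\Omega\longrightarrow\R^{m+1},\qquad H(x):=(x,\varphi(x)).
\]
Injectivity of $H$ is immediate from its first $m$ coordinates, and the projection onto the first $m$ coordinates is an explicit Nash inverse on $H(\Omega)$, so $H$ is a Nash diffeomorphism onto its image. Closedness of $H(\Omega)$ in $\R^{m+1}$ is a direct consequence of the properness of $\varphi$: if $(y_k,t_k)=H(x_k)\to(y,t)$ in $\R^{m+1}$, then $x_k=y_k\to y$ and the bound $t_k=\varphi(x_k)\to t<+\infty$ rules out $y\in\delta^\bullet\Ss$, so $y\in\Omega$ and by continuity $(y,t)=H(y)\in H(\Omega)$. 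Combining with $\Ss$ closed in $\Omega$ yields $H(\Ss)$ closed in $\R^{m+1}$.

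The real content — and the step I expect to be the main obstacle — is the existence of the Nash function $\varphi$. The natural candidate $x\mapsto 1/\dist(x,\delta^\bullet\Ss)$ is continuous and semialgebraic on $\Omega$ and has the required divergence at $\delta^\bullet\Ss$, but is not Nash because $\dist(\cdot,\delta^\bullet\Ss)$ is typically not Nash. The plan is to approximate it by a Nash function while preserving positivity and divergence behaviour. Concretely, I would consider the continuous positive semialgebraic function
\[
F(x):=\frac{1+\|x\|^2}{\dist(x,\delta^\bullet\Ss)^{2}}\quad\text{on }\Omega,
\]
which is semialgebraically proper $\Omega\to(0,+\infty)$, and invoke Efroymson's/Shiota's Nash approximation theorem on the open semialgebraic set $\Omega$ in its relative, multiplicative form to produce a Nash function $\varphi:\Omega\to(0,+\infty)$ satisfying, say, $\tfrac12 F(x)\le\varphi(x)\le 2F(x)$ for every $x\in\Omega$. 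Such a pinching forces $\varphi$ to inherit the properness of $F$, and in particular its divergence at $\delta^\bullet\Ss$.

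Once $\varphi$ is in hand the rest is routine: define $H(x)=(x,\varphi(x))$, verify as above that $H$ is a closed Nash embedding of $\Omega$ into $\R^{m+1}$, and restrict to $\Ss$. The only subtlety worth double-checking is that the multiplicative Nash approximation genuinely applies on the open semialgebraic set $\Omega$ with the continuous semialgebraic weight $1/F$, but this is exactly the type of positive approximation covered by the results of Efroymson and Shiota recalled in the introduction and used throughout the paper.
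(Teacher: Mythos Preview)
Your proposal is correct and follows the same architecture as the paper's proof: observe that $\Ss$ is closed in $\Omega:=\R^m\setminus\delta^\bullet\Ss$, build a Nash function on $\Omega$ that blows up along $\delta^\bullet\Ss$, and take its graph as the closed embedding $H(x)=(x,\varphi(x))$. The only substantive difference is in how the blowing-up Nash function is obtained. The paper bypasses approximation entirely by invoking \cite[Lem.~6]{mo}: there exists a continuous semialgebraic $h:\R^m\to\R$ with $\{h=0\}=\delta^\bullet\Ss$ which is already Nash on $\Omega$, so one can simply take $\varphi=1/h$ and read off closedness from the explicit description $H(\Omega)=\{(x,t):th(x)=1\}$ and $H(\Ss)=H(\Omega)\cap(\cl(\Ss)\times\R)$. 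Your route through Efroymson--Shiota approximation of $F=(1+\|x\|^2)/\dist(\cdot,\delta^\bullet\Ss)^2$ with relative error $\tfrac12$ is equally valid and perhaps more self-contained for a reader who has the approximation theorem at hand but not Mostowski's lemma; the trade-off is that the paper's argument is a one-liner once \cite{mo} is cited, whereas yours requires the sequential closedness check (which you carried out correctly). The factor $1+\|x\|^2$ in your $F$ is harmless but unnecessary: divergence at $\delta^\bullet\Ss$ alone suffices, since any accumulation point of a convergent sequence in $\R^{m+1}$ already has bounded first coordinate.
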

\begin{proof}
As $\Ss$ is locally compact in $\R^m$, its exterior boundary $\delta^\bullet\Ss$ is a closed semialgebraic subset of $\R^m$. By \cite[Lem.6]{mo} there exists a continuous semialgebraic function $h:\R^m\to\R$ such that $\delta^\bullet\Ss=\{h=0\}$ and $h$ is Nash on the open semialgebraic set $\R^m\setminus\delta^\bullet\Ss$. Consider the Nash diffeomorphism 
$$
H:\R^m\setminus\delta^\bullet\Ss\to M:=\{(x,t)\in\R^m\times\R:\ th(x)=1\}\subset\R^{m+1},\ x\mapsto\Big(x,\frac{1}{h(x)}\Big),
$$
whose inverse is the restriction to the Nash manifold $M$ (which is a closed subset of $\R^{m+1}$) of the projection $\pi:\R^m\times\R\to\R^m$ onto the first factor. Observe that $H(\Ss)$ is the closed semialgebraic set $\Tt:=M\cap(\cl(\Ss)\times\R)$, as required.
\end{proof}

\subsection{Whitney's semialgebraic topology}\label{wst}
Let $M\subset\R^m$ be a Nash manifold. For each integer $\mu\geq 0$ we denote with ${\mathcal S}^{\mu}(M):={\mathcal S}^{\mu}(M,\R)$ the ring of semialgebraic functions on $M$ of class $\Cont^{\mu}$. We equip ${\mathcal S}^{\mu}(M)$ with the ${\mathcal S}^{\mu}$ \textit{(strong) Whitney's semialgebraic topology (${\mathcal S}^{\mu}$ topology} in short) \cite[\S II.1]{sh}. If $\mu\geq 1$, let $\xi_1,\ldots,\xi_s$ be semialgebraic tangent fields on $M$ that span the tangent bundle $TM$ of $M$. Recall that $TM:=\{(p,v)\in M\times\R^m:\ v\in T_pM\}$ where $T_pM$ is the tangent space to $M$ at $p$ for each $p\in M$. For every strictly positive continuous semialgebraic function $\veps:M\to\R$ we denote with ${\mathcal U}_{\veps}$ the set of all functions $g\in{\mathcal S}^{\mu}(M)$ such that
\begin{equation}\label{topology}
\begin{cases}
|g|<\veps&\text{if $\mu=0$,}\\
|g|<\veps\quad\text{and}\quad|\xi_{i_1}\cdots\xi_{i_\ell}(g)|<\veps\, 
 \text{ for }\, 1\le i_1,\dots,i_\ell\le s,\, 1\le\ell\le \mu&\text{if $\mu\geq1$}.
\end{cases}
\end{equation}
The sets ${\mathcal U}_\veps$ form a basis of open neighborhoods of the zero function for a topology in ${\mathcal S}^{\mu}(M)$ (recall that ${\mathcal S}^{\mu}(M)$ is a topological ring), which does not depend on the choice of the tangent fields if $\mu\geq1$. For each $f,g\in{\mathcal S}^{\mu}(M)$ if $f-g\in{\mathcal U}_\veps$, we say that $f$ is \textit{$\veps$-close} to $g$ (with respect to the ${\mathcal S}^{\mu}$ topology). When the meaning is clear from the context, we will omit to lighten the presentation the reference to the ${\mathcal S}^{\mu}$ topology, that is, we will only say $f$ is $\veps$-close to $g$. Let $\veps^*:M\to\R$ be a strictly positive continuous semialgebraic functions such that $\veps^*\leq\veps$. Clearly, if $f$ is $\veps^*$-close to $g$, then $f$ is also $\veps$-close to $g$, because ${\mathcal U}_{\veps^*}\subset{\mathcal U}_{\veps}$. We say {\em $f$ is close to $g$} with respect to the ${\mathcal S}^{\mu}$ topology if there exists a small enough strictly positive continuous semialgebraic function $\veps:M\to\R$ such that $f-g\in{\mathcal U}_\veps$.

Let $N\subset\R^n$ be a Nash manifold. We denote with ${\mathcal S}^{\mu}(M,N)$ the space of ${\mathcal S}^\mu$ maps $f:M\to N$. On ${\mathcal S}^{\mu}(M,N)$ we consider the subspace topology given by the canonical inclusion in the following space endowed with the product topology \cite[II.1.3]{sh}
$$
{\mathcal S}^{\mu}(M,N)\hookrightarrow{\mathcal S}^{\mu}(M,\R^n)={\mathcal S}^{\mu}(M,\R)\times\overset{(n)}{\cdots}\times{\mathcal S}^{\mu}(M,\R),\ f\mapsto(f_1,\ldots,f_n).
$$
Roughly speaking, $g$ is close to $f$ when its components $g_k$ are close to the components $f_k$ of $f$. 

Let $\Ss\subset M$ be a locally compact semialgebraic set. We say that $f:\Ss\to\R^n$ is an ${\mathcal S}^{\mu}$ map on $\Ss$ if there exists an open semialgebraic neighborhood $U$ of $\Ss$ in $M$ and an ${\mathcal S}^{\mu}$ extension $F:U\to\R^n$ of $f$. As $M$ has a Nash tubular neighborhood in $\R^n$ with the corresponding Nash retraction \cite[Cor.8.9.5]{bcr}, the map $f$ is ${\mathcal S}^{\mu}$ if and only if it extends to an ${\mathcal S}^{\mu}$ map on an open semialgebraic neighborhood of $\Ss$ in $\R^m$ (as we have proposed in the Introduction), see \cite[Prop.2.C.3]{bfr}. If $n=1$, we denote with ${\mathcal S}^{\mu}(\Ss):={\mathcal S}^{\mu}(\Ss,\R)$ the ring of all ${\mathcal S}^{\mu}$ functions on $\Ss$. 

\subsubsection{Extension of ${\mathcal S}^{\mu}$ maps and invariance of the topology}\label{esmm}
If $\Ss\subset M$ is a closed semialgebraic set (which happens for instance if $M:=\R^n\setminus\delta^\bullet\Ss$) and $f\in{\mathcal S}^{\mu}(\Ss,\R^n)$, there exists an open semialgebraic neighborhood $W\subset M$ of $\Ss$ and an ${\mathcal S}^{\mu}$ map $F:W\to\R^n$ on $W$ such that $F|_\Ss=f$. Let $\sigma:M\to[0,1]$ be an ${\mathcal S}^{\mu}$ bump function such that $\sigma|_\Ss=1$ and $\sigma|_{M\setminus W}=0$. If we substitute $F$ by $\sigma F$, we may assume that the ${\mathcal S}^{\mu}$ extension $F$ of $f$ is defined on $M$. We define the ${\mathcal S}^{\mu}$-topology of ${\mathcal S}^{\mu}(\Ss,\R^n)$ as the quotient topology induced by the restriction surjective homomorphism ${\mathcal S}^{\mu}(M,\R^n)\to{\mathcal S}^{\mu}(\Ss,\R^n),\ F\mapsto f$. {\em The ${\mathcal S}^{\mu}$-topology of ${\mathcal S}^{\mu}(\Ss,\R^n)$ does not depend on the choice of the Nash manifold $M$}. 
\begin{proof}
If $\Omega_1$ and $\Omega_2$ are two open semialgebraic neighborhoods of $\Ss$ in $\R^m$ such that $\Ss$ is closed both in $\Omega_1$ and $\Omega_2$ the reader proves straightforwardly that both ${\mathcal S}^{\mu}(\Omega_1,\R^n)$ and ${\mathcal S}^{\mu}(\Omega_2,\R^n)$ induce the same ${\mathcal S}^{\mu}$-topology on ${\mathcal S}^{\mu}(\Ss,\R^n)$ (use \eqref{topology}).

If $M\subset\R^m$ is a Nash manifold that contains $\Ss$ as a closed subset, then both $\Ss$ and $M$ are contained in the open semialgebraic set $\Omega:=\R^m\setminus\delta^\bullet M$ as closed subsets and by \cite[Prop.2.C.3]{bfr} both ${\mathcal S}^{\mu}(M,\R^n)$ and ${\mathcal S}^{\mu}(\Omega,\R^n)$ induce the same topology on ${\mathcal S}^{\mu}(\Ss,\R^n)$ using the chain of surjective restriction homomorphisms ${\mathcal S}^{\mu}(\Omega,\R^n)\to{\mathcal S}^{\mu}(M,\R^n)\to{\mathcal S}^{\mu}(\Ss,\R^n)$, as required. 
\end{proof}

If $\mu=0$ and $\Ss$ is locally compact in $\R^m$, there exist by \cite{dk} an open semialgebraic neighborhood $U$ of $\Ss$ in $\R^m$ and a continuous semialgebraic retraction $\nu:U\to\Ss$. Using an ${\mathcal S}^0$ bump function $\sigma:\R^m\setminus\delta^\bullet\Ss\to\R$ such that $\sigma|_\Ss=1$ and $\sigma|_{\R^m\setminus U}=0$, we have that $F:=(\sigma f)\circ\nu:\R^m\setminus\delta^\bullet\Ss\to\R^n$ defines an ${\mathcal S}^0$ map on $\R^m\setminus\delta^\bullet\Ss$ such that $F|_\Ss=f$. The space ${\mathcal S}^0(\Ss,\R^n)$ coincides with the space of continuous semialgebraic maps from $\Ss$ to $\R^n$ and there exists a restriction surjective map ${\mathcal S}^0(\R^m\setminus\delta^\bullet\Ss,\R^n)\to{\mathcal S}^0(\Ss,\R^n),\ F\mapsto F|_\Ss$ for each $n\geq1$. Let $\Tt\subset\R^n$ be any semialgebraic set and let $\Ss\subset\R^m$ be a locally compact semialgebraic set. We consider in ${\mathcal S}^{\mu}(\Ss,\Tt)$ the subspace topology induced by the canonical inclusion ${\mathcal S}^{\mu}(\Ss,\Tt)\hookrightarrow{\mathcal S}^{\mu}(\Ss,\R^n)$. 

\subsubsection{Continuity of composition homomorphisms}
With this ${\mathcal S}^{\mu}$-topology the composition on the left is a continuous map.

\begin{prop}[{\cite[Prop.2.D.1]{bfr}}]\label{leftcomp}
Let $\Ss\subset\R^m$ and $\Tt\subset\R^n$ be locally compact semialgebraic sets and let $\Tt'\subset\R^k$ be any semialgebraic set. Let $h:\Tt\to\Tt'$ be an ${\mathcal S}^{\mu}$ map. Then, the map $h_*:{\mathcal S}^{\mu}(\Ss,\Tt)\to{\mathcal S}^{\mu}(\Ss,\Tt'),\ f\mapsto h\circ f$ is continuous with respect the ${\mathcal S}^{\mu}$ topologies. 
\end{prop}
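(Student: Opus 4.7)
The plan is to reduce the statement to the continuity of left-composition by an ambient ${\mathcal S}^\mu$ extension of $h$, and then verify this continuity via a Faà di Bruno-type estimate adapted to the ${\mathcal S}^\mu$ topology.

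First I would apply Mostowski's trick (Proposition \ref{Mos}) separately to $\Ss$ and $\Tt$, reducing to the case where $\Ss\subset\R^m$ and $\Tt\subset\R^n$ are \emph{closed} semialgebraic sets. By \S\ref{esmm} the map $h$ then admits an ${\mathcal S}^\mu$ extension $H:\R^n\to\R^k$, and every $f\in{\mathcal S}^\mu(\Ss,\Tt)$ lifts to some ${\mathcal S}^\mu$ extension $F:\R^m\to\R^n$, so that $H\circ F$ is an ${\mathcal S}^\mu$ extension of $h\circ f$. Since the ${\mathcal S}^\mu$ topology on ${\mathcal S}^\mu(\Ss,\Tt)$ and on ${\mathcal S}^\mu(\Ss,\Tt')$ is induced by the restriction surjections ${\mathcal S}^\mu(\R^m,\R^n)\to{\mathcal S}^\mu(\Ss,\R^n)$ and ${\mathcal S}^\mu(\R^m,\R^k)\to{\mathcal S}^\mu(\Ss,\R^k)$, it suffices to prove that the ambient composition
$$
H_*:{\mathcal S}^\mu(\R^m,\R^n)\to{\mathcal S}^\mu(\R^m,\R^k),\quad F\mapsto H\circ F,
$$
is continuous.

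Next, I would fix $F_0\in{\mathcal S}^\mu(\R^m,\R^n)$ and a strictly positive continuous semialgebraic function $\veps:\R^m\to\R$, and produce a strictly positive continuous semialgebraic function $\delta:\R^m\to\R$ such that $F-F_0\in{\mathcal U}_\delta$ forces $H\circ F-H\circ F_0\in{\mathcal U}_\veps$. Taking the coordinate partial derivatives as spanning tangent fields on $\R^m$, Faà di Bruno's formula expresses each iterated derivative $\partial^\alpha(H\circ F)$ with $|\alpha|\le\mu$ as a universal polynomial in the values $(\partial^\beta H)\circ F$ for $|\beta|\le\mu$ and in the derivatives $\partial^\gamma F$ for $|\gamma|\le\mu$. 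Introduce the continuous semialgebraic functions
$$
M(x):=\sup\bigl\{|\partial^\beta H(y)|:|\beta|\le\mu,\ |y-F_0(x)|\le 1\bigr\},\qquad N_0(x):=\max_{|\gamma|\le\mu}|\partial^\gamma F_0(x)|,
$$
and decompose $\partial^\alpha(H\circ F)-\partial^\alpha(H\circ F_0)$ as a telescoping sum in which exactly one inner or outer factor is replaced at a time. Each summand is bounded by a combinatorial constant times a product of powers of $M(x)$ and $N_0(x)$ times either $|F(x)-F_0(x)|$ (for an outer-factor change, via the mean value theorem applied to $\partial^\beta H$ when $|\beta|<\mu$) or $|\partial^\gamma(F-F_0)(x)|$ (for an inner-factor change). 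Choosing $\delta(x):=\veps(x)/\bigl(C_\mu(1+M(x))(1+N_0(x))\bigr)^{\mu+1}$ for a sufficiently large combinatorial constant $C_\mu$ then yields a strictly positive continuous semialgebraic function with the required property.

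The main obstacle is the top-order term $\partial^\beta H\circ F-\partial^\beta H\circ F_0$ for $|\beta|=\mu$, since here $\partial^\beta H$ is merely continuous and the mean value theorem is unavailable. I would handle this by invoking a semialgebraic modulus of continuity for $\partial^\beta H$ on the compact semialgebraic sets $\{y:|y-F_0(x)|\le 1\}$, whose existence is a routine Łojasiewicz-type argument and which yields, after substitution of $F_0(x)$, a continuous semialgebraic function of $x$. Absorbing this modulus of continuity into the denominator defining $\delta$ completes the estimate; this is the only step where the extrinsic analytic behavior of $H$ intervenes beyond pure algebraic bookkeeping.
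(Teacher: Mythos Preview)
The paper does not give its own proof of this proposition: it is simply quoted from \cite[Prop.~2.D.1]{bfr}. So there is no in-paper argument to compare against, and your proposal has to be judged on its own.

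Your overall strategy is sound. The reduction to closed $\Ss,\Tt$ via Mostowski, the extension of $h$ to an ambient $H:\R^n\to\R^k$, and the passage to continuity of $H_*$ on the ambient ${\mathcal S}^\mu$ spaces are all correct. The quotient/subspace diagram you invoke implicitly is exactly the one the paper spells out in its proof of Proposition~\ref{rightcomp}, and it would be worth writing it out the same way: continuity of $H_*$ upstairs gives continuity of $\rho_2\circ H_*=\bar H_*\circ\rho_1$, hence of $\bar H_*:{\mathcal S}^\mu(\Ss,\R^n)\to{\mathcal S}^\mu(\Ss,\R^k)$ since $\rho_1$ is a quotient map, and then $h_*$ is the restriction of $\bar H_*$ to the subspace ${\mathcal S}^\mu(\Ss,\Tt)$, landing in the subspace ${\mathcal S}^\mu(\Ss,\Tt')$.

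The Fa\`a di Bruno/telescoping estimate is the right engine for the main step. The one place that needs sharpening is the top-order term with $|\beta|=\mu$. Your phrase ``a semialgebraic modulus of continuity \dots\ whose existence is a routine \L ojasiewicz-type argument'' is too loose: \L ojasiewicz gives H\"older bounds with exponent and constant depending on the compact set $\{|y-F_0(x)|\le 1\}$, and you have not explained how to assemble these $x$-dependent data into a single strictly positive continuous semialgebraic $\delta$. A cleaner route is to set, for each $|\beta|=\mu$,
\[
\omega_\beta(x,t):=\max\bigl\{|\partial^\beta H(y)-\partial^\beta H(F_0(x))|:\ \|y-F_0(x)\|\le t\bigr\},\qquad (x,t)\in\R^m\times[0,1],
\]
which is continuous and semialgebraic with $\omega_\beta(x,0)=0$, and then take
\[
\delta_1(x):=\sup\Bigl\{t\in[0,1]:\ \max_{|\beta|=\mu}\omega_\beta(x,t)\le \veps'(x)\Bigr\},
\]
where $\veps'$ is the portion of $\veps$ allotted to these terms. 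This $\delta_1$ is strictly positive and semialgebraic; if it is not already continuous, replace it by a strictly positive continuous semialgebraic minorant (a standard fact). Intersecting with your Lipschitz-type $\delta$ for the lower-order terms then finishes the argument cleanly.
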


In general, the right composition is not continuous with respect to the ${\mathcal S}^{\mu}$ topologies, but if the involved ${\mathcal S}^{\mu}$ map is proper we have the following.

\begin{prop}\label{rightcomp}
Let $M\subset\R^m$ and $N\subset\R^n$ be Nash manifolds, $\Ss\subset M$ and $\Ss'\subset N$ closed semialgebraic sets and $\Tt\subset\R^k$ any semialgebraic set. Let $h:\Ss\to\Ss'$ be a proper ${\mathcal S}^{\mu}$ map. Then the map $h^*:{\mathcal S}^{\mu}(\Ss',\Tt)\to{\mathcal S}^{\mu}(\Ss,\Tt),\ f\mapsto f\circ h$ is continuous with respect the ${\mathcal S}^{\mu}$ topologies. 
\end{prop}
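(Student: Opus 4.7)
First I would reduce to the case $\Tt=\R$, using that ${\mathcal S}^{\mu}(\Ss,\Tt)$ and ${\mathcal S}^{\mu}(\Ss',\Tt)$ carry the subspace topologies inherited from ${\mathcal S}^{\mu}(\Ss,\R^k)={\mathcal S}^{\mu}(\Ss,\R)^k$ and ${\mathcal S}^{\mu}(\Ss',\R)^k$ with the product topology. Since $h^*$ is $\R$-linear, hence a homomorphism of topological groups, it suffices to check continuity at the zero function. By \S\ref{esmm} the topologies on ${\mathcal S}^{\mu}(\Ss,\R)$ and ${\mathcal S}^{\mu}(\Ss',\R)$ are the quotient topologies of the restriction surjections $r_M:{\mathcal S}^{\mu}(M,\R)\to{\mathcal S}^{\mu}(\Ss,\R)$ and $r_N:{\mathcal S}^{\mu}(N,\R)\to{\mathcal S}^{\mu}(\Ss',\R)$; so by the universal property of the quotient it suffices to construct a continuous linear lift $\Phi:{\mathcal S}^{\mu}(N,\R)\to{\mathcal S}^{\mu}(M,\R)$ satisfying $r_M\circ\Phi=h^*\circ r_N$. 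Fixing an ${\mathcal S}^{\mu}$ extension $\widetilde H:U\to N$ of $h$ on an open semialgebraic neighborhood $U$ of $\Ss$ in $M$ and an ${\mathcal S}^{\mu}$ bump function $\sigma:M\to[0,1]$ equal to $1$ on a neighborhood of $\Ss$ and with $\mathrm{supp}(\sigma)$ a closed subset of $M$ contained in $U$, I would set $\Phi(F):=\sigma\cdot(F\circ\widetilde H)$, extended by zero outside $U$. By construction $\Phi(F)|_\Ss=F|_{\Ss'}\circ h$, giving the required commutativity $r_M\Phi=h^*r_N$.

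\textbf{Continuity of $\Phi$ via a scalar estimate.} Given a strictly positive continuous semialgebraic $\veps:M\to\R$, the Leibniz rule and the Fa\`a di Bruno formula yield, for each multi-index $I$ with $|I|\le\mu$, a pointwise estimate
$$
|\xi^I\Phi(F)(x)|\le C(x)\cdot\max_{|J|\le\mu}|\partial^J F(\widetilde H(x))|,
$$
where $C:M\to\R_{\ge 0}$ is a fixed continuous semialgebraic function supported in $\mathrm{supp}(\sigma)$ and depending only on $\sigma$, $\widetilde H$ and their derivatives up to order $\mu$. Setting $\widetilde\veps:=\veps/(1+C)$, it suffices to exhibit a strictly positive continuous semialgebraic $\delta:N\to\R$ with $\delta(\widetilde H(x))\le\widetilde\veps(x)$ for every $x\in\mathrm{supp}(\sigma)$, for then $F\in{\mathcal U}_\delta$ forces $\Phi(F)\in{\mathcal U}_\veps$. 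Granted a closed semialgebraic subset $K\subset M$ with $\mathrm{supp}(\sigma)\subset K$ and $\widetilde H|_K:K\to N$ proper, I would define
$$
\delta(y):=\inf\big\{\widetilde\veps(x)+\|\widetilde H(x)-y\|^2:x\in K\big\}.
$$
Semialgebraicity is immediate, and evaluating the infimum at $x$ itself yields the required inequality. Strict positivity and continuity of $\delta$ follow by a standard compactness argument: any sequence $x_n\in K$ making the infimum tend to $0$ at a point $y$ would satisfy $\widetilde\veps(x_n)\to 0$ and $\widetilde H(x_n)\to y$, and properness of $\widetilde H|_K$ would produce a subsequential limit $x^*\in K$ with $\widetilde\veps(x^*)=0$, contradicting $\widetilde\veps>0$; the same compactness argument gives lower semicontinuity.

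\textbf{The main obstacle.} The hard part is the construction of $K$: properness of $h=\widetilde H|_\Ss$ does not automatically propagate to a neighborhood of $\Ss$ in $M$, so the tube has to be engineered by hand. The plan is to work inside the graph $\Gamma_{\widetilde H}\subset U\times N$, whose closed subset $\Gamma_h=\Gamma_{\widetilde H}\cap(\Ss\times\Ss')$ projects properly to $N$, and carve out a semialgebraic tube around $\Gamma_h$ whose radius in the $M$-direction decays sufficiently fast along the $N$-fibers to guarantee that the second projection remains proper on its closure; projecting the tube back to $M$ via the first coordinate produces an open semialgebraic neighborhood $W$ of $\Ss$ whose closure $\overline W$ is closed in $M$ and on which $\widetilde H$ is proper. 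One then selects $\sigma$ from the outset with $\mathrm{supp}(\sigma)\subset\overline W$ and sets $K:=\overline W$, at which point the preceding two steps close the proof. Calibrating the tube's radius so that $\overline W$ is genuinely closed in $M$ and $\widetilde H|_{\overline W}$ is genuinely proper is the delicate point.
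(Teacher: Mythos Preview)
Your overall strategy is sound and you have correctly isolated the genuine difficulty: properness of $h$ on $\Ss$ does not automatically extend to an ${\mathcal S}^\mu$ extension $\widetilde H$ on a neighborhood, and without some proper extension the pushforward $\delta$ you need cannot be built. This is exactly the content of the paper's Lemma~\ref{prepproper}, which plays the role of your ``main obstacle''.

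The paper resolves it differently from your graph-tube sketch. Rather than calibrating a tube radius, it compactifies: after replacing $h$ by $(H,f)$ (where $f$ cuts out $\Ss$) to force $H_1^{-1}(\Ss'\times\{0\})=\Ss$, it conjugates by inverse stereographic projections $\varphi_m,\varphi_{n+1}$ so that source and target sit inside spheres. The proper map $g$ then extends continuously over the north poles, and a closure-of-graph argument in $\sph^m\times\sph^{n+1}$ produces open neighborhoods $M'\supset\Ss$, $N'\supset\Ss'$ with $H|_{M'}:M'\to N'$ proper. This compactification trick is more robust than a direct tube construction, since it converts the properness condition into a compactness statement about closures in $\sph^m\times\sph^{n+1}$.

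Once the proper extension $H:M'\to N'$ is in hand, the paper also diverges from your plan: instead of your direct Leibniz/Fa\`a di Bruno estimate and the infimum construction of $\delta$, it simply invokes Shiota's result \cite[II.1.5]{sh} that $H^*:{\mathcal S}^\mu(N',\R^k)\to{\mathcal S}^\mu(M',\R^k)$ is continuous for proper $H$ between Nash manifolds, and then passes to the quotients and subspaces via a short commutative diagram. This is cleaner and avoids re-deriving the chain-rule bounds. Your approach would also work once the properness extension is secured, but the paper's route saves the explicit analysis by black-boxing it into Shiota's theorem.
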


To prove the continuity of $h^*$ we will show first that the proper ${\mathcal S}^\mu$ map $h:\Ss\to\Ss'$ extends to a proper ${\mathcal S}^\mu$ map between some open semialgebraic neighboorhoods $M'\subset M$ and $N'\subset N$ of $\Ss$ and $\Ss'$ respectively. We refer the reader to \cite[Lem.9.2]{bfr} for a result of similar nature, when dealing with local homeomorphisms.

\begin{lem}[Extension of proper ${\mathcal S}^{\mu}$ maps]\label{prepproper}
Let $M\subset\R^m$ and $N\subset\R^n$ be Nash manifolds and let $\Ss\subset M$ and $\Ss'\subset N$ be closed semialgebraic subsets. Let $h:\Ss\to\Ss'$ be a proper ${\mathcal S}^\mu$ map. Then there exist open semialgebraic neighborhoods $M'\subset M$ and $N'\subset N$ of $\Ss$ and $\Ss'$ respectively and a proper ${\mathcal S}^\mu$ map $H:M'\to N'$ that extends $h$.
\end{lem}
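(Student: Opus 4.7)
The plan is to first produce any ${\mathcal S}^{\mu}$ extension of $h$ to some open neighborhood of $\Ss$ in $M$ and then shrink this neighborhood by a gauge-function condition that encodes the properness of $h$. Via Mostowski's trick (Proposition~\ref{Mos}) applied to both $M$ and $N$, I may assume without loss of generality that $M\subset\R^m$ and $N\subset\R^n$ are closed semialgebraic subsets, so that $\phi(x):=\|x\|^2$ and $\psi(y):=\|y\|^2$ are proper continuous semialgebraic functions on $M$ and $N$ respectively.

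For the initial extension I would apply the standard ${\mathcal S}^{\mu}$-extension discussed in \S\ref{esmm}: extend $h$ to an ${\mathcal S}^{\mu}$ map $\bar h:M\to\R^n$ by means of a bump function, take a Nash tubular neighborhood $T\subset\R^n$ of $N$ with Nash retraction $\rho:T\to N$ (\cite[Cor.8.9.5]{bcr}), set $W:=\bar h^{-1}(T)\cap M$, and put $\tilde h:=\rho\circ\bar h|_{W}$, an ${\mathcal S}^{\mu}$ extension of $h$ defined on the open semialgebraic neighborhood $W$ of $\Ss$ in $M$.

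The shrinkage step is the heart of the argument. Properness of $h:\Ss\to\Ss'$ says that $h^{-1}(\{y\in\Ss':\psi(y)\le c\})$ is compact in $\Ss$ for every $c$, and therefore the semialgebraic function
\[
\beta(c):=\sup\{\phi(x):\ x\in\Ss,\ \psi(h(x))\le c\}
\]
is finite-valued and non-decreasing on $[0,\infty)$. Choose a continuous semialgebraic strictly increasing function $\gamma:[0,\infty)\to(0,\infty)$ with $\gamma(c)>\beta(c)+1$ for every $c\ge 0$, and set
\[
M':=\{x\in W:\ \phi(x)<\gamma(\psi(\tilde h(x)))\},\qquad N':=N,\qquad H:=\tilde h|_{M'}.
\]
Then $M'$ is open and semialgebraic in $M$, contains $\Ss$ (because on $\Ss$ one has $\phi(x)\le\beta(\psi(h(x)))<\gamma(\psi(h(x)))$), and $H$ is the desired ${\mathcal S}^{\mu}$ extension of $h$.

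The main obstacle I foresee is verifying that $H:M'\to N'$ is actually proper and not merely locally proper near $\Ss$. If $K\subset N$ is compact with $C:=\max_{K}\psi$, then $H^{-1}(K)\subset\{\phi\le\gamma(C)\}$, a compact subset of $M$; the delicate point is to guarantee that a convergent sequence $\{x_n\}\subset H^{-1}(K)$ has its limit $x^{*}$ inside $M'$, that is, neither at the exterior boundary of $W$ nor on the set where the strict inequality defining $M'$ degenerates. I plan to absorb this by refining $W$ at the outset to satisfy an additional semialgebraic proximity condition of the form $\dist_M(x,\Ss)<\eta(\phi(x))$ for a decaying positive semialgebraic function $\eta$, so that any bounded escape of a sequence is forced to accumulate on $\Ss\subset M'$, where the strict inequality holds with a uniform margin. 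This is a standard but technical semialgebraic manipulation, and once it is in place the properness of $H$ follows at once.
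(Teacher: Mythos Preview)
Your gauge-function strategy is a natural alternative to the paper's compactification argument, but the choice $N':=N$ is a genuine error, and the proximity refinement you sketch does not repair it.

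Take $M=N=\R$, $\Ss=\Ss'=\{0\}$, $h(0)=0$, and let $\tilde h:W\to\R$ be any \emph{bounded} ${\mathcal S}^\mu$ extension (for instance a bump function supported in $[-1,1]$). Then $\psi(\tilde h(x))$ is bounded, so $\gamma(\psi(\tilde h(x)))$ is bounded by some constant $R$, and your $M'\subset\{\phi<R\}$ is a bounded open interval. The restriction $H=\tilde h|_{M'}:M'\to\R$ is bounded, hence not proper: if $K:=\cl(\im H)$ then $H^{-1}(K)=M'$, which is not compact. Shrinking $M'$ further by a condition $\dist(x,\Ss)<\eta(\phi(x))$ only makes $M'$ smaller; $H$ remains a bounded map on a bounded open set and is still not proper to all of $N=\R$. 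The underlying reason is that any limit point $x^*\in\partial M'$ has $\tilde h(x^*)$ lying in a compact subset of $N$, so preimages of compacta in $N$ leak out to $\partial M'$.

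What is missing is that $N'$ must be shrunk as well, precisely so as to excise the image of the ``frontier'' $\partial M'$ under (a continuous extension of) $\tilde h$. The paper does exactly this, but via a different route: it passes through inverse stereographic projection to replace $\R^m,\R^n$ by spheres, extends the map continuously to the north poles using properness of $h$, analyses the closure of the graph, and then sets $N':=\sph^{n+1}\setminus G^\bullet(D)$ and $M':=G^{\bullet-1}(N')$, where $D$ is the boundary of a suitably chosen neighborhood of (the image of) $\Ss$. Your gauge approach can be salvaged along the same lines---after building $M'$, remove from $N$ a closed neighborhood of $\tilde h(\cl(M')\setminus M')$ disjoint from $\Ss'$ and pull back---but as written the argument is incomplete.
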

\begin{proof}
The proof is conducted in several steps.

\noindent{\sc Step 1.} Using suitable Nash embeddings of $M$ and $N$ in affine spaces (maybe of larger dimension, Lemma \ref{Mos}), we may assume that both $M$ and $N$ are closed Nash submanifolds of $\R^m$ and $\R^n$ and both $\Ss$ and $\Ss'$ are closed semialgebraic subsets of $M$ and $N$ respectively. As $\Ss$ and $\Ss'$ are closed semialgebraic subset of $M$ and $N$ respectively and $M$ and $N$ are closed subsets of $\R^m$ and $\R^n$ respectively, also $\Ss$ and $\Ss'$ are closed subsets of $\R^m$ and $\R^n$ respectively. Thus, we may assume $M=\R^m$ and $N=\R^n$. As $h:\Ss\to\Ss'$ is an ${\mathcal S}^\mu$ map, there exists an ${\mathcal S}^\mu$ map $H:\R^m\to\R^n$ such that $H|_{\Ss}=h$ (see \S\ref{esmm}). By \cite[Prop.I.4.5]{sh} there exists an ${\mathcal S}^\mu$ function $f$ on $\R^m$ such that $\Ss=\{f=0\}$. Define $H_1:\R^m\to\R^{n+1},\ x\mapsto(H(x),f(x))$, which is an ${\mathcal S}^\mu$ map such that $H_1^{-1}(\Ss'\times\{0\})=H^{-1}(\Ss')\cap\{f=0\}=\Ss$. 

For each integer $k\geq 1$ consider the inverse of the stereographic projection 
$$
\varphi_k:\R^k\to\sph^k\setminus\{p_k\},\ x:=(x_1,\ldots,x_k)\mapsto\Big(\frac{2x_1}{1+\|x\|^2},\ldots,\frac{2x_k}{1+\|x\|^2},\frac{-1+\|x\|^2}{1+\|x\|^2}\Big)
$$
with respect to the corresponding north pole $p_k:=(0,\ldots,0,1)\in\R^{k+1}$. Define $\Tt:=\varphi_m(\Ss)$ and $\Tt':=\varphi_n(\Ss')$, which are closed semialgebraic subsets of $\sph^m\setminus\{p_m\}$ and $\sph^{n+1}\setminus\{p_{n+1}\}$ respectively, and $G:=\varphi_{n+1}\circ H_1\circ\varphi_m^{-1}:\sph^m\setminus\{p_m\}\to\sph^{n+1}$, which is an ${\mathcal S}^\mu$ map, because so are the involved maps in the composition. In addition, the restriction $g:=G|_\Tt:=\varphi_{n+1}\circ(h,0)\circ\varphi_m^{-1}:\Tt\to\Tt'$ is proper, because so are the involved maps in the composition. As $H_1^{-1}(\Ss'\times\{0\})=\Ss$, we have $G^{-1}(\Tt')=\Tt$. As the maps $\varphi_m$ and $\varphi_n$ are Nash diffeomorphisms, so in particular proper maps, we are reduced to show: \textit{There exist an open semialgebraic neighborhood $M'\subset\sph^m\setminus\{p_m\}$ of $\Tt$ and an open semialgebraic neighborhood $N'\subset\sph^{n+1}\setminus\{p_{n+1}\}$ of $\Tt'$ such that the restriction $G|_{M'}:M'\to N'$ is a proper map.}
 
\noindent{\sc Step 2.} Define the map
$$
g_0:\Tt\sqcup\{p_m\}\to\Tt'\sqcup\{p_{n+1}\},\ z\mapsto
\begin{cases}
g(z)&\text{if $z\in\Tt$,}\\
p_{n+1}&\text{if $z=p_m$.}
\end{cases}
$$
As $\Tt$ is closed in $\sph^m\setminus\{p_m\}$, we deduce that $\Tt\sqcup\{p_m\}$ is closed in $\sph^m$ and consequently compact. Analogously, $\Tt'\sqcup\{p_{n+1}\}$ is closed in $\sph^{n+1}$, so it is a compact set. As $g$ is a proper map, $g_0$ is a continuous semialgebraic map between the compact semialgebraic sets $\Tt\sqcup\{p_m\}$ and $\Tt'\sqcup\{p_{n+1}\}$, so it is a continuous proper semialgebraic map. The graph $\Gamma(g_0)$ of $g_0$ is a closed semialgebraic subset of $\sph^m\times\sph^{n+1}$ such that 
\begin{equation}\label{chiusuragrafico0}
\Gamma(g_0)\cap((\{p_m\}\times\sph^{n+1})\cup(\sph^m\times\{p_{n+1}\}))=\{(p_m,p_{n+1})\},
\end{equation}
because $g_0(p_m)=p_{n+1}$ and $g_0^{-1}(p_{n+1})=\{p_m\}$. We deduce
\begin{equation}\label{chiusuragrafico}
\cl(\Gamma(g))\subset\Gamma(g_0)=\Gamma(g)\cup\{(p_m,p_{n+1})\}.
\end{equation}

Consider the graph $\Gamma(G)\subset\sph^m\times\sph^{n+1}$ of $G$ and its closure $C$ in $\sph^m\times\sph^{n+1}$, which is contained in $\sph^m\times\sph^{n+1}$. We claim: \textit{There exists an open semialgebraic neighborhood $W\subset\sph^m\setminus\{p_m\}$ of $\Tt$ such that the closure $C'$ of the graph $\Gamma(G|_{W})$ of $G|_{W}$ satisfies $C'\cap(\{p_m\}\times\sph^{n+1})\subset\{(p_m,p_{n+1})\}$}.

Consider the semialgebraic sets $\Gamma(g)$ and $C\cap(\{p_m\}\times\sph^{n+1})\setminus\{(p_m,p_{n+1})\}$, which are disjoint closed semialgebraic subsets of $(\sph^m\times\sph^{n+1})\setminus\{(p_m,p_{n+1})\}$, because by \eqref{chiusuragrafico} $\Gamma(g)=\cl(\Gamma(g))\setminus\{(p_m,p_{n+1})\}$ and by \eqref{chiusuragrafico0} 
\begin{multline*}
\Gamma(g)\cap (C\cap(\{p_m\}\times\sph^{n+1})\setminus\{(p_m,p_{n+1})\})\\
\subset\Gamma(g_0)\cap(((\{p_m\}\times\sph^{n+1})\cup(\sph^m\times\{p_{n+1}\}))\setminus\{(p_m,p_{n+1})\})=\varnothing.
\end{multline*}
Let $\Omega\subset(\sph^m\times\sph^{n+1})\setminus\{(p_m,p_{n+1})\}$ be an open semialgebraic neighborhood of $(\{p_m\}\times\sph^{n+1})\setminus\{(p_m,p_{n+1})\}$ such that $\cl(\Omega)\setminus\{(p_m,p_{n+1})\}$ does not meet $\Gamma(g)$, so $\cl(\Omega)\cap\Gamma(g)=\varnothing$, because $(p_m,p_{n+1})\not\in\Gamma(g)$. We deduce $(\{p_m\}\times\sph^{n+1})\setminus\Omega\subset\{(p_m,p_{n+1})\}$.

As $\Gamma(G)\setminus\cl(\Omega)$ is an open subset of $\Gamma(G)$ that contains $\Gamma(g)$ and does not meet $\cl(\Omega)$ and the projection $\pi:\sph^m\times\sph^{n+1}\to\sph^m$ onto the first factor induces a homeomorphism between $\Gamma(G)$ and $\sph^m\setminus\{p_m\}$, we deduce $\pi(\Gamma(G)\setminus\cl(\Omega))=\{x\in \sph^m\setminus\{p_m\}:\ (x,G(x))\not\in\cl(\Omega)\}$ is an open semialgebraic subset of $\sph^m\setminus\{p_m\}$ that contains $\pi(\Gamma(g))=\Tt$. 

As $\Tt$ is a closed subset of $\sph^m\setminus\{p_m\}$ and $\pi(\Gamma(G)\setminus\cl(\Omega))\subset\sph^m\setminus\{p_m\}$ is an open semialgebraic neighborhood of $\Tt$, there exists an open semialgebraic neighborhood $W\subset\pi(\Gamma(G)\setminus\cl(\Omega))$ of $\Tt$ such that $\cl(W)\setminus\{p_m\}\subset\pi(\Gamma(G)\setminus\cl(\Omega))\subset\sph^m\setminus\{p_m\}$. As $\Gamma(G|_{W})\subset\Gamma(G)\setminus\cl(\Omega)\subset C\setminus\Omega$, the closure $C'$ of $\Gamma(G|_{W})$ is contained in $C\setminus\Omega$, so 
$$
C'\cap(\{p_m\}\times\sph^{n+1})\subset(C\cap(\{p_m\}\times\sph^{n+1}))\setminus\Omega\subset\{(p_m,p_{n+1})\},
$$
as claimed.

\noindent{\sc Step 3.} Consider the map $G|_{W}:W\to\sph^{n+1}$ and observe that $(G|_{W})^{-1}(\Tt')=G^{-1}(\Tt')\cap W=\Tt$. The map $G|_{\cl(W)\setminus\{p_m\}}:\cl(W)\setminus\{p_m\}\to\sph^{n+1}$ is well-defined, because $G$ is defined on $\sph^m\setminus\{p_m\}$, and extends continuously to a semialgebraic map ${G^\bullet}:\cl(W)\cup\{p_m\}\to\sph^{n+1}$ such that ${G^\bullet}(p_m)=p_{n+1}$, because $C'\cap(\{p_m\}\times\sph^{n+1})\subset\{(p_m,p_{n+1})\}$. As $\cl(W)$ is a compact set, ${G^\bullet}$ is a proper map. Let $D:=(\cl(W)\cup\{p_m\})\setminus W$, which is a closed subset of $\cl(W)\cup\{p_m\}$ that contains $p_m$ and $\cl(W)\setminus D=W$. Thus, ${G^\bullet}(D)$ is a closed subset of $\sph^{n+1}$ that contains $p_{n+1}$. As $D\cap\Tt=\varnothing$ and ${G^\bullet}^{-1}(\Tt')=\Tt$, we have ${G^\bullet}(D)\cap\Tt'=\varnothing$, so $\Tt\cap {G^\bullet}^{-1}(G(D))={G^\bullet}^{-1}(\Tt')\cap {G^\bullet}^{-1}({G^\bullet}(D))=\varnothing$. Define $M':=\cl(W)\setminus {G^\bullet}^{-1}({G^\bullet}(D))=W\setminus {G^\bullet}^{-1}({G^\bullet}(D))$, which is an open semialgebraic neighborhood of $\Tt$ in $\sph^m\setminus\{p_m\}$, and $N':=\sph^{n+1}\setminus {G^\bullet}(D)$, which is an open semialgebraic neighborhood of $\Tt'$ in $\sph^{n+1}\setminus\{p_{n+1}\}$. We deduce ${G^\bullet}^{-1}(N')=\cl(W)\setminus {G^\bullet}^{-1}({G^\bullet}(D))=M'$ and $G|_{M'}=G^\bullet|_{M'}:M'\to N'$ is a proper ${\mathcal S}^\mu$ map that extends $g$, as required.
\end{proof}

We are ready to show Proposition \ref{rightcomp}.

\begin{proof}[Proof of Proposition \em \ref{rightcomp}]
By Lemma \ref{prepproper} there exists an open semialgebraic neighborhood $M'$ of $\Ss$ in $M$, an open semialgebraic neighborhood $N'$ of $\Ss'$ in $N$ and a proper ${\mathcal S}^\mu$ map $H:M'\to N'$ that extends $h$. We have the following commutative diagram:
$$
\xymatrix{
{\mathcal S}^{\mu}(\Ss',\Tt)\ar[d]^{\iota_1}\ar[r]^{h^*}&{\mathcal S}^{\mu}(\Ss,\Tt)\ar[d]^{\iota_2}\\
{\mathcal S}^{\mu}(\Ss',\R^k)\ar[r]^{h^*}&{\mathcal S}^{\mu}(\Ss,\R^k)\\
{\mathcal S}^{\mu}(N',\R^k)\ar[r]^{H^*}\ar[u]^{\rho_1}&{\mathcal S}^{\mu}(M',\R^k)\ar[u]^{\rho_2}
}
$$
where $\iota_1$ and $\iota_2$ are the canonical embeddings and $\rho_1$ and $\rho_2$ are the restriction quotient maps \cite[\S 2.C]{bfr}. As $H:M'\to N'$ is proper, the map $H^*$ is continuous \cite[II.1.5]{sh}, so the composition $\rho_2\circ H^*$ is continuous too. Thus, $h^*\circ \rho_1$ is continuous because $h^*\circ \rho_1=\rho_2\circ H^*$. But $\rho_1$ is a quotient map (because the target is an affine space), so $h^*:{\mathcal S}^{\mu}(\Ss',\R^k)\to{\mathcal S}^{\mu}(\Ss,\R^k)$ is continuous. As $h^*$ in the second row is continuous, $h^*\circ\iota_1$ is continuous as well. Consequently, also the map $\iota_2\circ h^*=h^*\circ\iota_1$ is continuous. As $\iota_2$ is an embedding, we conclude that $h^*:{\mathcal S}^{\mu}(\Ss',\Tt)\to{\mathcal S}^{\mu}(\Ss,\Tt)$ is continuous, as required. 
\end{proof}

\subsubsection{${\mathcal S}^\mu$ and Nash embeddings of Nash manifolds}\label{emb}
Recall that if $\mu\geq1$, an ${\mathcal S}^\mu$ (resp. Nash) map $\varphi:N_1\to N_2$ between Nash manifolds $N_1$ and $N_2$ is a {\em ${\mathcal S}^\mu$ (resp. Nash) embedding} if $\varphi:N_1\to\varphi(N_1)$ is an ${\mathcal S}^\mu$ (resp. Nash) diffeomorphism. Denote the set of ${\mathcal S}^\mu$ embeddings of $N_1$ in $N_2$ with ${\mathcal S}_{\rm emb}^\mu(N_1,N_2)$ and the set of Nash embeddings of $N_1$ in $N_2$ with ${\mathcal N}_{\rm emb}^\mu(N_1,N_2)$. Observe that ${\mathcal N}_{\rm emb}^\mu(N_1,N_2)={\mathcal S}_{\rm emb}^\mu(N_1,N_2)\cap{\mathcal N}(N_1,N_2)$. Proceeding as in the proof of \cite[Ch.2.Thm.1.4]{hi} one shows that ${\mathcal S}_{\rm emb}^\mu(N_1,N_2)$ is an open subset of ${\mathcal S}^\mu(N_1,N_2)$ with respect to the ${\mathcal S}^\mu$ topology. Consequently, ${\mathcal N}_{\rm emb}^\mu(N_1,N_2)$ is an open subset of ${\mathcal N}(N_1,N_2)$ with respect to the induced ${\mathcal S}^\mu$ topology.

\subsection{Trimmed Whitney's semialgebraic topology}\label{twst}
Let $\Ss\subset\R^m$ and $\Tt\subset\R^n$ be semialgebraic sets and fix an integer $\mu\geq1$. To approximate continuous semialgebraic homotopies $H:\Ss\times[0,1]\to\Tt$ such that $H(\cdot,t)$ is ${\mathcal S}^{\mu}$ for each $t\in[0,1]$ (but maybe the whole map $H$ is not an ${\mathcal S}^{\mu}$ map) we need to introduce a trimmed Whitney's semialgebraic topology ${\mathcal S}^{\mu}_{\t}$ that does not take care about derivatives with respect to $\t$ (see the statement of Theorem \ref{homotopy2}). We recall first the following fact.

\begin{lem}\label{st}
Let $\Ss\subset\R^m$ be a semialgebraic set and let $\veps:\Ss\times[0,1]\to\R$ be a strictly positive continuous semialgebraic function. Then there exists a strictly positive continuous semialgebraic function $\veps^*:\Ss\to\R$ such that $\veps^*(x)\leq\veps(x,t)$ for each $(x,t)\in\Ss\times[0,1]$.
\end{lem}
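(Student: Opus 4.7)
The natural candidate is the pointwise minimum
\[
\veps^*(x):=\min\{\veps(x,t):t\in[0,1]\},\qquad x\in\Ss.
\]
For each fixed $x\in\Ss$, the function $t\mapsto\veps(x,t)$ is continuous on the compact interval $[0,1]$, so the minimum is attained; in particular $\veps^*(x)$ is well defined and, since $\veps(x,\cdot)>0$ on $[0,1]$, strictly positive. By construction $\veps^*(x)\leq\veps(x,t)$ for each $(x,t)\in\Ss\times[0,1]$, which is the desired inequality. The only nontrivial points are that $\veps^*$ is semialgebraic and continuous.

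Semialgebraicity I would get at once from the Tarski--Seidenberg principle: the graph of $\veps^*$ is
\[
\{(x,y)\in\Ss\times\R:\bigl(\forall t\in[0,1]:y\leq\veps(x,t)\bigr)\wedge\bigl(\exists t\in[0,1]:y=\veps(x,t)\bigr)\},
\]
which is first-order definable over the semialgebraic datum $\veps$ and so semialgebraic. Equivalently, one may use that the image of a semialgebraic set under a projection is semialgebraic to see that $\veps^*(x)=\inf_{t}\veps(x,t)$ is a semialgebraic function of $x$.

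For continuity at a fixed $x_0\in\Ss$ I would use a tube-lemma argument. Given $\eta>0$, continuity of $\veps$ on a neighborhood of the compact slice $\{x_0\}\times[0,1]\subset\Ss\times[0,1]$ provides, by the usual covering-of-$[0,1]$ argument, a neighborhood $U$ of $x_0$ in $\Ss$ such that
\[
|\veps(x,t)-\veps(x_0,t)|<\eta\qquad\text{for all }(x,t)\in U\times[0,1].
\]
Picking $t_x,t_{x_0}\in[0,1]$ where the respective minima are attained, one gets
\[
\veps^*(x)\leq\veps(x,t_{x_0})<\veps(x_0,t_{x_0})+\eta=\veps^*(x_0)+\eta,
\]
and symmetrically $\veps^*(x_0)<\veps^*(x)+\eta$, so $|\veps^*(x)-\veps^*(x_0)|<\eta$. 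The main (mild) obstacle is precisely this uniform-on-$[0,1]$ estimate; everything else is formal. Since neither semialgebraicity nor compactness of $[0,1]$ is lost in the process, $\veps^*$ is the required strictly positive continuous semialgebraic function on $\Ss$.
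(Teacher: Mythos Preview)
Your proof is correct and follows exactly the same idea as the paper: both define $\veps^*(x):=\min_{t\in[0,1]}\veps(x,t)$ and verify it is strictly positive, continuous, and semialgebraic. The only difference is that the paper invokes \cite[Const.3.1]{fg} (using that the projection $\Ss\times[0,1]\to\Ss$ is open, proper and surjective) to get continuity and semialgebraicity in one stroke, whereas you spell these out directly via Tarski--Seidenberg and a tube-lemma argument.
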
 
\begin{proof}
The projection $\pi:\Ss\times[0,1]\to\Ss,\ (x,t)\mapsto x$ is open (because it is a projection), proper (because the inverse image of a compact set $K\subset\Ss$ is $\pi^{-1}(K)=K\times[0,1]$, which is a compact set) and a surjective continuous semialgebraic map. By \cite[Const.3.1]{fg} the function
$$
\veps^*:\Ss\to\R,\ (x,t)\to\veps^*(x):=\min_{t\in [0,1]}\{\veps(x,t)\}.
$$
is a continuous semialgebraic function smaller than or equal to $\veps$ and still strictly positive, as required.
\end{proof}

\subsubsection{Construction of the trimmed Whitney's semialgebraic topology}\label{ctwst}
Let $M\subset\R^m$ be a Nash manifold and let $\mu\geq0$ be an integer. If $\mu=0$, then ${\mathcal S}^0_\t(M\times[0,1],\R^n):={\mathcal S}^0(M\times[0,1],\R^n)$ endowed with the ${\mathcal S}^0$ topology. Assume next $\mu\geq1$ and let $\xi_1,\ldots,\xi_s$ be semialgebraic tangent fields on $M$ that span the tangent bundle of $M$. Define ${\mathcal S}^{\mu}_\t(M\times[0,1],\R^n)$ as the space of continuous semialgebraic maps $f:M\times[0,1]\to\R^n$ such that $\xi_{i_1}\cdots\xi_{i_\ell}(f):M\times[0,1]\to\R^n$ is a continuous semialgebraic map for each $1\le i_1,\dots,i_\ell\le s$ and each $1\le\ell\le\mu$

We equip ${\mathcal S}^{\mu}_\t(M\times[0,1],\R^n)$ with the ${\mathcal S}^{\mu}_{\t}$ \textit{trimmed (strong) Whitney's semialgebraic topology (${\mathcal S}^{\mu}_{\t}$ topology} in short) that we define as follows. Let $\veps:M\times[0,1]\to\R$ be a strictly positive continuous semialgebraic function. By Lemma \ref{st} we may assume that $\veps$ does not depend on the variable $t\in[0,1]$, that is, $\veps:M\to\R$ is a strictly positive continuous semialgebraic function. Denote by ${\mathcal V}_{\veps}$ the set of all functions $g\in{\mathcal S}^{\mu}_\t(M\times[0,1],\R^n)$ such that
\begin{equation}\label{topologyt}
\begin{cases}
\|g\|<\veps&\text{if $\mu=0$,}\\
\|g\|<\veps\quad\text{and}\quad\|\xi_{i_1}\cdots\xi_{i_\ell}(g)\|<\veps\, 
 \text{ for }\, 1\le i_1,\dots,i_\ell\le s,\, 1\le\ell\le \mu&\text{if $\mu\geq1$}.
\end{cases}
\end{equation}
The sets ${\mathcal V}_\veps$ form a basis of open neighborhoods of the zero function for the ${\mathcal S}^{\mu}_{\t}$ topology in ${\mathcal S}^{\mu}_\t(M\times[0,1],\R^n)$. 

Let $\Ss\subset\R^m$ and $\Tt\subset\R^n$ be semialgebraic sets such that $\Ss$ is locally compact. A continuous semialgebraic map $f:\Ss\times[0,1]\to\Tt$ is an {\em ${\mathcal S}^\mu_\t$ map} if there exists an open semialgebraic neighborhood $U\subset\R^m$ of $\Ss$ in which $\Ss$ is closed and an ${\mathcal S}^\mu_\t$ extension $F_0:U\times[0,1]\to\R^n$ of $f$ (see Remark \ref{rtwst}(ii) below). Define $\delta^\bullet\Ss:=\cl(\Ss)\setminus\Ss$, which is a closed semialgebraic set, because $\Ss$ is locally compact. We have $U\subset\R^n\setminus\delta^\bullet\Ss$, because $\R^n\setminus\delta^\bullet\Ss$ is the largest open semialgebraic neighborhood of $\Ss$ in $\R^n$ that contains $\Ss$ as a closed subset. Let $\sigma:\R^m\setminus\delta^\bullet\Ss\to[0,1]$ be an ${\mathcal S}^\mu$ bump function such that $\sigma|_\Ss=1$ and $\sigma|_{(\R^m\setminus\delta^\bullet\Ss)\setminus U}=0$. Define $F:=(\sigma\times\id_{[0,1]}) F_0:(\R^m\setminus\delta^\bullet\Ss)\times[0,1]\to\R^n$, which is an ${\mathcal S}^\mu_\t$ extension of $f$. Denote ${\mathcal S}^{\mu}_\t(\Ss\times[0,1],\Tt)$ the space of ${\mathcal S}^\mu_\t$ maps between $\Ss\times[0,1]$ and $\Tt$. We consider the quotient topology in ${\mathcal S}^\mu_\t(\Ss\times[0,1],\R^n)$ induced by the canonical surjective restriction map ${\mathcal S}^\mu_\t((\R^n\setminus\delta^\bullet\Ss)\times[0,1],\R^n)\to{\mathcal S}^\mu_\t(\Ss\times[0,1],\R^n),\ F\mapsto F|_{\Ss\times[0,1]}$ and the ${\mathcal S}^\mu_\t$ topology of ${\mathcal S}^\mu_\t((\R^n\setminus\delta^\bullet\Ss)\times[0,1],\R^n)$. We endow ${\mathcal S}^{\mu}_\t(\Ss\times[0,1],\Tt)$ with the subspace topology induced by the canonical inclusion ${\mathcal S}^\mu_\t(\Ss\times[0,1],\Tt)\hookrightarrow{\mathcal S}^\mu_\t(\Ss\times[0,1],\R^n)$.

If $M\subset\R^m$ is a Nash manifold that contains $\Ss$ as a closed subset, the reader can check that the (quotient) ${\mathcal S}^\mu_\t$ topology of ${\mathcal S}^\mu_\t(\Ss\times[0,1],\R^n)$ induced by the canonical surjective restriction map ${\mathcal S}^\mu_\t(M\times[0,1],\R^n)\to{\mathcal S}^\mu_\t(\Ss\times[0,1],\R^n),\ F\mapsto F|_{\Ss\times[0,1]}$ and the ${\mathcal S}^\mu_\t$ topology of ${\mathcal S}^\mu_\t(\Ss\times[0,1],\R^n)$ is the same as before.

\begin{remarks}\label{rtwst}
(i) Due to the nature of the ${\mathcal S}^\mu_\t$ topology, the reader can check that the space ${\mathcal S}^\mu_\t(\Ss\times[0,1],\Tt)$ endowed with this topology satisfies the analogous results to the space ${\mathcal S}^\mu(\Ss\times[0,1],\Tt)$ endowed with the ${\mathcal S}^\mu$ topology. As it is a quite straightforward, we do not include further details, but we will use them quite freely (only) in the proof of Theorem \ref{homotopy2}.

(ii) Let $M\subset\R^m$ be a Nash manifold and let $\Ss\subset M$ be a closed semialgebraic subset. Let $\Omega\subset M\times\R$ be an open neighborhood of $\Ss\times[0,1]$. We claim: {\em There exists an open semialgebraic neighborhood $U\subset M$ of $\Ss$ such that $U\times[0,1]\subset\Omega$.}

Define $U:=\{x\in M:\ \{x\}\times[0,1]\subset\Omega\}$, which contains $\Ss$. Pick a point $x\in U$. Then $\{x\}\times[0,1]\subset\Omega$, so the compact set $\{x\}\times[0,1]$ does not meet the closed subset $M\setminus\Omega$ of $M$. Let $\delta:=\frac{1}{2}\dist(\{x\}\times[0,1],M\setminus\Omega)$. Then $\Bb_m(x,\delta)\times[0,1]$ does not meet $M\setminus\Omega$, so $\Bb_m(x,\delta)\times[0,1]\subset\Omega$ and $\Bb_m(x,\delta)\subset U$. Thus, $U$ is an open semialgebraic neighborhood of $\Ss$ in $M$, as claimed.
\hfill$\sqbullet$
\end{remarks}

\section{Nash manifolds with corners}\label{s3}

In this section we present some concepts and results related to Nash manifolds with corners.

\subsection{Generalities}\label{angoliPre}
Let $\Ss\subset\R^n$ be a semialgebraic set. A point $x\in\Ss$ is \em smooth \em if there exists an open semialgebraic neighborhood $U\subset\R^n$ of $x$ such that $U\cap\Ss$ is a Nash manifold. The set $\Sth(\Ss)$ of smooth points of $\Ss$ is by \cite{st} a semialgebraic subset of $\R^n$ (and consequently a union of Nash submanifolds of $\R^n$ possibly of different dimension). 

Let $\Qq\subset\R^n$ be a Nash manifold with corners. The set of {\em internal points $\Int(\Qq)$ of} $\Qq$ is $\Int(\Qq):=\Sth(\Qq)$ and the {\em boundary $\partial\Qq$} of $\Qq$ is $\partial\Qq:=\Qq\setminus\Int(\Qq)$. Observe that the set of internal points $\Int(\Qq)$ and the boundary $\partial\Qq$ of $\Qq$ coincide with the set of internal points and boundary of $\Qq$ seen as a topological manifold with boundary. 

A Nash manifold with corners $\Qq\subset\R^n$ is locally compact, consequently: {\em $\Qq$ is a closed Nash submanifold with corners of the open semialgebraic set $\R^n\setminus(\cl(\Qq)\setminus\Qq)$.} In \cite{fgr} it is shown that $\Qq$ is a closed subset of an (affine) Nash manifold of its same dimension. Recall that a Nash subset $Y$ of a Nash manifold $M\subset\R^n$ {\em has only Nash normal-crossings in $M$} if for each point $y\in Y$ there exists an open semialgebraic neighborhood $U\subset M$ such that $Y\cap U$ is a {\em Nash normal-crossings divisor} of $U$, that is, a finite union of transversal Nash hypersurfaces of $U$.

\begin{thm}[{\cite[Thm.1.11]{fgr}}]\label{corners0}
Let $\Qq\subset\R^n$ be a $d$-dimensional Nash manifold with corners. There exists a $d$-dimensional Nash manifold $M\subset\R^n$ that contains $\Qq$ as a closed subset and satisfies:
\begin{itemize}
\item[\rm{(i)}] The Nash closure $Y$ of $\partial\Qq$ in $M$ has only Nash normal-crossings in $M$ and $\Qq\cap Y=\partial\Qq$.
\item[\rm{(ii)}] For every $x\in\partial\Qq$ the smallest analytic germ that contains the germ $\partial\Qq_x$ is $Y_x$.
\item[\rm{(iii)}] $M$ can be covered by finitely many open semialgebraic subsets $U_i$ (for $i=1,\ldots,r$) equipped with Nash diffeomorphisms $u_i:=(u_{i1},\dots,u_{id}):U_i\to\R^d$ such that:
$$\hspace{-3mm}
\begin{cases}
\text{$U_i\subset\Int(\Qq)$ or $U_i\cap\Qq=\varnothing$},&\text{\!if $U_i$ does not meet $\partial\Qq$,}\\ 
U_i\cap\Qq=\{u_{i1}\ge0,\dots,u_{ik_i}\ge0\},&\text{\!if $U_i$ meets $\partial\Qq$ (for a suitable $k_i\ge1$).} 
\end{cases}
$$
\end{itemize}
\end{thm}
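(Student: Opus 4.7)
My plan is to build $M$ by extending $\Qq$ locally across each boundary face and then gluing the extensions via a finite open semialgebraic cover, so that properties (i)--(iii) reduce to a coordinate computation in each chart.

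First I would produce local Nash corner charts. Pick $x\in\Qq$. Since $\Qq$ is a smooth manifold with corners there is a $\Cont^{\infty}$ chart at $x$ in which $\Qq$ is straightened to a product of half-spaces; moreover the faces of $\Qq$ through $x$ are semialgebraic smooth hypersurfaces of $\Qq$, and the Nash closure of each such face near $x$ admits a Nash defining equation (each face is semialgebraic, smooth and of pure codimension one, so its local ideal in $\Nn(U)$ is principal). Calling these Nash defining functions $u_1^x,\dots,u_{k_x}^x$, one checks that their differentials at $x$ are independent (this uses that $\Qq$ is \emph{smoothly} a manifold with corners, so the faces are pairwise transverse), hence by the Nash inverse function theorem \cite[Prop.8.1.8, Cor.8.4.6]{bcr} one can complete them with Nash functions $u_{k_x+1}^x,\dots,u_d^x$ to a Nash diffeomorphism $u^x:U^x\to\R^d$ on an open semialgebraic neighborhood $U^x\subset\R^n\setminus\delta^\bullet\Qq$ of $x$ satisfying $U^x\cap\Qq=\{u_1^x\ge0,\dots,u_{k_x}^x\ge0\}$. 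For $x\in\Int(\Qq)$ or $x\in(\R^n\setminus\delta^\bullet\Qq)\setminus\Qq$ the existence of analogous Nash charts is simpler, since $\Qq$ (resp.\ its complement) is already a Nash manifold there.

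Second, I would globalize. The $U^x$ form an open semialgebraic cover of $\Qq$ inside $\R^n\setminus\delta^\bullet\Qq$, and by the finiteness of the semialgebraic stratification of $\Qq$ (together with local compactness of $\Qq$) a finite subcover $\{U_i\}_{i=1}^r$ with associated Nash diffeomorphisms $u_i$ can be extracted. Set $M:=\bigcup_{i=1}^r U_i\subset\R^n$. Transition maps between the $u_i$ on overlaps are automatically Nash, so $M$ is a $d$-dimensional Nash submanifold of $\R^n$. Moreover $\Qq$ is closed in $M$ because on each $U_i$ the set $U_i\cap\Qq$ is defined by the closed Nash inequalities $u_{i,1}\ge0,\dots,u_{i,k_i}\ge0$, giving (iii).

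Third, I would verify (i) and (ii). Define $Y_i:=\{u_{i,1}\cdots u_{i,k_i}=0\}\subset U_i$ (and $Y_i:=\varnothing$ when $U_i$ does not meet $\partial\Qq$). These glue to a Nash subset $Y\subset M$ with only Nash normal-crossings, and by construction $\Qq\cap Y=\partial\Qq$. To see that $Y$ is the Nash closure of $\partial\Qq$, note that each irreducible Nash component $\{u_{i,j}=0\}$ meets $\partial\Qq$ in a subset of pure dimension $d-1$ (an open piece of a face), so every Nash subset of $M$ containing $\partial\Qq$ contains $\{u_{i,j}=0\}$. Property (ii) is local: in the Nash chart $u_i$ the germ $\partial\Qq_x$ equals a finite union of coordinate-half-hyperplane germs, its Nash Zariski closure is $Y_x$, and by faithful flatness of the analytic local ring over the Nash local ring this coincides with the smallest analytic germ containing $\partial\Qq_x$.

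\emph{Main obstacle.} The critical step is the first one: upgrading the $\Cont^{\infty}$ corner chart to a Nash one. The difficulty is to provide Nash defining equations for all faces through $x$ \emph{simultaneously}, and then to check that they form part of a Nash coordinate system at $x$. The inputs are that each face is semialgebraic and smooth, hence locally a principal Nash hypersurface germ, and that the faces are pairwise transverse (an input encoded in the definition of Nash manifold with corners). Once this is in place, the gluing in Step 2 and the coordinate verification in Step 3 are routine.
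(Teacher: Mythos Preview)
The paper does not prove this statement: it is quoted from \cite[Thm.1.11]{fgr} and used as a black box, so there is no in-paper argument to compare against. That said, your sketch has a genuine circularity in Step~1. You claim each face through $x$ is ``locally a principal Nash hypersurface germ'' with principal ideal in $\Nn(U)$, and then complete the defining functions to a Nash diffeomorphism $u^x:U^x\to\R^d$ on an open semialgebraic neighborhood $U^x\subset\R^n\setminus\delta^\bullet\Qq$. But a face has dimension $d-1$, so its ideal in the Nash ring of an open subset of $\R^n$ has height $n-d+1$, not $1$; and a Nash diffeomorphism $U^x\to\R^d$ forces $U^x$ to already be a $d$-dimensional Nash submanifold of $\R^n$. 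Producing such a submanifold through a boundary point $x$ is exactly what the theorem asserts: the only piece of $\Qq$ that is \emph{a priori} a Nash manifold is $\Int(\Qq)$, and extending its Nash structure across $\partial\Qq$ is the substantive step you have skipped. As written, your argument is only correct when $d=n$. One route out is to show first that the $d$-plane field $x\mapsto T_x\Qq$ along $\Qq$ is semialgebraic (hence Nash), or equivalently that the Zariski closure of $\Int(\Qq)$ is nonsingular along $\Qq$; only then do the faces become Nash hypersurfaces of a well-defined ambient Nash $d$-manifold and your equations $u^x_1,\dots,u^x_{k_x}$ acquire meaning.

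There are secondary issues as well: in Step~2 local compactness alone does not yield a finite subcover of a non-compact $\Qq$ (you need a genuine semialgebraic finiteness argument, not just the existence of a finite stratification), and in Step~3 the local pieces $Y_i$ need not glue to a global Nash subset of $M$ without a compatibility check on overlaps, since Nash closure does not in general commute with restriction to open semialgebraic subsets.
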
 

The Nash manifold $M$ is called a {\em Nash envelope of $\Qq$}. In general, it is not guaranteed that the Nash closure $Y$ of $\partial\Qq$ in $M$ is a Nash normal-crossings divisor of $M$ (that is, a finite union of transversal Nash hypersurfaces of $M$), as shown in the following example.

\begin{example}
The semialgebraic set $\Qq:=\{\x\geq 0,\y^2\leq\x^2-\x^4\}\subset\R^2$ is a Nash manifold with corners (see Figure \ref{teardrop}). Given any open semialgebraic neighborhood $U$ of $\Ss$ in $\R^2$ the Nash closure of $\partial\Qq$ in $U$ is not a Nash normal-crossings divisor of $U$.\hfill$\sqbullet$

\begin{figure}[!ht]
\begin{center}
\begin{tikzpicture}[scale=2.5]
\draw[thick,->] (-0.5, 0) -- (1.3, 0) ;
\draw[thick,->] (0, -0.8) -- (0, 0.8) ;
\draw [domain=0:1, draw=black, fill=gray!80, fill opacity=0.3,smooth,samples=1000]
(0, 0)
-- plot ({\x}, {sqrt(\x*\x-\x*\x*\x*\x)})
-- (1, 0);
\draw [domain=0:1, draw=black, fill=gray!80, fill opacity=0.3,samples=1000]
(0, 0)
-- plot ({\x},{ -sqrt(\x*\x-\x*\x*\x*\x)})
-- (1, 0);
\end{tikzpicture}
\end{center}
\caption{\small{The Nash manifold with corners $\Qq:=\{\x\geq 0,\y^2\leq\x^2-\x^4\}$.}}
\label{teardrop}
\end{figure}
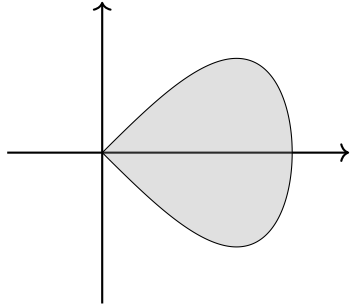
\end{example} 

We define now Nash manifolds with divisorial corners. 

\begin{defn}\label{nmwc}
A Nash manifold with corners $\Qq\subset\R^n$ is a Nash manifold with {\em divisorial corners} if there exists a Nash envelope $M\subset\R^n$ such that the Nash closure of $\partial\Qq$ in $M$ is a Nash normal-crossings divisor.\hfill$\sqbullet$
\end{defn}

A {\em facet of} a Nash manifold with corners $\Qq\subset\R^n$ is the (topological) closure in $\Qq$ of a connected component of $\Sth(\partial\Qq)$. As $\partial\Qq=\Qq\setminus\Sth(\Qq)$ is semialgebraic, the facets are semialgebraic and finitely many. The non-empty intersections of facets of $\Qq$ are the {\em faces of} $\Qq$. In \cite{fgr} it is shown the following characterization for Nash manifolds with divisorial corners:

\begin{thm}[{\cite[Thm.1.12, Cor.6.5]{fgr}}]\label{divisorialPre}
Let $\Qq\subset\R^n$ be a $d$-dimensional Nash manifold with corners. The following assertions are equivalent:
\begin{itemize}
\item[\rm{(i)}] There exists a Nash envelope $M\subset\R^n$ where the Nash closure of $\partial\Qq$ is a Nash normal-crossings divisor.
\item[\rm{(ii)}] Every facet $\Ff$ of $\Qq$ is contained in a Nash manifold $X\subset\R^n$ of dimension $d-1$.
\item[\rm{(iii)}] The number of facets of $\Qq$ that contain every given point $x\in\partial\Qq$ coincides with the number of connected components of the germ $\Sth(\partial\Qq)_x$.
\item[\rm{(iv)}] All the facets of $\Qq$ are Nash manifold with divisorial corners. 
\end{itemize}
If that is the case, the Nash manifold $M$ in {\rm(i)} can be chosen such that the Nash closure in $M$ of every facet $\Ff$ of $\Qq$ meets $\Qq$ exactly along $\Ff$.
\end{thm}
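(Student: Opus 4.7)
The plan is to prove the cycle (i) $\Rightarrow$ (ii) $\Rightarrow$ (iii) $\Rightarrow$ (i), and separately establish the equivalence with (iv).

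For (i) $\Rightarrow$ (ii), let $M$ be a Nash envelope in which the Nash closure $Y$ of $\partial\Qq$ decomposes as a transversal union of Nash hypersurfaces $X_1\cup\cdots\cup X_r$. Each connected component of $\Sth(\partial\Qq)$ is locally contained in a single irreducible component of $Y$, hence in a single $X_i$; taking topological closure in $\Qq$, each facet of $\Qq$ lies in some $X_i$, which is a $(d-1)$-dimensional Nash manifold, giving (ii). For (ii) $\Rightarrow$ (iii), I would fix $x\in\partial\Qq$ and a Nash corner chart $u:U\to\R^d$ with $U\cap\Qq=\{u_1\ge0,\dots,u_k\ge0\}$ and $u(x)=0$. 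The germ $\Sth(\partial\Qq)_x$ decomposes into exactly $k$ connected components, namely the germs of the walls $W_i=\{u_i=0,\,u_j>0\text{ for }j\ne i\}$, whose tangent hyperplanes at $x$ are the $k$ mutually transversal hyperplanes $\{du_i=0\}$. Each $W_i$ is contained in a single facet $\Ff_i$; if $\Ff_i=\Ff_j$ for some $i\ne j$, then by (ii) this common facet would be contained in a $(d-1)$-dimensional Nash manifold $X$, forcing $T_xX$ to contain two distinct hyperplanes of $T_xM$, a contradiction. Hence there are exactly $k$ facets passing through $x$, which is precisely (iii).

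For (iii) $\Rightarrow$ (i), I would start with the envelope $M$ given by Theorem \ref{corners0}, so that the Nash closure $Y$ of $\partial\Qq$ in $M$ already has only Nash normal-crossings. Denote by $Z_j$ the Nash closure in $M$ of the $j$-th facet $\Ff_j$. The local chart description combined with (iii) ensures that at each $x\in\Ff_j$ the germ $(Z_j)_x$ coincides with one of the local wall hyperplane germs, so $Z_j$ is a pure $(d-1)$-dimensional Nash hypersurface of $M$; moreover, at $x$ distinct $Z_j$'s correspond to distinct local walls and therefore meet transversally. Hence, after possibly shrinking $M$, the union $\bigcup_j Z_j$ is a Nash normal-crossings divisor of $M$ coinciding with the Nash closure of $\partial\Qq$, which yields (i).

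For the equivalence with (iv), note that (iv) $\Rightarrow$ (ii) is immediate: each facet $\Ff$ is itself a Nash manifold with corners of dimension $d-1$, and Theorem \ref{corners0} provides it with a Nash envelope of dimension $d-1$ in which it sits as a closed subset. Conversely, given (i), the intersection of the envelope $M$ with an irreducible component $X_i$ of $Y$ provides a Nash envelope of the facet $\Ff_i$ inside $X_i$, in which the traces $X_j\cap X_i$ (for $j\ne i$) form by transversality a Nash normal-crossings divisor; an induction on $d$ then gives that each facet is divisorial, establishing (iv). The last assertion of the theorem follows by refining the $Z_j$'s so that no extra Nash branches outside $\Qq$ are added to their closures.

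The main obstacle I anticipate is the separation step in (iii) $\Rightarrow$ (i): proving that the Nash closure of each individual facet is a pure $(d-1)$-dimensional Nash hypersurface in $M$, rather than a Nash subset of mixed dimension potentially containing other facets, and that their global union realises the Nash closure of $\partial\Qq$ as a Nash normal-crossings divisor. A careful tracking of the interplay between the local corner normal form and the Nash-closure operation, together with possibly a further shrinking of $M$ near points where several facets meet, will be required.
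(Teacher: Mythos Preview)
The paper does not contain a proof of this theorem: it is quoted verbatim from \cite[Thm.1.12, Cor.6.5]{fgr} and used as a black box, so there is no ``paper's own proof'' to compare your proposal against. Your outline is a plausible reconstruction of how such a result might be proved, and the implications (i)$\Rightarrow$(ii) and (ii)$\Rightarrow$(iii) are essentially correct as written.

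That said, there is a real gap in your (iii)$\Rightarrow$(i), which you yourself flag. The difficulty is not only that the Nash closure $Z_j$ of a facet $\Ff_j$ might a priori be of mixed dimension; the more serious point is that even after you know each $Z_j$ is a hypersurface locally near $\Ff_j$, you still have to produce a \emph{global} Nash envelope $M$ in which the $Z_j$ are \emph{non-singular} Nash hypersurfaces meeting transversally everywhere, not just along $\partial\Qq$. Shrinking $M$ near $\partial\Qq$ is delicate because the $Z_j$ can interact away from $\Qq$, and the Nash closure operation is global on $M$: shrinking $M$ changes the Nash closure. The actual proof in \cite{fgr} handles this via finiteness properties of Nash sets and a careful choice of Nash equations for the facets, rather than by the local-to-global patching you suggest. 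Your sketch of (i)$\Rightarrow$(iv) also hides work: the trace $X_j\cap X_i$ need not be connected or irreducible, so identifying it with the boundary structure of the facet $\Ff_i$ requires more than transversality.
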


Properties (ii), (iii) and (iv) are intrinsic properties of $\Qq$ and do not depend on the Nash envelope $M$. The faces of a Nash manifolds with divisorial corners are again Nash manifolds with divisorial corners. If a Nash envelope $M\subset\R^n$ of $\Qq$ satisfies the equivalent conditions of Theorem \ref{divisorialPre}, every open semialgebraic neighborhood $M'$ of $\Qq$ in $M$ is still a Nash envelope of $\Qq$ that satisfies such conditions. For the rest of this article (and also for the statements in the Introduction) we make the following assumption:

\begin{assumption}\label{nmwc0}
A Nash manifold with corners means a Nash manifold with divisorial corners.
\end{assumption}

We next recall a result from \cite[Thm.1.9]{cf2} that proves that Nash manifolds with corners admit Nash compactifications the are compact Nash manifolds with corners.

\begin{thm}\label{compactclosure}
Let $\Qq\subset\R^n$ be a Nash manifold with corners. Then there exists a Nash embedding ${\tt j}:\Qq\hookrightarrow\R^p$ for some $p\geq1$ such that $\cl(\Qq)\subset\R^p$ is a compact Nash manifold with corners.
\end{thm}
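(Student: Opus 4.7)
The plan is to construct a bounded Nash embedding ${\tt j}:\Qq\hookrightarrow\R^p$ whose image acquires a compatible Nash manifold with corners structure at the boundary. First I would invoke Theorem \ref{corners0} together with Theorem \ref{divisorialPre} (under Assumption \ref{nmwc0}) to fix a Nash envelope $M\subset\R^n$ of $\Qq$ of dimension $d=\dim\Qq$ in which the Nash closure $Y$ of $\partial\Qq$ is a Nash normal-crossings divisor with $\Qq\cap Y=\partial\Qq$, and in which $\Qq$ is locally described on a finite covering $\{U_i\}$ of $M$ by Nash inequalities $u_{i1}\ge0,\dots,u_{ik_i}\ge0$ in suitable Nash coordinates.

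Next I would apply Proposition \ref{Mos} (Mostowski's trick) to realise $M$ as a closed Nash submanifold of an affine space $\R^N$, automatically turning $\Qq$ into a closed subset and $Y$ into a closed Nash normal-crossings divisor of $M$ in the new ambient space. Composing with a bounded Nash diffeomorphism $\phi\colon\R^N\to\Bb_N(0,1)$, for instance $\phi(x):=x/\sqrt{1+\|x\|^2}$, yields a Nash embedding ${\tt j}_0\colon\Qq\hookrightarrow\R^N$ whose image is contained in the open unit ball, so that $\cl({\tt j}_0(\Qq))$ is compact.

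The main obstacle is that $\cl({\tt j}_0(\Qq))$ need not yet be a Nash manifold with divisorial corners: the ``boundary at infinity'' $\cl({\tt j}_0(M))\setminus{\tt j}_0(M)$ can have uncontrolled structure and need not extend $Y$ to a global Nash normal-crossings divisor. I would handle this by enlarging the target to $\R^p$ with $p=N+r$, setting
\[
{\tt j}(x):=\bigl({\tt j}_0(x),\psi_1(x),\dots,\psi_r(x)\bigr),
\]
where the bounded Nash functions $\psi_j$ are chosen, using the local charts $(u_{i1},\dots,u_{id})$ of Theorem \ref{corners0}(iii) glued via Nash partitions of unity, so as to both separate the branches of $Y$ near their accumulation at the unit sphere and extend to Nash hypersurfaces whose union with the closure of ${\tt j}_0(Y)$ forms a Nash normal-crossings divisor in a neighborhood of $\cl({\tt j}_0(\Qq))$. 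Once such $\psi_j$ have been produced, Theorem \ref{divisorialPre}(iii) gives divisorial corners at every boundary point of $\cl({\tt j}(\Qq))$, including the points at infinity, so $\cl({\tt j}(\Qq))$ is a compact Nash manifold with divisorial corners. The delicate step, where tools from \cite{cf2} are decisive, is producing the $\psi_j$ within the Nash category so that the extension of the corner structure across the boundary at infinity is genuinely Nash rather than merely semialgebraic or analytic.
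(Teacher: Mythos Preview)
The paper does not prove Theorem \ref{compactclosure}: it is stated there only as a result recalled from \cite[Thm.1.9]{cf2}, with no argument given. So there is no in-paper proof to compare against; your proposal is being measured against a black box.

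Your outline correctly isolates the real difficulty: after a bounded Nash embedding of the envelope $M$, the points of $\cl({\tt j}_0(\Qq))\setminus{\tt j}_0(\Qq)$ need not inherit any corner structure, and the Nash closure of ${\tt j}_0(Y)$ need not remain a normal-crossings divisor there. Recognising that extra coordinates are needed to tame this ``boundary at infinity'' is the right instinct.

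However, the proposal has a genuine gap at the point where you write ``glued via Nash partitions of unity'': there are no Nash partitions of unity. Nash functions are analytic, so a Nash function with compact support is identically zero; the paper itself only ever uses ${\mathcal S}^\mu$ partitions of unity (see the proof of Theorem \ref{push}, {\sc Step 1}) and then has to pass through a separate Nash approximation step. This means your proposed construction of the $\psi_j$ as Nash functions obtained by gluing local data cannot work as stated, and you would need a genuinely different mechanism---typically global Nash equations of the irreducible components of $Y$ combined with careful control of their behaviour at infinity, which is non-trivial and is precisely the content you defer to \cite{cf2}. Since you yourself flag that ``tools from \cite{cf2} are decisive'' for this step, what you have written is an accurate roadmap of where the work lies rather than a proof.
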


\section{Pushing Nash manifolds with corners inside its interior}\label{s4}

In this section we prove Theorem \ref{push}. By Theorem \ref{compactclosure} it is enough to prove Theorem \ref{push} when $\Qq$ is a compact Nash manifold with corners. 

\begin{proof}[Proof of Theorem {\em\ref{push}} in the compact case]
Let $\Qq\subset\R^n$ be a $d$-dimensional compact Nash manifold with corners and let $M\subset\R^n$ be a Nash envelope of $\Qq$ such that the Nash closure $X$ of $\partial\Qq$ in $M$ is a Nash normal-crossings divisor satisfying $\Qq\cap X=\partial\Qq$ (Theorem \ref{divisorialPre}). Let $X_1,\ldots,X_s$ be the irreducible components of $X$ (which are non-singular Nash hypersurfaces of $M$). Let $h_i:M\to\R$ be a Nash equation of $X_i$ such that $d_ph_i:T_pM\to\R$ is surjective for each $p\in X_i$ and each $i=1,\ldots,s$ and $\Qq=\{h_1\geq0,\ldots,h_s\geq0\}$ (see \cite[Cor.2.12]{cf3}). Let $(\Ee,\eta)$ be a Nash tubular neighborhood of $M$ in $\R^n$ (use \cite[Cor.8.9.5]{bcr}). The proof is conducted in two steps. 

\noindent{\sc Step 1. Construction of a suitable tangent Nash vector field.} 
We show: {\em There exists a Nash map $W:M\to\R^n$ such that $W$ is a tangent vector field on $M$ and $d_ph_j(W_p)>0$ for each $p\in\partial\Qq\cap\{h_j=0\}$ and each $j=1,\ldots,s$.}

By Theorem \ref{corners0} $M$ can be covered by finitely many open semialgebraic subsets $U_i$ (for $i=1,\ldots,r$) equipped with Nash diffeomorphisms $u_i:=(u_{i1},\dots,u_{id}):U_i\to\R^d$ such that:
$$\hspace{-3mm}
\begin{cases}
\text{$U_i\subset\Int(\Qq)$ or $U_i\cap\Qq=\varnothing$},&\text{\!if $U_i$ does not meet $\partial\Qq$,}\\ 
U_i\cap\Qq=\{u_{i1}\ge0,\dots,u_{ik_i}\ge0\},&\text{\!if $U_i$ meets $\partial\Qq$ (for a suitable $k_i\ge1$).} 
\end{cases}
$$
Consider the tangent Nash vector field $V_i$ on $U_i$ given by $(d_pu_i)^{-1}(1,\ldots,1)$ for each $p\in U_1$. Observe that $d_p(u_{ij})(V_{i,p})=1>0$ for each $i=1,\ldots,r$ and each $j=1,\ldots,d$. Let $\{\theta_i\}_{i=1}^r$ be an ${\mathcal S}^1$ partition of unity subordinated to $\{U_i\}_{i=1}^r$. Define the tangent ${\mathcal S}^1$ vector field $V:=\sum_{i=1}^r\theta_iV_i$ and we claim: {\em $d_ph_j(V_p)>0$ for each $p\in\partial\Qq\cap\{h_j=0\}$ and each $j=1,\ldots,s$.}

Fix $j=1,\ldots,s$ and let $p\in\partial\Qq\cap\{h_j=0\}$. If $p\not\in U_i$, then $\theta_i(p)=0$, because ${\rm supp}(\theta_i)\subset U_i$. After reordering the indices, we may assume that there exists $\ell\geq 1$ such that $p\in U_1\cap\cdots\cap U_\ell$ and $p\not\in U_{\ell+1}\cup\cdots\cup U_r$. Then $V_p=\sum_{i=1}^\ell\theta_i(p)V_{i,p}$ and $d_ph_j(V_p)=\sum_{i=1}^\ell\theta_i(p)d_ph_j(V_{i,p})$. Fix $i=1,\ldots,\ell$. As $h_j$ vanishes identically at $X_j$, after reordering the indices if necessary, we may assume that $h_j|_{U_i}=u_{i1}w_{ij}$ for some strictly positive Nash function $w_{ij}$ on $U_i$ (recall that $\Qq=\{h_1\geq0,\ldots,h_s\geq0\}$ and $h_i:M\to\R$ is an equation of $X_i$ such that $d_ph_i:T_pM\to\R$ is surjective for each $p\in X_i$ and each $i=1,\ldots,s$). We have $d_ph_j|_{U_i}=w_{ij}(p)d_pu_{i1}+u_{i1}(p)d_pw_{ij}=w_{ij}(p)d_pu_{i1}$, because $\{h_j|_{U_i}=0\}=\{u_{i1}=0\}$ and $h_j(p)=0$. Thus, $d_ph_j(V_{i,p})=w_{ij}(p)d_pu_{i1}(V_{i,p})=w_{ij}(p)>0$. As $\theta_i(p)\geq0$ for $i=1,\ldots,\ell$ and $\sum_{i=1}^\ell\theta_i(p)=1$, we conclude that
$$
d_ph_j(V_p)=\sum_{i=1}^\ell\theta_i(p)d_ph_j(V_{i,p})=\sum_{i=1}^\ell\theta_i(p)w_{ij}(p)>0,
$$
as claimed.

Denote $TM:=\{(x,v)\in M\times\R^n:\ v\in T_xM\}$ the tangent vector bundle of $M$, which is a Nash submanifold of $\R^{2n}$. Denote the projection onto the first factor with $\pi:TM\to M$. By \cite[Thm.II.4.1]{sh} there exist Nash approximations $W^*:M\to TM,\ x\mapsto(\phi(x),W^\bullet_x)$ of $V^*:M\to TM, \, x\mapsto (x,V_x)$ arbitrarily close to $V^*$ in the ${\mathcal S}^1$ topology. As $\pi\circ V^*=\id_M$ is an ${\mathcal S}^1$ diffeomorphism, we deduce by Proposition \ref{leftcomp} and \cite[Lem.II.1.7]{sh} that if $W^*$ is close enough to $V^*$, also $\pi\circ W^*=\phi$ is an ${\mathcal S}^1$ diffeomorphism and $\phi^{-1}$ is close to $\id_M$. By Proposition \ref{leftcomp} the ${\mathcal S}^1$ map $W:M\to \R^n,\, x\mapsto W^\bullet_{\phi^{-1}(x)}$ is a tangent vector field to $M$ close to $V$ in the ${\mathcal S}^1$ topology. As $d_ph_j(V_p)$ is strictly positive for each $j=1,\ldots,s$ and each $p$ belonging to the compact semialgebraic set $\partial\Qq\cap\{h_j=0\}$, we deduce by Proposition \ref{leftcomp} that, if $W$ is close enough to $V$, also $d_ph_j(W_p)$ is strictly positive for each $p\in\partial\Qq\cap\{h_j=0\}$ and each $j=1,\ldots,s$.

\noindent{\sc Step 2. Construction of the Nash map.} 
Define the Nash map 
$$
\varphi_0:M\times\R\to\R^n,\ (x,t)\mapsto x+t W_x. 
$$
As $\varphi_0(\Qq\times\{0\})=\Qq\subset M\subset\Ee$, we have $\Qq\times\{0\}\subset\varphi_0^{-1}(\Ee)$. Consider the closed semialgebraic set $C:=\R^n\setminus\varphi_0^{-1}(M)$ and let $\delta_0:=\frac{1}{2}\dist(\Qq\times\{0\},C)$. For each $0<\delta\leq\delta_0$ define 
$$
M_\delta:=\{x\in M:\ \dist(x,\Qq)<\delta\}, 
$$
which is an open semialgebraic subset of $M$ that contains $\Qq$. We claim: {\em There exists $0<\delta\leq\delta_0$ such that $M_\delta\times[-\delta,\delta]\subset\varphi_0^{-1}(\Ee)$.} 

Fix $0<\delta\leq\delta_0$ and let us first check: $(M_\delta\times[-\delta,\delta])\cap C=\varnothing$. 

Otherwise, there exists $(x,t)\in(M_\delta\times[-\delta,\delta])\cap C$. Then $\dist(x,\Qq)<\delta$, so there exists $x_0\in\Qq$ such that $d(x,x_0)<\delta$ (recall here that $\Qq$ is compact). Thus, $\|(x,t)-(x_0,0)\|^2=d(x,x_0)^2+t^2<2\delta^2$, so $\|(x,t)-(x_0,0)\|<\sqrt{2}\delta<2\delta$. As $(x,t)\in C$ and $(x_0,0)\in\Qq\times\{0\}$, then
$$
2\delta_0=\dist(\Qq\times\{0\},C)\leq \|(x,t)-(x_0,0)\|<2\delta\leq2\delta_0,
$$
which is a contradiction, so $(M_\delta\times[-\delta,\delta])\cap C=\varnothing$. 

As $\Qq$ is compact, there exists $N>0$ such that $W_x\in\Bb_n(0,N)$ for each $x\in\Qq$. Let $\rho:M_{\delta_0}\times\Bb_n(0,N+\delta_0)\times[-\delta_0,\delta_0]\to\R^n,\ (x,y,t)\mapsto x+ty$. As $\rho(\Qq\times\ol{\Bb}_n(0,N)\times\{0\})=\Qq\subset M\subset\Ee$ and $\Qq$ is compact, there exists $0<\delta\leq\delta_0$ such that $\rho(M_\delta\times\Bb_n(0,N)\times[-\delta,\delta])\subset\Ee$. We have 
$$
\varphi_0(M_\delta\times[-\delta,\delta])\subset\rho(M_\delta\times\Bb_n(0,N)\times[-\delta,\delta])\subset\Ee,
$$
because $W_x\in\Bb(0,N)$ for each $x\in\Qq$. Consequently,
$$
M_\delta\times[-\delta,\delta]\subset\varphi_0^{-1}(\varphi_0(M_\delta\times[-\delta,\delta]))\subset\varphi_0^{-1}(\Ee),
$$
as claimed.

Fix $0<\delta\leq\delta_0$ satisfying that $M_\delta\times[-\delta,\delta]\subset\varphi_0^{-1}(\Ee)$ and define $M_0:=M_\delta$. Recall that $(\Ee,\eta)$ is a Nash tubular neighborhood of $M$ in $\R^n$ and for each $j=1,\ldots,s$ consider the Nash functions $h_j^*:=h_j\circ\eta:\Ee\to\R$ and 
$$
h^\bullet_j:=h_j^*\circ\rho:M_0\times\Bb_n(0,N)\times[-\delta,\delta]\to \R,\ (x,y,t)\mapsto h_j^*(x+ty).
$$ 
Let $\pi_1:\R^n\times\R^n\times\R\to\R^n,\ (x,y,t)\mapsto x$ be the projection onto the first factor and let $\pi_2:\R^n\times\R^n\times\R\to\R^n\times\R^n,\ (x,y,t)\mapsto(x,y)$ be the projection onto the first two factors.

As $(h^\bullet_j-h^*_j\circ\pi_1)(x,y,0)=h^*_j(x)-h^*_j(x)=0$ for each $(x,y)\in M_0\times\Bb_n(0,N)$, the Nash function $h^\bullet_j-h_j^*\circ\pi_1$ vanishes on the hyperplane $\{\t=0\}$ of $M_0\times\Bb_n(0,N)\times[-\delta,\delta]$. Thus, there exists a Nash function $H_j:M_0\times\Bb_n(0,N)\times[-\delta,\delta]\to\R$ such that $h^\bullet_j-h^*_j\circ\pi_1=\t H_j$. Consider the Nash function $H_{j0}:M_0\times\Bb_n(0,N)\to\R,\ (x,y)\mapsto H_j(x,y,0)$. As $(H_j-H_{0j}\circ\pi_2)(x,y,0)=H_{j0}(x,y)-H_{j0}(x,y)=0$ for each $(x,y)\in M_0\times\Bb_n(0,N)$, the Nash function $H_j-H_{j0}\circ\pi_2$ vanishes on the hyperplane $\{\t=0\}$ of $M_0\times\Bb_n(0,N)\times[-\delta,\delta]$. Thus, there exists a Nash function $G_j:M_0\times\Bb_n(x,N)\times[-\delta,\delta]\to\R$ such that $H_j-H_{j0}\circ\pi_2=\t G_j$. Consequently, 
\begin{multline}\label{taylor0}
h_j^*(x+ty)=h^\bullet_j(x,y,t)=(h_j^*\circ\pi_1)(x,y,t)+tH_{j}(x,y,t)\\
=h_j^*(x)+t((H_{j0}\circ\pi_2)(x,y,t)+tG_j(x,y,t))=h_j^*(x)+tH_{j0}(x,y)+t^2G_j(x,y,t)
\end{multline}
for each $(x,y,t)\in M_0\times\Bb_n(0,N)\times[-\delta,\delta]$. 

Consider the restriction $\varphi:=\varphi_0|_{M_0\times[-\delta,\delta]}:M_0\times[-\delta,\delta]\to\Ee,\ (x,t)\mapsto x+t W_x$ and observe that $\eta\circ\varphi:M_\delta\times[-\delta,\delta]\to M,\ (x,t)\mapsto\eta(x+tW_x)$. We deduce by \eqref{taylor0}
\begin{equation}\label{ProvaTaylor}
(h_j^*\circ\varphi)(x,t)=h_j^*(x+tW_x)=h_j^*(x)+tH_{j0}(x,W_x)+t^2G_j(x,W_x,t)
\end{equation}
for each $(x,y,t)\in M_0\times\Bb_n(0,N)\times[-\delta,\delta]$. Thus,
$$
H_{j0}(x,W_x)=\frac{\partial(h_j^*\circ\varphi)}{\partial\t}(x,0)=d_xh_j^*(W_x)=d_xh_j(W_x)
$$
for each $x\in M_0$, because $W_x$ is tangent to $M_0\subset M$ at $x$ and $h^*_j|_{M_0}=(h_j\circ\eta)|_{M_0}=h_j|_{M_0}$. As $h_j^*(x)=h_j(x)$ for each $x\in M_0$, we have by \eqref{ProvaTaylor}
\begin{equation}\label{Taylor2}
(h_j^*\circ\varphi)(x,t)=h_j(x)+td_xh_j(W_x)+t^2G_j(x,W_x,t)
\end{equation}
for each $(x,t)\in M_0\times[-\delta,\delta]$.

Fix $j=1,\ldots,s$. We claim: {\em For each $y\in\Qq$ there exist an open semialgebraic neighborhood $U^y\subset M_0$ of $y$ and $0<\veps_y<\delta$ such that $(h_j^*\circ\varphi)(x,t)>0$ for each $(x,t)\in(U^y\cap\Qq)\times(0,\veps_y]$.}

If $y\in\Qq\setminus\{h_j=0\}$, then $h_j(y)>0$. As $M_0$ is locally compact, there exists an open semialgebraic neighborhood $U^y\subset M_0$ of $y$ such that $\cl(U^y)$ is compact and $h_j|_{\cl(U^y)}>0$. Denote 
\begin{align*}
n_{0,y}&:=\min\{h_j(x):\ x\in\cl(U^y)\}>0,\\ 
N_{1,y}&:=\max\{|d_xh_j(W_x)|:\ x\in\cl(U^y)\},\\
N_{2,y}&:=\max\{|G_{j2}(x,t)|:\ (x,t)\in\cl(U^y)\times[0,\delta]\}.
\end{align*}
By \eqref{Taylor2} we have
$$
(h_j^*\circ\varphi)(x,t)=h_j(x)+td_xh_j(W_x)+t^2G_{j2}(x,t)\geq n_{0,y}-tN_{1,y}-t^2N_{2,y}
$$
and there exists $0<\veps_y<\delta$ such that if $t\in(0,\veps_y]$, then $n_{0,y}-tN_{1,y}-t^2N_{2,y}>0$. Thus, $(h_j^*\circ\varphi)(x,t)>0$ for each $(x,t)\in U^y\times(0,\veps_y]$.

If $y\in\Qq\cap\{h_j=0\}$, then $h_j(y)=0$ and $d_yh_j(W_y)>0$. Let $U^y\subset M_0$ be an open semialgebraic neighborhood of $y$ such that $\cl(U^y)$ is compact and $d_xh_j(W_x)>0$ for each $x\in\cl(U^y)$. Denote 
\begin{align*}
n_{1,y}&:=\min\{d_xh_j(W_x):\ x\in\cl(U^y)\}>0\\
N_{2,y}&:=\max\{|G_{j2}(x,t)|:\ (x,t)\in\cl(U^y)\times[0,1]\}.
\end{align*}
By \eqref{Taylor2} we deduce
$$
(h_j^*\circ\varphi)(x,t)=h_j(x)+td_xh_j(W_x)+t^2G_{j2}(x,t)\geq t(n_{1,y}-tN_{2,y})
$$
and there exists $0<\veps_y<\delta$ such that if $t\in(0,\veps_y]$, then $n_{1,y}-tN_{2,y}>0$. Thus, $(h_j^*\circ\varphi)(x,t)>0$ for each $(x,t)\in (U^y\times\Qq)\times(0,\veps_y]$, as claimed.

As $\Qq$ is compact, there exists $y_1,\ldots,y_\ell\in\Qq$ such $\Qq\subset U^{y_1}\cup\cdots\cup U^{y^\ell}$. Define $\veps_j:=\min\{\veps_{y_1},\ldots,\veps_{y_\ell}\}<\delta$. We have: {\em $(h_j^*\circ\varphi)(x,t)>0$ for each $(x,t)\in\Qq\times(0,\veps_j]$}. As the previous property holds for $j=1,\ldots,s$, we define $0<\veps:=\min\{\veps_1,\ldots,\veps_s\}<\delta$. We have $(h_j\circ\eta\circ\varphi)(x,t)=(h_j^*\circ\varphi)(x,t)>0$ for each $(x,t)\in\Qq\times(0,\veps]$ and each $j=1,\ldots,s$. We conclude $(\eta\circ\varphi)(x,t)\in\Int(\Qq)=\{h_1>0,\ldots,h_s>0\}$ for each $(x,t)\in\Qq\times(0,\veps]$. In addition, $\eta\circ\varphi(x,0)=x$ for each $x\in M_0$. Define the Nash map
$$
\Sigma:=\eta\circ\varphi:M_0\times[-1,1]\to M,\ (x,t)\mapsto(\eta\circ\varphi)(x,\veps t).
$$ 
The Nash map $\sigma:=\Sigma|_{\Qq\times[0,1]}:\Qq\times[0,1]\to\Qq$ satisfies $\sigma(\Qq\times(0,1])=\Int(\Qq)$ and $\sigma|_{\Qq\times\{0\}}=\id_\Qq$, as required.
\end{proof}

\section{Pushing Nash manifolds with corners with control of derivatives}\label{s5}

In this section we improve Theorem \ref{push} showing that it is possible to push a Nash manifold with corners $\Qq\subset\R^n$ inside its interior $\Int(\Qq)$ using a one parameter family of Nash diffeomorphisms (onto their images) arbitrarily close to the identity map with respect to the ${\mathcal S}^{\mu}$ topology for each integer $\mu\geq 0$. 

\subsection{Strictly positive Nash functions close to zero}

We show that there exist strictly positive Nash functions $\delta:\Qq\to(0,1)$ arbitrarily close to zero with respect to the ${\mathcal S}^{\mu}$ topology.

\begin{lem}\label{zeroNash}
Let $M\subset\R^n$ be a Nash manifold and $U\subset M$ an open semialgebraic set. Then, for each integer $\mu\geq 0$ there exist strictly positive Nash functions $h:U\to(0,1)$ arbitrarily close to zero with respect to the ${\mathcal S}^{\mu}$ topology. 
\end{lem}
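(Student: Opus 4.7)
The plan is to construct $h$ explicitly as a rescaling of the Nash function $(1+\|y\|^2)^{-K}$ pulled back through a Mostowski embedding of $U$, and to use a Lojasiewicz-type inequality at infinity to dominate the prescribed error function $\veps$.

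First I would fix a strictly positive continuous semialgebraic function $\veps:U\to\R$ describing a basic $\Cont^\mu$-neighborhood $\Uu_\veps$ of the zero function (in the sense of \eqref{topology}) together with a family of Nash tangent fields $\xi_1,\dots,\xi_s$ spanning $TM$. As $U$ is open in the Nash manifold $M$, it is locally compact, so by Proposition \ref{Mos} there exists a Nash embedding $j:U\hookrightarrow\R^{n+1}$ with $j(U)$ closed semialgebraic in $\R^{n+1}$. Transporting $\veps$ via $j$ gives a strictly positive continuous semialgebraic function $\tilde\veps:j(U)\to\R$, and a standard Lojasiewicz-type inequality at infinity for semialgebraic functions on closed semialgebraic subsets of Euclidean space yields constants $c>0$ and $k\in\N$ with $\tilde\veps(y)\ge c(1+\|y\|^2)^{-k}$ for every $y\in j(U)$.

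Next I would take $\tilde h(y):=c'(1+\|y\|^2)^{-K}$ on $\R^{n+1}$, with $c'>0$ and $K\in\N$ still to be chosen, and set $h:=\tilde h\circ j:U\to(0,1)$. Then $h$ is a strictly positive Nash function on $U$, and $h<1$ as soon as $c'<1$. For $\mu=0$ it suffices to take $K\ge k$ and $c'<c$ to conclude $h<\veps$ pointwise. For $\mu\ge1$, the push-forward vector fields $\tilde\xi_i:=(dj)(\xi_i)\circ j^{-1}$ are Nash on the closed semialgebraic set $j(U)\subset\R^{n+1}$, so their components have at most polynomial growth at infinity; a straightforward induction on $\ell\le\mu$ via the chain rule shows that $\xi_{i_1}\cdots\xi_{i_\ell}(h)(x)$ can be written as $c'$ times a rational expression of type $P(j(x))(1+\|j(x)\|^2)^{-K-\ell}$, with $P$ a continuous semialgebraic function of polynomial growth. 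Hence one obtains global estimates
\begin{equation*}
|\xi_{i_1}\cdots\xi_{i_\ell}(h)(x)|\le c'\,C_\ell\,(1+\|j(x)\|^2)^{N_\ell-K}
\end{equation*}
for some constants $C_\ell,N_\ell$ depending only on $\mu$, the fields $\xi_i$, and the embedding $j$.

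Finally I would choose $K$ large enough so that $K-N_\ell\ge k$ for every $\ell\le\mu$, and then $c'$ small enough so that $c'C_\ell<c$ for every $\ell\le\mu$ and $c'<1$; this delivers $|\xi_{i_1}\cdots\xi_{i_\ell}(h)(x)|<c(1+\|j(x)\|^2)^{-k}\le\tilde\veps(j(x))=\veps(x)$ for every $x\in U$ and every multi-index of length at most $\mu$, which places $h$ inside $\Uu_\veps$. The main obstacle is precisely this derivative estimate: controlling iterated derivatives of $\tilde h$ along Nash vector fields whose growth at infinity in $\R^{n+1}$ must be balanced against the decay exponent $K$. Once the growth/decay trade-off is made explicit through the Lojasiewicz exponent $k$, the remaining arithmetic of choosing $K$ large and $c'$ small is routine.
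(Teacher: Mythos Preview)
Your approach is correct and genuinely different from the paper's. One small inaccuracy: the constants $C_\ell$ in your estimate $|\xi_{i_1}\cdots\xi_{i_\ell}(h)|\le c'\,C_\ell\,(1+\|j(x)\|^2)^{N_\ell-K}$ do depend on $K$, because each differentiation of $(1+\|y\|^2)^{-K}$ brings down a factor of order $K$, so $C_\ell$ grows like $K^\ell\le K^\mu$. This does not break the argument, since you fix $K$ (to beat the polynomial-growth exponents $N_\ell$ and the \L ojasiewicz exponent $k$) \emph{before} choosing $c'$; you then simply take $c'<c/\max_\ell C_\ell$, which is legitimate because at that point $K$ is a fixed integer. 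Just be explicit about this order of quantifiers.

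The paper takes a quite different route. It first proves the statement for $\mathcal S^\mu$ functions (Lemma~\ref{derlem4}): reduce via a tubular neighborhood and the Nash diffeomorphism $x\mapsto x/\sqrt{1+\|x\|^2}$ to the case where $\cl(U)$ is compact, take an $\mathcal S^\mu$ equation $g$ of the exterior boundary $\delta^\bullet U$ normalized so that $|g|<\tfrac12$, and then show that a sufficiently high even power $g^N$ is $\veps$-close to zero in $\mathcal S^\mu$; the derivative control comes from a combinatorial Leibniz estimate (Lemma~\ref{derlem3}) exploiting $|g|<1$ on a compact set. Only then does the paper invoke Shiota's Nash approximation \cite[II.4.1]{sh} to pass from the $\mathcal S^\mu$ function $H=g^N$ to a Nash $h$ that is $H$-close to $H$, whence $0<h<2H<\veps$ and the derivative bounds follow. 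Your construction is more direct: it produces the Nash function $h$ in one shot via the Mostowski embedding and the explicit global model $c'(1+\|y\|^2)^{-K}$, and it replaces the compactness/Leibniz machinery by a \L ojasiewicz-at-infinity bound and polynomial-growth control of Nash data on a closed semialgebraic set. The paper's approach is more modular (the intermediate $\mathcal S^\mu$ lemma is reused elsewhere) and keeps all estimates local; yours is shorter, avoids any appeal to Shiota's approximation theorem, and makes the dependence on $\veps$ completely explicit.
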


Before proving the previous lemma we need some preliminary results. Given a multi-index $\alpha:=(\alpha_1,\ldots,\alpha_n)\in \N^n$ we denote with $|\alpha|=\alpha_1+\cdots+ \alpha_n$ its \textit{length}. The \textit{factorial of} $\alpha$ is defined as $\alpha!:=\alpha_1!\cdots\alpha_n!$, while for each $x:=(x_1,\ldots,x_n)\in\R^n$ the $\alpha^{th}$-\textit{power of} $x$ is defined as
$$
x^{\alpha}=x_1^{\alpha_1}\cdots x_n^{\alpha_n}.
$$
For each $\beta:=(\beta_1,\ldots,\beta_n)\in \mathbb{N}^n$ the \textit{sum of $\alpha$ and $\beta$} is defined component-wise, that is
$$
\alpha+\beta:=(\alpha_1+\beta_1,\ldots,\alpha_n+\beta_n).
$$
Moreover, we write $\beta\leq\alpha$ if $\beta_i\leq \alpha_i$ for each $i=1,\ldots,n$.

We start with the following two well-know elementary results that we include here for the shake of completeness. 

\begin{lem}\label{derlem2}
For each $\alpha:=(\alpha_1,\ldots,\alpha_n)\in\mathbb{N}^n$ and each integer $m\geq 1$ we have
\begin{equation}\label{derlem2f}
\sum_{\beta_1+\cdots+ \beta_m=\alpha}\frac{\alpha!}{\beta_1!\cdots\beta_m!}=m^{|\alpha|}
\end{equation}
\end{lem}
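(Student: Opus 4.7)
The plan is to reduce the identity to the classical (single-index) multinomial theorem applied separately in each of the $n$ coordinates of the multi-indices, and then reassemble the coordinate-wise data into multi-index data.

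More precisely, I would introduce $n\cdot m$ formal variables $x_{i,j}$ (with $i=1,\dots,m$ and $j=1,\dots,n$) and consider the polynomial
\[
P(x):=\prod_{j=1}^{n}\bigl(x_{1,j}+x_{2,j}+\cdots+x_{m,j}\bigr)^{\alpha_j}.
\]
Applying the ordinary multinomial theorem to each factor gives
\[
P(x)=\prod_{j=1}^{n}\sum_{\substack{\gamma_{1,j}+\cdots+\gamma_{m,j}=\alpha_j\\ \gamma_{i,j}\in\N}}\frac{\alpha_j!}{\gamma_{1,j}!\cdots\gamma_{m,j}!}\,x_{1,j}^{\gamma_{1,j}}\cdots x_{m,j}^{\gamma_{m,j}}.
\]
The main step is to expand the product over $j$ and package the indices $\gamma_{i,j}$ into multi-indices $\beta_i:=(\gamma_{i,1},\dots,\gamma_{i,n})\in\N^n$. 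Under this bookkeeping the constraints $\gamma_{1,j}+\cdots+\gamma_{m,j}=\alpha_j$ for all $j$ become exactly $\beta_1+\cdots+\beta_m=\alpha$, while the coefficient $\prod_{j=1}^{n}\frac{\alpha_j!}{\gamma_{1,j}!\cdots\gamma_{m,j}!}$ collapses to $\frac{\alpha!}{\beta_1!\cdots\beta_m!}$ by the definitions $\alpha!=\prod_j\alpha_j!$ and $\beta_i!=\prod_j\gamma_{i,j}!$.

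Finally, I would evaluate the identity at $x_{i,j}=1$ for all $i,j$. On the factored side each of the $n$ inner sums equals $m^{\alpha_j}$, so $P(1)=\prod_{j=1}^{n}m^{\alpha_j}=m^{|\alpha|}$; on the reassembled side the monomials are all $1$ and one recovers precisely the left-hand side of \eqref{derlem2f}. There is no serious obstacle: the only slightly delicate point is the combinatorial bijection between tuples $(\gamma_{i,j})_{i,j}$ with row sums $\alpha_j$ and tuples of multi-indices $(\beta_i)_i$ with $\sum_i\beta_i=\alpha$, but this is immediate from the componentwise definition of addition in $\N^n$.
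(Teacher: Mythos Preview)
Your proof is correct and follows essentially the same route as the paper: both establish the multi-index multinomial expansion $(x_1+\cdots+x_m)^\alpha=\sum_{\beta_1+\cdots+\beta_m=\alpha}\frac{\alpha!}{\beta_1!\cdots\beta_m!}\prod_i x_i^{\beta_i}$ (your $P(x)$ is exactly this expression written coordinatewise) and then specialize to $x_{i,j}=1$. The only cosmetic difference is that the paper proves the expansion by induction on $n$, peeling off one coordinate at a time, whereas you apply the single-variable multinomial theorem to all $n$ factors at once and reassemble; the combinatorial bookkeeping is identical.
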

\begin{proof}
Let $m\geq 1$ be an integer. Denote $x_i:=(x_{i1},\ldots,x_{in})$. We consider the following preliminary formula
\begin{equation}\label{derlem2f1}
(x_1+\cdots+x_m)^{\alpha}=\sum_{\beta_1+\cdots+ \beta_m=\alpha}\frac{\alpha!}{\beta_1!\cdots\beta_m!}\prod_{i=1}^m x_i^{\beta_i}
\end{equation}
The case $n=1$ follows from \cite[Prop.3.10]{ai}. We proceed by induction on $n$. Denote $\alpha':=(\alpha_1,\ldots,\alpha_{n-1})$ and $x_i':=(x_{i1},\ldots,x_{i,n-1})$ for $n\geq2$ and suppose
$$
(x_1'+\cdots+x_m')^{\alpha'}=\sum_{\beta_1'+\cdots+\beta_m'=\alpha'}\frac{\alpha'!}{\beta_1'!\cdots\beta_m'!}\prod_{i=1}^m x_i'^{\beta_i'}
$$ 
where $\beta_i':=(\beta_{i1},\ldots,\beta_{i,n-1})$. Then
\begin{multline*}
(x_1+\cdots+x_m)^{\alpha}=(x_1'+\cdots+x_m')^{\alpha'}(x_{1n}+\cdots+x_{mn})^{\alpha_n}\\
=\Big(\sum_{\beta_1'+\cdots+\beta_m'=\alpha'}\frac{\alpha'!}{\beta_1'!\cdots\beta_m'!}\prod_{i=1}^m x_i'^{\beta_i'}\Big)\Big(\sum_{\beta_{1n}+\cdots+\beta_{mn}=\alpha_n}\frac{\alpha_n!}{\beta_{1n}!\cdots\beta_{mn}!}\prod_{i=1}^m x_{in}^{\beta_{in}}\Big)\\
=\sum_{\beta_1+\cdots+ \beta_m=\alpha}\frac{\alpha!}{\beta_1!\cdots\beta_m!}\prod_{i=1}^m x_i^{\beta_i}
\end{multline*}
where $\beta_i:=(\beta_i',\beta_{in})$.

Substituting $x_i=(1,\ldots,1)$ in \eqref{derlem2f1} for each $i=1,\ldots,m$, we deduce
\begin{equation}\label{K}
\sum_{\beta_1+\cdots+ \beta_m=\alpha}\frac{\alpha!}{\beta_1!\cdots\beta_m!}=m^{\alpha_1}\cdots m^{\alpha_n}=m^{|\alpha|}.
\end{equation}
for each $\alpha\in\mathbb{N}^n$ and each integer $m\geq 1$, as required.
\end{proof}

Let $U\subset\R^n$ be an open subset and let $f:U\to\R$ be a function of class $\Cont^{\mu}$ for some $\mu\geq 1$. For each $\alpha\in \mathbb{N}^{n}$ such that $|\alpha|\leq\mu$ we denote with
$$
D^{(\alpha)} f:=\frac{\partial^{|\alpha|}}{\partial \x_1^{\alpha_1}\ldots\partial \x_n^{\alpha_n}}f,
$$
the partial derivative of $f$ of order $\alpha$, where $D^{(\alpha)}f=f$ if $|\alpha|=0$. 

\begin{lem}[Leibniz rule for powers]\label{derlem1}
Let $U\subset\R^n$ be an open subset and $\mu\geq 1$ an integer. Let $f:U\to\R$ be a function of class $\Cont^{\mu}$. Then for each $\alpha\in\mathbb{N}^n$ such that $|\alpha|\leq \mu$ and each integer $m\geq 1$, it holds
$$
D^{(\alpha)} f^m=\displaystyle\sum_{\beta_1+\cdots+ \beta_m=\alpha}\frac{\alpha!}{\beta_1!\cdots\beta_m!}\prod_{1\leq r\leq m} D^{(\beta_r)}f.
$$
\end{lem}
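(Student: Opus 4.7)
The plan is to prove the formula by induction on the exponent $m$, using the classical two-factor Leibniz rule as the engine of the induction step. The binomial-style identity appearing in Lemma \ref{derlem2} is essentially the numerical shadow of this computation (and is used subsequently, e.g.\ to bound $D^{(\alpha)} f^m$), but for the statement of Lemma \ref{derlem1} itself what we need is the combinatorial bookkeeping when passing from $m$ to $m+1$.

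For the base case $m=1$, the only decomposition of $\alpha$ into one summand is $\beta_1 = \alpha$, the coefficient $\alpha!/\alpha! = 1$, and the formula reduces to $D^{(\alpha)} f = D^{(\alpha)} f$. For the inductive step, assume the formula holds for some $m\ge 1$. Write $f^{m+1} = f^m \cdot f$ and apply the standard multivariable Leibniz rule for products of two $\Cont^\mu$ functions:
\begin{equation*}
D^{(\alpha)}(f^m \cdot f) \;=\; \sum_{\gamma \le \alpha} \binom{\alpha}{\gamma}\, D^{(\gamma)}(f^m)\, D^{(\alpha-\gamma)}(f),
\end{equation*}
where $\binom{\alpha}{\gamma} = \alpha!/(\gamma!\,(\alpha-\gamma)!)$. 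Substituting the inductive hypothesis for $D^{(\gamma)}(f^m)$ and simplifying the factor $\gamma!$ against $1/\gamma!$ yields
\begin{equation*}
D^{(\alpha)} f^{m+1} \;=\; \sum_{\gamma \le \alpha} \;\sum_{\beta_1+\cdots+\beta_m = \gamma} \frac{\alpha!}{\beta_1!\cdots\beta_m!\,(\alpha-\gamma)!}\, \Big(\prod_{r=1}^m D^{(\beta_r)} f\Big)\, D^{(\alpha-\gamma)} f.
\end{equation*}

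The key (and only nontrivial) step is the reindexing: set $\beta_{m+1} := \alpha - \gamma$. As $\gamma$ ranges over multi-indices with $\gamma \le \alpha$ and, for each such $\gamma$, $(\beta_1,\dots,\beta_m)$ ranges over decompositions of $\gamma$, the tuple $(\beta_1,\dots,\beta_m,\beta_{m+1})$ ranges bijectively over all decompositions $\beta_1+\cdots+\beta_{m+1} = \alpha$. Under this substitution $(\alpha-\gamma)! = \beta_{m+1}!$, and the double sum collapses to the single sum
\begin{equation*}
D^{(\alpha)} f^{m+1} \;=\; \sum_{\beta_1+\cdots+\beta_{m+1} = \alpha} \frac{\alpha!}{\beta_1!\cdots\beta_{m+1}!}\, \prod_{r=1}^{m+1} D^{(\beta_r)} f,
\end{equation*}
which is exactly the desired formula for $m+1$, closing the induction.

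I do not anticipate any real obstacle here: the content is purely combinatorial and the hypothesis $|\alpha| \le \mu$ is used only to guarantee that every derivative appearing in the computation exists and is continuous (so that the two-factor Leibniz rule applies at each stage). The induction terminates in $m-1$ applications of the two-factor rule, and the bijective reindexing above is the one piece that has to be carried out carefully.
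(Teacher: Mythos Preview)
Your proof is correct and follows essentially the same approach as the paper: induction on $m$ with the two-factor Leibniz rule applied to $f^{m+1}=f^m\cdot f$ (the paper splits as $f\cdot f^{m-1}$ instead, which is an immaterial difference), followed by the same cancellation of factorials and reindexing of the double sum into a single sum over $(m+1)$-tuples summing to $\alpha$.
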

\begin{proof}
Using generalized Leibniz's formula \cite[\S6.4]{sa} we have 
\begin{equation}\label{Leibniz}
D^{(\alpha)}(gh)=\sum_{\beta\leq \alpha}\frac{\alpha!}{\beta!(\alpha-\beta)!}D^{(\alpha)}gD^{(\alpha-\beta)}h
\end{equation}
for each $1\leq|\alpha|\leq \mu$ and each pair of functions $g,h:U\to\R$ of class $\Cont^{\mu}$. As the statement is clear when $m=1$, we may assume $m\geq 2$. Using \eqref{Leibniz} and an inductive argument we have
\begin{align*}
D^{(\alpha)} f^m&=\sum_{\beta_1\leq \alpha}\frac{\alpha!}{\beta_1!(\alpha-\beta_1)!}(D^{(\beta_1)}f)(D^{(\alpha-\beta_1)}f^{m-1})\\
&=\sum_{\beta_1\leq \alpha}\frac{\alpha!}{\beta_1!(\alpha-\beta_1)!}(D^{(\beta_1)}f)\displaystyle\sum_{\beta_2+\cdots+ \beta_m=\alpha-\beta_1}\frac{(\alpha-\beta_1)!}{\beta_2!\cdots \beta_{m}!}\prod_{2\leq r\leq m} D^{(\beta_r)}f\\
&=\sum_{\beta_1+\cdots+ \beta_m=\alpha}\frac{\alpha!}{\beta_1!\cdots\beta_m!}\prod_{1\leq r\leq m} D^{(\beta_r)}f,
\end{align*}
for each $m\geq 2$, as required.
\end{proof}

We prove next the following \L ojasiewicz's style formula.

\begin{lem}\label{derlem3}
Let $\mu\geq 1$ be an integer. Let $U\subset\R^n$ be an open set such that $\cl(U)$ is compact and $f:\cl(U)\to\R$ a function of class $\Cont^{\mu}$ such that $|f(x)|<1$ for each $x\in \cl(U)$. Then there exists an integer $M>\mu$ such that for each integer $N\geq M$ it holds
$$
|D^{(\alpha)} f^N(x)|<|f(x)|
$$
for each $x\in U$ and each $\alpha\in\N^n$ such that $|\alpha|\leq \mu$.
\end{lem}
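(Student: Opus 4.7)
The plan is to unfold $D^{(\alpha)}f^N$ using the power-Leibniz formula of Lemma \ref{derlem1} and then exploit the fact that in every summand there must be at least $N-\mu$ ``undifferentiated'' copies of $f$. These copies will provide an extra factor of $|f(x)|$ plus $N-\mu-1$ factors that are uniformly small because $\|f\|_{\cl(U)}<1$, and the exponential decay they produce will beat both the polynomial growth $N^{|\alpha|}$ coming from the multinomial coefficients and the constant factor coming from the derivatives of $f$.

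Concretely, I would first set
\[
a:=\max_{x\in\cl(U)}|f(x)|\in[0,1),\qquad C:=\max\bigl\{1,\ |D^{(\beta)}f(x)|:\, x\in\cl(U),\, |\beta|\leq\mu\bigr\},
\]
both finite since $\cl(U)$ is compact and $f$ is of class $\Cont^{\mu}$. By Lemma \ref{derlem1},
\[
|D^{(\alpha)} f^N(x)|\leq \sum_{\beta_1+\cdots+\beta_N=\alpha}\frac{\alpha!}{\beta_1!\cdots\beta_N!}\prod_{r=1}^N|D^{(\beta_r)}f(x)|.
\]
For a fixed multiindex tuple $(\beta_1,\dots,\beta_N)$ with $\sum\beta_r=\alpha$, let $k$ be the number of indices $r$ for which $\beta_r\neq0$; since $|\alpha|\leq\mu$, we have $k\leq\mu$, so assuming $N\geq\mu+1$ there are at least $N-k\geq N-\mu\geq 1$ factors equal to $f(x)$ itself. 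One of them can be factored out as $|f(x)|$, and the remaining $N-k-1$ copies contribute at most $a^{N-k-1}\leq a^{N-\mu-1}$ (here we use $a<1$ and $k\leq\mu$). The $k$ factors coming from proper derivatives contribute at most $C^{k}\leq C^{\mu}$.

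Putting these bounds together and using Lemma \ref{derlem2} to evaluate
\[
\sum_{\beta_1+\cdots+\beta_N=\alpha}\frac{\alpha!}{\beta_1!\cdots\beta_N!}=N^{|\alpha|}\leq N^{\mu},
\]
I obtain the clean estimate
\[
|D^{(\alpha)}f^N(x)|\ \leq\ N^{\mu}\,C^{\mu}\,a^{N-\mu-1}\,|f(x)|
\]
valid uniformly for $x\in\cl(U)$ and $|\alpha|\leq\mu$. Since $a<1$, the quantity $N^{\mu}C^{\mu}a^{N-\mu-1}$ tends to $0$ as $N\to\infty$, so there exists an integer $M>\mu$ such that $N^{\mu}C^{\mu}a^{N-\mu-1}<1$ for every $N\geq M$, and the lemma follows.

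The only delicate step is the combinatorial bookkeeping of step two: checking carefully that exactly $N-k$ factors reduce to $f$, so that one can legitimately separate one factor of $|f(x)|$ and still leave $N-\mu-1$ further factors to absorb into $a^{N-\mu-1}$; this is where the hypotheses $|\alpha|\leq\mu$ and $N\geq\mu+1$ are both used. Everything else is a straightforward combination of Lemmas \ref{derlem1} and \ref{derlem2} with the compactness of $\cl(U)$ and the exponential-beats-polynomial principle.
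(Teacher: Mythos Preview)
Your proposal is correct and follows essentially the same route as the paper: both arguments expand $D^{(\alpha)}f^N$ via Lemma~\ref{derlem1}, observe that at least $N-\mu$ of the $N$ factors in each summand are undifferentiated copies of $f$, pull out one factor of $|f(x)|$, bound the remaining undifferentiated factors by $a^{N-\mu-1}$ (the paper writes $L$ for your $a$) and the differentiated ones by $C^{\mu}$, then sum the multinomial coefficients via Lemma~\ref{derlem2} to obtain exactly $|D^{(\alpha)}f^N(x)|\leq(CN)^{\mu}a^{N-\mu-1}|f(x)|$ and conclude by the exponential-beats-polynomial principle. The only cosmetic differences are that the paper sets $C:=1+\max(\cdots)$ rather than your $\max\{1,\cdots\}$ and spells out the limit computation with a logarithm, but the structure and the key estimate are identical.
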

\begin{proof}
Let $\alpha\in\mathbb{N}^n$ be such that $|\alpha|\leq\mu$. Let $m\geq 1$ be such that $m>|\alpha|$ end let $\beta_1,\ldots,\beta_m\in\mathbb{N}^n$ such that $\beta_1+\cdots+ \beta_m=\alpha$. Then there exist at least $m-|\alpha|$ indices $r\in\{1,\ldots,m\}$ such that $|\beta_r|=0$ and at most $|\alpha|$ indices $r\in\{1,\ldots,m\}$ such that $|\beta_r|\neq0$ . Define
$$
C:=1+\max_{|\alpha|\leq \mu}\{\max_{x\in \cl(U)}\{|D^{(\alpha)}f(x)|\}\}\geq1\quad\text{and}\quad L:=\max_{x\in \cl(U)}\{|f(x)|\}<1.
$$
We have $L<1$, because $\cl(U)$ is compact and $|f(x)|<1$ for each $x\in \cl(U)$. By Lemmas \ref{derlem2} and \ref{derlem1} and as $|f(x)|<1$ for each $x\in U$, we deduce
\begin{align}
\begin{split}\label{multinomial2}
|D^{(\alpha)} f^m(x)|&\leq\sum_{\beta_1+\cdots+\beta_m=\alpha}\frac{\alpha!}{\beta_1!\cdots\beta_m!}\prod_{1\leq r\leq m}|D^{(\beta_r)}f(x)|\\
&=\sum_{\beta_1+\cdots+ \beta_m=\alpha}\frac{\alpha!}{\beta_1!\cdots\beta_m!}\prod_{\substack{1\leq r\leq m\\ 
|\beta_r|\neq0}}|D^{(\beta_r)}f(x)|\prod_{\substack{1\leq r\leq m\\ 
|\beta_r|=0}}|f(x)|\\
&\leq \sum_{\beta_1+\cdots+ \beta_m=\alpha}\frac{\alpha!}{\beta_1!\cdots\beta_m!}|f(x)|^{m-|\alpha|}\prod_{\substack{1\leq r\leq m\\
|\beta_r|\neq0}}C\\
&\leq C^{|\alpha|}|f(x)|^{m-|\alpha|}\sum_{\beta_1+\cdots+ \beta_m=\alpha}\frac{\alpha!}{\beta_1!\cdots\beta_m!} = C^{|\alpha|}m^{|\alpha|}|f(x)|^{m-|\alpha|}\\
&\leq (Cm)^\mu|f(x)|^{m-\mu}\leq(Cm)^\mu L^{m-\mu-1}|f(x)|
\end{split}
\end{align}
for each $x\in U$, each $\alpha\in\mathbb{N}^n$ such that $|\alpha|\leq \mu$ and each integer $m>\mu$. We claim: {\em there exists an integer $M>\mu$ such that 
\begin{equation}\label{bound}
(CN)^{\mu}L^{N-\mu-1}<1.
\end{equation}
for each integer $N\geq M$.} 

Consider the real function $g:(0,+\infty)\to\R$ given by
\begin{multline*}
g(x):=\log((Cx)^\mu L^{x-\mu-1})=\mu\log(C)+\mu\log(x)+(x-\mu-1)\log(L)\\
=\mu(\log(C)-\log(L))-\log(L)+\Big(\mu\frac{\log(x)}{x}+\log(L)\Big)x
\end{multline*}
As $\mu(\log(C)-\log(L))-\log(L)$ is a positive constant (because $C>1$ and $0<L<1$), $\lim_{x\to+\infty}\frac{\log(x)}{x}=0$ and $\log(L)<0$, there exists an integer $M>\mu$ such that if $x\geq M$, then $g(x)<0$. Thus, $\exp(g(x))=(Cx)^\mu L^{x-\mu-1}<1$ for each $x\geq M$. Consequently, the formula \eqref{bound} holds for each integer $N\geq M$.

By \eqref{multinomial2} and \eqref{bound} we deduce 
$$
|D^{(\alpha)}f^N(x)|\leq (CN)^\mu L^{N-\mu-1}|f(x)|<|f(x)|
$$
for each $x\in U$ and each $\alpha\in \mathbb{N}^n$ such that $|\alpha|\leq \mu$, as required.
\end{proof}
 
Next, we show that there exist strictly positive ${\mathcal S}^{\mu}$ functions arbitrarily close to zero with respect to the ${\mathcal S}^{\mu}$ topology.

\begin{lem}\label{derlem4}
Let $M\subset\R^n$ be a Nash manifold and $U\subset M$ an open semialgebraic set. Then, for each integer $\mu\geq 0$ there exist strictly positive ${\mathcal S}^{\mu}$ functions $h:U\to(0,1)$ arbitrarily close to zero with respect to the ${\mathcal S}^{\mu}$ topology. 
\end{lem}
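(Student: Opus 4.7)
The goal is to produce, for an arbitrary strictly positive continuous semialgebraic function $\veps:U\to(0,\infty)$, a function $h\in{\mathcal S}^\mu(U)$ with $0<h<\veps$ and $|\xi_{i_1}\cdots\xi_{i_\ell}(h)|<\veps$ on $U$ for every tuple $1\le i_1,\ldots,i_\ell\le s$ and every $1\le\ell\le\mu$, where $\xi_1,\ldots,\xi_s$ are Nash tangent fields spanning $TU$. My plan is to exhibit $h$ in the explicit Nash form $h_{\lambda,N}(x):=\lambda(1+\|x\|^2)^{-N}$, tuning $\lambda>0$ small and $N\in\N$ large. First I would apply Mostowski's trick (Proposition \ref{Mos}) to embed $U$ as a closed Nash submanifold of some $\R^p$; since the ${\mathcal S}^{\mu}$ topology is invariant under Nash diffeomorphisms (Proposition \ref{leftcomp}), this causes no loss of generality, and we may treat $U\subset\R^p$ as closed.

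The key analytic input is the semialgebraic \L ojasiewicz growth bound on closed semialgebraic subsets of $\R^p$: every continuous semialgebraic function on $U$ has polynomial growth at infinity. Applied to $1/\veps$ and to the coordinate functions $(\xi_i)_j$ of the Nash tangent fields (which extend to Nash functions on a neighborhood of $U$ in $\R^p$), one obtains a single integer $k\ge 1$ and a constant $c>0$ with
\[
\veps(x)\ge c(1+\|x\|^2)^{-k}\quad\text{and}\quad|(\xi_i)_j(x)|\le(1+\|x\|^2)^{k/2}\qquad\text{for all }x\in U,\ \text{all }i,j.
\]
A direct calculation gives $|D^{(\alpha)}(1+\|x\|^2)^{-N}|\le A_\alpha N^{|\alpha|}(1+\|x\|^2)^{-N}$ with $A_\alpha>0$ independent of $N$, and iterated application of the Leibniz rule (in the spirit of Lemmas \ref{derlem1} and \ref{derlem3}) then yields an estimate of the form
\[
|\xi_{i_1}\cdots\xi_{i_\ell}(h_{\lambda,N})(x)|\ \le\ \lambda\,B_\ell\,N^{\ell}\,(1+\|x\|^2)^{-N+\ell k/2},\qquad 0\le\ell\le\mu,
\]
with constants $B_\ell\ge 1$ independent of $\lambda$ and $N$.

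To finish, I would choose $N$ so large that $-N+\mu k/2\le -k$, so the right-hand side above is at most $\lambda B_\ell N^\ell(1+\|x\|^2)^{-k}$ for every $\ell\le\mu$, and then pick $\lambda>0$ so small that $\lambda B_\ell N^\ell<c$ for $\ell=0,\ldots,\mu$. Combining with the lower bound on $\veps$ yields $|\xi_{i_1}\cdots\xi_{i_\ell}(h_{\lambda,N})(x)|<\veps(x)$ for every $\ell\le\mu$ and every $x\in U$, proving that $h_{\lambda,N}$ lies in the basic $\veps$-neighborhood of $0$ in the ${\mathcal S}^\mu$ topology. As $\veps$ was arbitrary, this produces strictly positive ${\mathcal S}^\mu$ (in fact Nash) functions on $U$ arbitrarily close to $0$. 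I expect the combinatorial bookkeeping for the iterated tangent-field derivatives to be the main obstacle: unlike in Lemma \ref{derlem3}, the set $U$ is not assumed to have compact closure, and the Nash tangent fields may grow polynomially at infinity, so it is exactly the explicit $N^\ell$-growth in the derivative bound (which must beat the polynomial growth of the tangent fields) that makes a single choice of $\lambda$ and $N$ work uniformly on all of $U$.
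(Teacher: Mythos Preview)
Your approach is correct and genuinely different from the paper's. The paper first passes to a Nash tubular neighborhood to make $U$ open in $\R^n$, then composes with the Nash diffeomorphism $\R^n\to\Bb_n$ to force $\cl(U)$ compact; it then takes an ${\mathcal S}^\mu$ equation $g$ of the exterior boundary $\delta^\bullet U$, normalizes it so that $|g|<\tfrac12$, uses \L ojasiewicz's inequality to compare a power of $g$ with $\veps$, and finally invokes Lemma \ref{derlem3} (which needs compact closure) to bound the higher derivatives of a large even power $g^N$. The output is only ${\mathcal S}^\mu$, which is why the paper proves Lemma \ref{zeroNash} separately by approximating this $h$ with a Nash function. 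Your route via Mostowski's trick plus the polynomial growth bound \cite[Prop.2.6.2]{bcr} avoids the compactification entirely and produces an explicit Nash function $\lambda(1+\|x\|^2)^{-N}$, so you in fact obtain Lemma \ref{zeroNash} at the same time. What you trade for this is exactly the bookkeeping you flag at the end.

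Two small corrections. First, the invariance of the ${\mathcal S}^\mu$ topology under the Nash diffeomorphism coming from Mostowski's trick follows from Proposition \ref{rightcomp} (continuity of composition on the right with a proper map, applied to the diffeomorphism and its inverse), not Proposition \ref{leftcomp}. Second, your exponent $-N+\ell k/2$ is not quite right as stated: expanding $\xi_{i_1}\cdots\xi_{i_\ell}(h)$ in ambient coordinates produces, besides the components $(\xi_i)_j$, also their iterated \emph{tangent} derivatives $\xi_{j_1}\cdots\xi_{j_m}((\xi_i)_j)$ for $m\le\ell-1$, each a Nash function on the closed set $U\subset\R^p$ with its own polynomial growth. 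You should therefore enlarge $k$ once and for all so that it also bounds these finitely many auxiliary functions; after that your choice of $N$ and $\lambda$ goes through unchanged.
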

\begin{proof}
Let $\veps:U\to\R$ be a strictly positive continuous semialgebraic function. As $U$ is open in $M$ and $M$ is a Nash submanifold of $\R^n$, $U$ itself is a Nash submanifold of $\R^n$. After substituting $U$ by a Nash tubular neighborhood $(\Omega,\nu)$ of $U$ in $\R^n$ and $\veps$ by $\veps\circ\nu$, we may assume: \textit{$U$ is an open semialgebraic subset of $\R^n$.} Let $\Bb_n:=\{x\in\R^n:\ \|x\|<1\}$ be the open ball of center the origin and radius $1$ and consider
$$
\Phi:\R^n\to\Bb_n,\ x\mapsto\frac{x}{\sqrt{1+\|x\|^2}},
$$
which is a Nash diffeomorphism. After replacing $U$ by $\Phi(U)$ and $\veps$ by $\veps\circ \Phi^{-1}$, we may assume that $U$ is a bounded set, so: \textit{$\cl(U)$ is a compact semialgebraic set.}

The exterior boundary $\delta^\bullet U:=\cl(U)\setminus U$ is a closed semialgebraic subset of $\R^n$. By \cite[Prop.I.4.5]{sh} there exists an ${\mathcal S}^{\mu}$ function $f:\R^n\to\R$ such that $\delta^\bullet U=\{f=0\}$. Define
$$
g:=\frac{f}{2(1+f^2)}:\R^n\to\R,
$$
which is an ${\mathcal S}^{\mu}$ function such that:
\begin{itemize}
\item $\delta^\bullet U=\{g=0\}$,
\item $|g(x)|< \tfrac{1}{2}$ for each $x\in\R^n$. 
\end{itemize}
We substitute $g$ with its restriction $g|_{\cl(U)}:\cl(U)\to\R$. By \cite[Prop.2.6.4]{bcr} there exists an integer $N>0$ such that the semialgebraic function $\veps^{*}:\cl(U)\to\R$ defined as 
$$
\veps^*(x):=
\begin{cases}
g^{2N}\veps&\text{if $x\in U$},\\
0&\text{if $x\in\delta^\bullet U$}
\end{cases}
$$
is continuous. Observe that $\veps^*$ is strictly positive on $U$ because $g\neq 0$ on $U$. As $|g|<\tfrac{1}{2}$, we deduce $\veps^*(x)<\veps(x)$ for each $x\in U$. After substituting $\veps$ by $\veps^*$, we may assume: \textit{The semialgebraic function $\veps$ extends continuously on $\cl(U)$ by setting $\veps(0)=0$ on $\delta^\bullet U$.}

As $\{\veps=0\}=\{g=0\}$, there exist by \L{}ojasiewicz's inequality \cite[Cor.2.6.7]{bcr} an integer $N_0\geq 0$ and a real number $C>0$ such that $|g^{N_0}(x)|\leq C\veps(x)$ for each $x\in\cl(U)$. 

Let $N_1>0$ be an integer such that $\tfrac{C}{2^{N_1}}<1$. As $\veps(x)\neq 0$ for each $x\in U$, we have $\tfrac{C}{2^{N_1}}\veps(x)<\veps(x)$ for each $x\in U$. We deduce
\begin{equation}\label{Loj}
|g^{N_0+N_1}(x)|<|g^{N_0}(x)||g^{N_1}(x)|\leq\frac{1}{2^{N_1}}|g^{N_0}(x)|\leq\frac{C}{2^{N_1}}\veps(x)<\veps(x)
\end{equation}
for each $x\in U$. By Lemma \ref{derlem3}, there exists an integer $N_2>0$ such that
\begin{equation}\label{Loj2}
|D^{(\alpha)} g^{2N_2(N_0+N_1)}(x)|<|g^{N_0+N_1}(x)|<\veps(x)
\end{equation}
for each $x\in U$ and each $\alpha\in\N^n$ such that $|\alpha|\leq \mu$. 

Define $N:=2N_2(N_0+N_1)$. As $|g|<\frac{1}{2}$, $g\neq 0$ on $U$ and $N$ is even, we deduce by \eqref{Loj}
$$
0<g^N(x)<\min\{\veps(x),1\}
$$ 
for each $x\in U$. By \eqref{Loj2} the ${\mathcal S}^{\mu}$ function $h:=g^N:\cl(U)\to\R$ satisfies in addition $|D^{(\alpha)}h(x)|<\veps(x)$ for each $x\in U$ and each $\alpha\in\N^n$ such that $|\alpha|\leq \mu$, as required.
\end{proof}

We are ready to prove Lemma \ref{zeroNash}.

\begin{proof}[Proof of Lemma \em\ref{zeroNash}]
Let $\veps:U\to\R$ be a strictly positive continuous semialgebraic function bounded by $1$. By Lemma \ref{derlem4} there exists a strictly positive ${\mathcal S}^{\mu}$ function $H:U\to(0,1)$ that is $\tfrac{\veps}{2}$-close to zero with respect to the ${\mathcal S}^{\mu}$ topology. In particular $0<H<\tfrac{\veps}{2}$. By \cite[II.4.1]{sh} there exists a Nash function $h:U\to\R$ which is $H$-close to $H$ with respect to the ${\mathcal S}^{\mu}$ topology, so 
\begin{align*}
&0=H-H<h<H+H=2H<\veps\leq1,\\
&|D^{(\alpha)}h|\leq|D^{(\alpha)}H|+|D^{(\alpha)}(h-H)|<\tfrac{\veps}{2}+H<\veps,
\end{align*}
for each $\alpha\in\N^n$ such that $1\leq|\alpha|\leq\mu$, as required.
\end{proof}

\subsection{Pushing a Nash manifold with corners using Nash diffeomorphisms}
Let $M\subset\R^n$ be a $d$-dimensional Nash manifold and let $\Qq\subset M$ be a $d$-dimensional Nash manifold with corners, which is a closed subset of $M$. Let $\sigma:\Qq\times[0,1]\to\Qq$ be the Nash map introduced in Theorem \ref{push} and let $\Sigma:M_0\times[0,1]\to M$ be the Nash extension of $\sigma$ to $M_0\times[0,1]$, where $M_0\subset M$ is a suitable open semialgebraic neighborhood of $\Qq$. For each semialgebraic function $\delta:M_0\to[0,1]$ we define the semialgebraic maps 
\begin{align}
\sigma_{\delta|_\Qq}:\Qq\to\Qq,\ x\mapsto\sigma(x,\delta(x)),\label{sigmadelta}\\
\Sigma_{\delta}:M_0\to M,\ x\mapsto\Sigma(x,\delta(x)).\label{Sigmadelta}
\end{align}
Observe that $\Sigma_{\delta}$ is a semialgebraic extension of $\sigma_{\delta|_\Qq}$ to $M_0$. Both semialgebraic maps above are Nash if $\delta$ is Nash. If $\delta(x)>0$ for each $x\in\Qq$, we have by Theorem \ref{push} $\sigma_\delta(\Qq)\subset\Int(\Qq)$, whereas if $\delta=0$ we have $\sigma_{\delta|_\Qq}=\id_\Qq$ and $\Sigma_{\delta}$ is the inclusion of $M_0$ in $M$. In addition, we have the following.

\begin{cor}\label{approxderrem}
Let $\veps:M_0\to\R$ be a strictly positive continuous semialgebraic function and let $\mu\geq1$ be an integer. Then there exists a strictly positive continuous semialgebraic function $\eps:M_0\to\R$ such that if $\delta:M_0\to\R$ is an ${\mathcal S}^{\mu}$ function $\eps$-close to zero with respect to the ${\mathcal S}^{\mu}$ topology, then $\Sigma_{\delta}$ is $\veps$-close with respect to the ${\mathcal S}^{\mu}$ topology to the inclusion ${\tt j}_{M_0}:M_0\hookrightarrow M$. 

In addition, if $\delta$ is a Nash function and $\veps$ is small enough, $\Sigma_{\delta}:M_0\to M$ is a Nash embedding.
\end{cor}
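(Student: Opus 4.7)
The plan is to invoke Hadamard's lemma in the Nash category: since $\Sigma(x,0)=x$ on $M_0$, the Nash map $(x,t)\mapsto\Sigma(x,t)-x$ vanishes identically on the hypersurface $\{t=0\}\subset M_0\times[-1,1]$, so factoring out $t$ produces a Nash map $\Phi:M_0\times[-1,1]\to\R^n$ with $\Sigma(x,t)-x=t\Phi(x,t)$ (the quotient is analytic along the zero locus and semialgebraic, hence Nash by \cite[Prop.8.1.8]{bcr}). Thus
$$\Sigma_\delta(x)-x=\delta(x)\,\Phi(x,\delta(x)),$$
and the first assertion reduces to a uniform estimate of $D^\alpha\bigl(\delta\cdot\Phi(\cdot,\delta(\cdot))\bigr)$ for $|\alpha|\le\mu$.

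The estimate proceeds by the Leibniz rule followed by Fa\`a di Bruno applied to the inner composition $x\mapsto\Phi(x,\delta(x))$. Each term of the resulting expansion carries at least one factor $D^\beta\delta$ with $|\beta|\le\mu$, multiplied by a polynomial in partial derivatives $D^\gamma\Phi$ evaluated at $(x,\delta(x))$ for $|\gamma|\le\mu$ and in lower-order derivatives of $\delta$. To absorb the $\Phi$-contribution I would introduce
$$K(x):=1+\max_{|\gamma|\le\mu}\,\max_{|t|\le1}\|D^\gamma\Phi(x,t)\|,$$
which is a strictly positive continuous semialgebraic function on $M_0$ by \cite[Const.3.1]{fg} (the tool used in Lemma \ref{st}) applied to the proper projection $M_0\times[-1,1]\to M_0$. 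Under the preliminary normalisation $|\delta|,|D^\beta\delta|\le1$, the combinatorial expansion yields a bound of the form
$$\|D^\alpha(\Sigma_\delta-{\tt j}_{M_0})(x)\|\le C_\mu\,\max_{|\beta|\le\mu}|D^\beta\delta(x)|\cdot P_\mu(K(x))$$
for a constant $C_\mu>0$ and a polynomial $P_\mu$ of degree at most $\mu$, both depending only on $\mu$ and $n$.

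Setting $\eps(x):=\min\{1,\veps(x)/(C_\mu P_\mu(K(x)))\}$ then yields the strictly positive continuous semialgebraic function asserted in the statement: any ${\mathcal S}^\mu$ function $\delta$ that is $\eps$-close to $0$ in the ${\mathcal S}^\mu$ topology forces $\Sigma_\delta$ to be $\veps$-close to ${\tt j}_{M_0}$ in the ${\mathcal S}^\mu$ topology. For the embedding clause, I would appeal to Section \ref{emb}: the set ${\mathcal N}^\mu_{\rm emb}(M_0,M)$ is open in ${\mathcal N}(M_0,M)$ with respect to the ${\mathcal S}^\mu$ topology and the inclusion ${\tt j}_{M_0}:M_0\hookrightarrow M$ is itself a Nash embedding; shrinking $\veps$ further if necessary ensures that every Nash map $\veps$-close to ${\tt j}_{M_0}$ lies in ${\mathcal N}^\mu_{\rm emb}(M_0,M)$, so that $\Sigma_\delta$ is a Nash embedding whenever $\delta$ is Nash.

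The main obstacle is the bookkeeping in the Fa\`a di Bruno/Leibniz expansion and the verification that every error term can be absorbed into a single semialgebraic factor of the form $C_\mu P_\mu(K)$ on the possibly non-compact manifold $M_0$. The semialgebraic continuity of $K$, guaranteed by properness of the projection $M_0\times[-1,1]\to M_0$, is precisely what permits the passage from the pointwise bound to a global ${\mathcal S}^\mu$-topological estimate, which is the whole content of the corollary.
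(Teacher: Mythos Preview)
Your argument is correct, but it takes a considerably more laborious route than the paper's. The paper dispatches the first assertion in one line by observing that the map
\[
\Sigma_*:\mathcal{S}^\mu(M_0)\to\mathcal{S}^\mu(M_0,M),\qquad \eta\mapsto\Sigma\circ({\tt j}_{M_0},\eta)
\]
is continuous in the $\mathcal{S}^\mu$ topology (this is Proposition~\ref{leftcomp} applied to the Nash map $\Sigma$, composed with the obviously continuous $\eta\mapsto({\tt j}_{M_0},\eta)$). Since $\Sigma_*(0)={\tt j}_{M_0}$, continuity at $\eta=0$ immediately produces the required $\eps$. Your approach instead unpacks what that continuity means by hand: Hadamard's factorisation $\Sigma(x,t)-x=t\Phi(x,t)$, then Leibniz and Fa\`a di Bruno to bound $D^\alpha(\delta\cdot\Phi(\cdot,\delta(\cdot)))$, absorbing all $\Phi$-derivatives into the semialgebraic weight $K$ via the proper projection $M_0\times[-1,1]\to M_0$. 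This is exactly the computation hidden inside the proof of Proposition~\ref{leftcomp} (or rather \cite[Prop.~2.D.1]{bfr}), so you are essentially reproving that proposition in this special case. The payoff of your route is an explicit formula for $\eps$ in terms of $\veps$ and $K$; the paper's route is shorter and makes clear that nothing beyond the general machinery of \S\ref{wst} is needed. For the embedding clause both arguments coincide: invoke the openness of $\mathcal{N}^\mu_{\rm emb}(M_0,M)$ from \S\ref{emb}.
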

\begin{proof}
It is enough to use \S\ref{emb} and the continuity of the map 
$$
\Sigma_*:{\mathcal S}^{\mu}(M_0)\to{\mathcal S}^{\mu}(M_0,M),\ \eta\mapsto\Sigma\circ({\tt j}_{M_0},\eta)
$$
with respect to the ${\mathcal S}^{\mu}$ topology, where ${\tt j}_{M_0}:M_0\hookrightarrow M$ is the inclusion.
\end{proof}

As an application of Theorem \ref{push} and Lemma \ref{zeroNash}, we show how to push a Nash manifold with corners $\Qq$ insider its interior $\Int(\Qq)$ using a one parameter family of Nash diffeomorphisms (onto their images) arbitrarily close to the identity map with respect to the ${\mathcal S}^{\mu}$ topology. 

\begin{thm}[Pushing Nash manifolds with corners using Nash diffeomorphisms]\label{pushdiffeo}
Let $M\subset\R^n$ be a $d$-dimensional Nash manifold and let $\Qq\subset M$ be a $d$-dimensional Nash manifold with corners that is closed in $M$. Let $\veps:M_0\to\R$ be a strictly positive continuous semialgebraic function and let $\mu\geq1$ be an integer. Then, there exists a Nash function $\delta:M_0\to(0,1)$ such that the Nash~map
$$
\Psi:M_0\times [0,1]\to M,\ (x,t)\mapsto \Psi_t(x):=\Sigma_{t\delta}(x)=\Sigma(x,t\delta(x))
$$
satisfies
\begin{itemize}
\item[{\rm (i)}] $\Psi_0={\tt j}_{M_0}$ and $\Psi_t(\Qq)\subset\Int(\Qq)$ for each $t\in(0,1]$,
\item[{\rm (ii)}] $\Psi_t:M_0\hookrightarrow M$ is a Nash embedding for each $t\in[0,1]$,
\item[{\rm (iii)}] $\Psi_t$ is $\veps$-close to the inclusion ${\tt j}_{M_0}:M_0\hookrightarrow M$ with respect to the ${\mathcal S}^{\mu}$ topology for each $t\in [0,1]$.
\end{itemize}
\end{thm}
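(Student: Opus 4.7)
The plan is to combine Lemma \ref{zeroNash} with Corollary \ref{approxderrem}, exploiting the elementary monotonicity fact that multiplication by a scalar $t\in[0,1]$ only shrinks spatial partial derivatives. The order of choices is important. First, using \S\ref{emb} (openness of $\Nn_{\rm emb}^{\mu}(M_0,M)$ in $\Nn(M_0,M)$ with respect to the ${\mathcal S}^{\mu}$ topology, valid because $\mu\geq 1$), I would shrink the given $\veps$ if necessary so that every Nash map $\varphi:M_0\to M$ which is $\veps$-close to the inclusion ${\tt j}_{M_0}:M_0\hookrightarrow M$ in the ${\mathcal S}^{\mu}$ topology is automatically a Nash embedding. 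Next, apply Corollary \ref{approxderrem} to this $\veps$ to obtain a strictly positive continuous semialgebraic function $\eps:M_0\to\R$ such that whenever $\eta\in{\mathcal S}^{\mu}(M_0)$ is $\eps$-close to $0$ in the ${\mathcal S}^{\mu}$ topology, $\Sigma_{\eta}$ is $\veps$-close to ${\tt j}_{M_0}$. Finally, Lemma \ref{zeroNash} produces a strictly positive Nash function $\delta:M_0\to(0,1)$ that is $\eps$-close to zero in the ${\mathcal S}^{\mu}$ topology.

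Define $\Psi(x,t):=\Sigma(x,t\delta(x))$, which is Nash on $M_0\times[0,1]$ (note $t\delta(x)\in[0,1)$ lies in the domain of $\Sigma$). The decisive observation is that for every $t\in[0,1]$, every multi-index $\alpha$ with $|\alpha|\leq\mu$, and every $x\in M_0$,
\[
|D^{(\alpha)}(t\delta)(x)|=t\,|D^{(\alpha)}\delta(x)|\leq|D^{(\alpha)}\delta(x)|<\eps(x),
\]
so $t\delta$ is itself $\eps$-close to zero in the ${\mathcal S}^{\mu}$ topology, uniformly in $t$. Applying Corollary \ref{approxderrem} with $\eta:=t\delta$ yields that $\Psi_t=\Sigma_{t\delta}$ is $\veps$-close to ${\tt j}_{M_0}$ in the ${\mathcal S}^{\mu}$ topology for every $t\in[0,1]$, which is exactly item (iii). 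By the initial choice of $\veps$ and the fact that each $\Psi_t$ is Nash, this closeness forces $\Psi_t$ to be a Nash embedding for every $t\in(0,1]$; for $t=0$, $\Psi_0(x)=\Sigma(x,0)=x$, so $\Psi_0={\tt j}_{M_0}$ is trivially a Nash embedding. This gives (ii) and the first clause of (i).

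For the remaining clause of (i), fix $t\in(0,1]$ and $x\in\Qq$. Since $\delta(x)\in(0,1)$ we have $t\delta(x)\in(0,1]$, and because $\Sigma$ extends $\sigma:\Qq\times[0,1]\to\Qq$, we obtain $\Psi_t(x)=\Sigma(x,t\delta(x))=\sigma(x,t\delta(x))$, which lies in $\Int(\Qq)$ by Theorem \ref{push}. No single step here is technically delicate; the content of the argument is really the order of quantifiers, and the one point worth emphasizing as the main obstacle is the uniformity in the parameter $t$. That uniformity is obtained essentially for free thanks to the inequality $|D^{(\alpha)}(t\delta)|\leq|D^{(\alpha)}\delta|$ on $[0,1]$, which allows a single Nash function $\delta$ to work simultaneously for all parameter values.
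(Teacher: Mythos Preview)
Your proof is correct and follows essentially the same approach as the paper: apply Corollary \ref{approxderrem} (together with the openness of embeddings from \S\ref{emb}) to obtain a threshold $\eps$, use Lemma \ref{zeroNash} to find a Nash $\delta$ that is $\eps$-close to zero, and then observe that $t\delta$ remains $\eps$-close to zero for every $t\in[0,1]$ since scaling by a constant $t\in[0,1]$ can only shrink derivatives. The only cosmetic point is that the ${\mathcal S}^\mu$ topology on the Nash manifold $M_0$ is defined via tangent fields $\xi_{i_1}\cdots\xi_{i_\ell}$ rather than Euclidean partials $D^{(\alpha)}$, but your inequality $|\xi_{i_1}\cdots\xi_{i_\ell}(t\delta)|=t\,|\xi_{i_1}\cdots\xi_{i_\ell}(\delta)|\leq|\xi_{i_1}\cdots\xi_{i_\ell}(\delta)|$ holds just the same.
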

\begin{proof}
When we say that two functions are `close', we mean that they are close with respect to the corresponding ${\mathcal S}^{\mu}$ topology. For each semialgebraic function $\eta:M_0\to(0,1)$ let $\Sigma_{\eta}$ be the map introduced in \eqref{Sigmadelta}. By Corollary \ref{approxderrem} there exists a strictly positive continuous semialgebraic function $\eps:M_0\to\R$ such that each Nash function $\eta:M_0\to(0,1)$ that is $\eps$-close to zero satisfies that $\Sigma_{\eta}$ is a Nash embedding $\veps$-close to the inclusion map ${\tt j}_{M_0}:M_0\hookrightarrow M$. By Lemma \ref{zeroNash} there exists a Nash function $\delta:U\to(0,1)$ that is $\eps$-close to zero. We deduce that $t\delta$ is $\eps$-close to zero as well for each $t\in [0,1]$. Thus, $\Psi_t:=\Sigma_{t\delta}:M_0\to M$ is a Nash embedding and $\veps$-close to the inclusion map ${\tt j}_{M_0}$ for each $t\in[0,1]$. In addition, $\Psi_0={\tt j}_{M_0}$ and by Theorem \ref{push} $\Psi_t(\Qq)=\sigma_{t\delta}(\Qq)\subset\Int(\Qq)$ for each $t\in(0,1]$, because $\im(t\delta)\subset(0,1)$ for each $t\in[0,1]$, as required.
\end{proof}
\begin{remark}
The Nash map
$$
\widehat{\Sigma}:M_0\times[0,1]\to M\times[0,1],\ (x,t)\mapsto(\Sigma(x,t\delta(x)),t)
$$
is a Nash embedding, if the strictly positive Nash function $\delta:M_0\to(0,1)$ is close enough to $0$ with respecto to the ${\mathcal S}^\mu$ topology.\hfill$\sqbullet$
\end{remark}

As a straightforward consequence of Corollary \ref{approxderrem} and Theorem \ref{pushdiffeo} we have the following:

\begin{cor}\label{approxder}
Let $\veps:\Qq\to\R$ be a strictly positive continuous semialgebraic function and let $\mu\geq 0$ be an integer. Then there exists a Nash function $\delta:M_0\to(0,1)$ such that the Nash map 
$$
\sigma_{\delta|_\Qq}:\Qq\to\Int(\Qq),\ x\mapsto\sigma(x,\delta(x))
$$ 
is a Nash embedding that is $\veps$-close to the identity map $\id_{\Qq}$ with respect to the ${\mathcal S}^{\mu}$ topology. 
\end{cor}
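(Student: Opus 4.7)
The plan is to deduce this corollary directly from Theorem~\ref{pushdiffeo} by specializing the one-parameter family at $t=1$ and then restricting to $\Qq$. The only delicate point is translating a closeness hypothesis on $M_0$ into the required closeness hypothesis on $\Qq$, which will be handled through the continuity of the restriction map in the ${\mathcal S}^{\mu}$ topology.

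First I would upgrade the given function $\veps:\Qq\to\R$ to a strictly positive continuous semialgebraic function $\widetilde{\veps}:M_0\to\R$ with the property that any ${\mathcal S}^{\mu}$ map $g:M_0\to M$ which is $\widetilde{\veps}$-close to the inclusion ${\tt j}_{M_0}:M_0\hookrightarrow M$ (in the ${\mathcal S}^{\mu}$ topology of ${\mathcal S}^{\mu}(M_0,M)$) has a restriction $g|_{\Qq}$ that is $\veps$-close to $\id_{\Qq}={\tt j}_{M_0}|_{\Qq}$ (in the ${\mathcal S}^{\mu}$ topology of ${\mathcal S}^{\mu}(\Qq,M)$). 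Such a $\widetilde{\veps}$ exists because $\Qq$ is closed in $M_0$, so by the description of the ${\mathcal S}^{\mu}$ topology recalled in \S\ref{esmm} the restriction homomorphism ${\mathcal S}^{\mu}(M_0,M)\to{\mathcal S}^{\mu}(\Qq,M)$ is continuous; alternatively one can produce $\widetilde{\veps}$ by a semialgebraic Tietze-type extension of $\veps$ to $M_0$ and then shrink it if needed.

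Next I would apply Theorem~\ref{pushdiffeo} to $\Qq\subset M$ with $\widetilde{\veps}$ in the role of $\veps$. This supplies a Nash function $\delta:M_0\to(0,1)$ and the Nash map $\Psi:M_0\times[0,1]\to M$, $\Psi_t(x)=\Sigma(x,t\delta(x))$, such that each $\Psi_t$ is a Nash embedding $\widetilde{\veps}$-close to ${\tt j}_{M_0}$ in the ${\mathcal S}^{\mu}$ topology, and in addition $\Psi_t(\Qq)\subset\Int(\Qq)$ for every $t\in(0,1]$. Specializing at $t=1$, and using that $\Sigma$ extends $\sigma$ on $\Qq\times[0,1]$, we obtain
$$
\Psi_1|_{\Qq}(x)=\Sigma(x,\delta(x))=\sigma(x,\delta(x))=\sigma_{\delta|_{\Qq}}(x),
$$
so $\sigma_{\delta|_{\Qq}}=\Psi_1|_{\Qq}$ takes $\Qq$ into $\Int(\Qq)$. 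Since $\Qq$ is closed in $M_0$ and $\Psi_1:M_0\to M$ is a Nash embedding, the restriction $\sigma_{\delta|_{\Qq}}:\Qq\to\Int(\Qq)$ is again a Nash embedding, and by the choice of $\widetilde{\veps}$ it is $\veps$-close to $\id_{\Qq}$ in the ${\mathcal S}^{\mu}$ topology, which is exactly the required conclusion.

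I do not expect any real obstacle here: Theorem~\ref{pushdiffeo} already contains the substantive geometric content (existence of the Nash function $\delta$ and the Nash embedding property of $\Psi_t$), and the rest is the bookkeeping step of passing from a closeness estimate on the ambient open semialgebraic set $M_0$ to one on the closed subset $\Qq$, handled by continuity of restriction in the ${\mathcal S}^{\mu}$ topology.
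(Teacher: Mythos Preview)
Your proposal is correct and matches the paper's approach: the paper records Corollary~\ref{approxder} as a straightforward consequence of Corollary~\ref{approxderrem} and Theorem~\ref{pushdiffeo} without further proof, and your argument is exactly the specialization at $t=1$ and restriction to $\Qq$ that this amounts to. The only caveat is that Theorem~\ref{pushdiffeo} is stated for $\mu\geq 1$ while the corollary allows $\mu\geq 0$, but the case $\mu=0$ follows by applying the theorem with $\mu=1$, since ${\mathcal S}^{1}$-closeness implies ${\mathcal S}^{0}$-closeness.
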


\section{Proofs of the main results}\label{s6}

In this section we prove Theorems \ref{thm1} and \ref{thm2}. We present first the proof of Theorems \ref{thm1} and to lighten the proof of Theorem \ref{thm2} we present before some preliminary results.

\subsection{Nash approximation when the target space is a Nash manifold with corners}

We use the techniques developed in the previous section to improve \cite[Thm.1.5]{cf3}. Fix an integer $\mu\geq 0$. In the following when we say that two functions are `close', we mean that they are close with respect to the corresponding ${\mathcal S}^{\mu}$ topology. We are ready to prove Theorem \ref{thm1}.

\begin{proof}[Proof of Theorem \em \ref{thm1}]
The proof is conducted in two steps:

\noindent{\sc Step 1.} Let $\veps:\Ss\to\R$ be a strictly positive continuous semialgebraic function. Let 
$$
\sigma:\Qq\times[0,1]\to\Qq,\ (x,t)\mapsto\sigma_t(x)
$$
be the Nash map introduced in Theorem \ref{push}. By Proposition \ref{leftcomp} the map 
$$
\sigma_*:{\mathcal S}^{\mu}(\Ss,\Qq\times [0,1])\to{\mathcal S}^{\mu}(\Ss,\Qq),\ (h,\eta)\mapsto\sigma\circ(h,\eta)
$$ 
is continuous with respect to the ${\mathcal S}^{\mu}$ topology. Thus, there exists a strictly positive continuous semialgebraic function $\eps:\Ss\to\R$ such that if $(h,\eta)\in{\mathcal S}^{\mu}(\Ss,\Qq\times [0,1])$ is an ${\mathcal S}^{\mu}$ map that is $\eps$-close to $(f,0)$, then $\sigma\circ(h,\eta)$ is $\tfrac{\veps}{2}$-close to 
$$
\sigma\circ(f,0)=\id_{\Qq}\circ f=f.
$$
As $\Ss$ is a locally compact semialgebraic subset of $\R^m$, the boundary $\delta^\bullet\Ss:=\cl(\Ss)\setminus\Ss$ is a closed semialgebraic subset of $\R^m$, so $\Ss$ is closed in $U:=\R^m\setminus\delta^\bullet\Ss$. Let $\eps^*:U\to\R$ be a strictly positive continuous semialgebraic extension of $\eps$. By Lemma \ref{zeroNash} there exists a Nash function $\delta:U\to(0,1)$ which is $\eps^*$-close to zero, so $\sigma\circ(f,\delta|_{\Ss})$ is $\tfrac{\veps}{2}$-close to $f$. As $\delta(x)>0$ for each $x\in\Ss$ and by Theorem \ref{push} $\sigma_t(x)\in\Int(\Qq)$ for each $(x,t)\in\Qq\times (0,1]$, we deduce that $\sigma(f(x),\delta(x))\subset\Int(\Qq)$ for each $x\in\Ss$. After substituting $f$ by $\sigma\circ(f,\delta|_{\Ss})$ and $\veps$ by $\tfrac{\veps}{2}$, we may assume: $f(\Ss)\subset\Int(\Qq)$.

\noindent{\sc Step 2.} As $f\in\Ss^{\mu}(\Ss,\Int(\Qq))$, there exists an open semialgebraic neighborhood $V$ of $\Ss$ in $\R^m$ such that $f$ extends to an ${\mathcal S}^{\mu}$ map $F:V\to\R^n$. Let $(\Omega,\nu)$ be a Nash tubular neighborhood of $\Int(\Qq)$ in $\R^n$, see \cite[Cor.8.9.5]{bcr}. As $\Omega$ is an open semialgebraic neighborhood of $\Int(\Qq)$ in $\R^n$, then $F^{-1}(\Omega)$ is an open semialgebraic neighborhood of $\Ss$ in $V$. After substituting $V$ by $V\cap F^{-1}(\Omega)$ if necessary, we may assume that $F(V)\subset\Omega$. As $\Ss$ is closed in $U$, we deduce $\Ss$ is closed in $U\cap V$. After replacing $V$ by $V\cap U$ if necessary, we may assume that $\Ss$ is closed in $V$. Thus, $F:V\to\Omega$ is an ${\mathcal S}^{\mu}$ extension of $f:\Ss\to\Int(\Qq)$. As $\Ss$ is closed in $V$, there exists by \cite[Prop.2.6.9]{bcr} a strictly positive continuous semialgebraic extension $\veps^*:V\to\R$ of $\veps$. 

By Proposition \ref{leftcomp} the map
$$
\nu_*:{\mathcal S}^{\mu}(V,\Omega)\to{\mathcal S}^{\mu}(V,\Int(\Qq)),\ G\mapsto \nu \circ G
$$
is continuous with respect to the ${\mathcal S}^{\mu}$ topology. In particular, there exists a strictly positive continuous semialgebraic function $\eps^*:V\to\R$ such that if $G\in{\mathcal S}^{\mu}(V,\Omega)$ is $\eps^*$-close to $F$, then $\nu\circ G$ is $\veps^*$-close to $\nu\circ F$. As $V$ and $\Int(\Qq)$ are Nash manifolds, there exists by \cite[II.4.1]{sh} a Nash map $G:V\to\Omega$ that is $\eps^*$-close to $F$. Thus, the Nash map $g:=\nu\circ G|_{\Ss}:\Ss\to\Int(\Qq)$ is $\veps$-close to $\nu\circ F|_{\Ss}=f$, as required.
\end{proof}

\subsection{Nash equations of Nash subsets close to zero}

In the proof of Theorem \ref{thm2} we need Nash equations of a Nash subset $X$ of a Nash manifold $N$ that are close to zero with respect to the ${\mathcal S}^{\mu}$ topology. Let $\mu\geq 0$ be an integer. When we say that two functions are `close', we mean that they are close with respect to the corresponding ${\mathcal S}^{\mu}$ topology

\begin{prop}[Nash equations of Nash subsets close to zero]\label{niceequations}
Let $N\subset\R^m$ be a Nash manifold and $X\subset N$ a Nash subset. Then, for each integer $\mu\geq 0$ there exist Nash functions $\varphi:N\to [0,1)$ arbitrarily close to zero with respect to the ${\mathcal S}^{\mu}$ topology and such that $X=\{\varphi=0\}$. 
\end{prop}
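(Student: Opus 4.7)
\textbf{Proof plan for Proposition \ref{niceequations}.} The plan is to produce a Nash equation of $X$ that is already a bounded nonnegative function, and then multiply it by a strictly positive Nash function that is itself arbitrarily close to $0$ in the ${\mathcal S}^\mu$ topology; continuity of multiplication in the topological ring ${\mathcal S}^\mu(N)$ will transport the smallness of the multiplier to the whole product.

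First, I would produce a bounded Nash equation of $X$. Since $X$ is a Nash subset of the Nash manifold $N$, there exist finitely many Nash functions $f_1,\ldots,f_k\in\Nn(N)$ with $X=\{f_1=0,\ldots,f_k=0\}$. Set
\[
h:=f_1^2+\cdots+f_k^2\in\Nn(N),\qquad g:=\frac{h}{1+h}:N\to[0,1).
\]
Then $g$ is a Nash function on $N$, $g(x)\in[0,1)$ for every $x\in N$, $g\geq 0$ everywhere, and $\{g=0\}=\{h=0\}=X$. So $g$ is already a bounded nonnegative Nash equation of $X$; the only thing we still lack is the ${\mathcal S}^\mu$-smallness.

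Next, I would combine $g$ with a small Nash multiplier. Fix a strictly positive continuous semialgebraic function $\veps:N\to(0,1)$ determining an ${\mathcal S}^\mu$ neighborhood of the zero function. Because ${\mathcal S}^\mu(N)$ is a topological ring, the multiplication-by-$g$ map
\[
\lambda_g:{\mathcal S}^\mu(N)\to{\mathcal S}^\mu(N),\qquad \delta\mapsto \delta\cdot g,
\]
is continuous, and $\lambda_g(0)=0$. Consequently there exists a strictly positive continuous semialgebraic function $\eps:N\to\R$ such that whenever $\delta\in{\mathcal S}^\mu(N)$ is $\eps$-close to $0$ with respect to the ${\mathcal S}^\mu$ topology, the product $\delta g$ is $\veps$-close to $0$ with respect to the ${\mathcal S}^\mu$ topology. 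Apply Lemma \ref{zeroNash} to $N$ with the control function $\eps$: there is a Nash function $\delta:N\to(0,1)$ which is $\eps$-close to $0$ in the ${\mathcal S}^\mu$ topology.

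Finally, I would set $\varphi:=\delta\cdot g$ and check the three required properties. The function $\varphi$ is Nash as a product of Nash functions; it is $\veps$-close to $0$ in the ${\mathcal S}^\mu$ topology by the choice of $\eps$; and since $0<\delta(x)<1$ and $0\leq g(x)<1$ we have $0\leq\varphi(x)<1$, so $\varphi(N)\subset[0,1)$. As $\delta$ is strictly positive on all of $N$, the zero set satisfies $\{\varphi=0\}=\{g=0\}=X$. The only non-routine step is the existence of a small multiplier $\delta$, which is precisely Lemma \ref{zeroNash}; there is no further obstacle because the continuity of $\lambda_g$ is built into the topological-ring structure of ${\mathcal S}^\mu(N)$.
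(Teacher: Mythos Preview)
Your proof is correct and follows essentially the same route as the paper's: construct a bounded nonnegative Nash equation $g$ of $X$ with values in $[0,1)$, then multiply it by a strictly positive Nash function $\delta:N\to(0,1)$ that is ${\mathcal S}^\mu$-close to $0$ (supplied by Lemma \ref{zeroNash}). The only difference is cosmetic: where you invoke the continuity of $\lambda_g:\delta\mapsto\delta g$ via the topological-ring structure of ${\mathcal S}^\mu(N)$, the paper first passes to an open semialgebraic subset of $\R^m$ via a tubular neighborhood and then carries out the Leibniz estimate by hand to produce the explicit control function $\veps^*=\veps/(\max\{m,2\}^{\mu+1}\max\{\eps,1\})$; that explicit computation is precisely the verification that $\lambda_g$ is continuous at $0$, so the two arguments coincide in substance.
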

\begin{proof}
Let $\veps:N\to\R$ be a strictly positive continuous semialgebraic function. After substituting $\veps$ by $\min\{\veps,1/2\}$ if necessary, we may assume $\veps<1$. After substituting $N$ by a Nash tubular neighborhood $(\Omega,\nu)$ of $N$ in $\R^m$, the Nash set $X$ by $\nu^{-1}(X)$ and $\veps$ by $\veps\circ \nu$, we may assume: \textit{$N$ is an open semialgebraic subset of $\R^m$.} Let $\psi:N\to\R$ be a Nash equation for $X$, that is, $X=\{\psi=0\}$. After substituting $\psi$ by $\frac{\psi^2}{1+\psi^2}$, we may assume $\psi(N)\subset[0,1)$. Define the non-negative continuous semialgebraic function
$$
\eps:N\to\R,\ x\mapsto\max_{0\leq|\alpha|\leq \mu}\{|D^{(\alpha)}\psi(x)|\}.
$$
Let $f:N\to\R$ be a Nash function. For each $x\in N$ we have by the multivariate Leibniz's rule \eqref{Leibniz} that
\begin{align}
\begin{split}\label{stimasmoothing}
|D^{(\alpha)}(\psi f)(x)|&\leq \displaystyle\sum_{\beta\leq\alpha}|D^{(\alpha-\beta)}\psi(x)||D^{(\beta)}f(x)|\\
&\leq \eps(x)\cdot\sum_{\beta\leq\alpha}|D^{(\beta)}f(x)|\leq \max\{\eps(x),1\}\cdot\displaystyle\sum_{\beta\leq \alpha}|D^{(\beta)}f(x)|
\end{split}
\end{align}
for each $\alpha\in\N^m$ such that $|\alpha|\leq \mu$. The number of multi-indices $\beta\in\mathbb{N}^m$ such that $\beta\leq\alpha$ is $m^{|\alpha|}$. Thus, if $|\alpha|\leq \mu$, the number of multi-indices $\beta\in\mathbb{N}^m$ such that $\beta\leq\alpha$ is $\leq m^{\mu}$. Define 
$$
\veps^*:=\frac{\veps}{\max\{m,2\}^{\mu+1} \max\{\eps,1\}},
$$
which satisfies $\veps^*<\veps$. By Lemma \ref{zeroNash} there exists a Nash function $f:N\to(0,1)$ thats is $\veps^*$-close to zero. By \eqref{stimasmoothing} we deduce that the Nash function $(f\psi):N\to[0,1)$ is $\veps$-close to zero. As $f$ is strictly positive, we deduce $\{f\psi=0\}=\{\psi=0\}=X$, as required. 
\end{proof}

\subsection{Proof of Theorem \ref{thm2}}
Before proving Theorem \ref{thm2} we need some preliminary results. Recall the definitions concerning distances between semialgebraic sets already presented in the Introduction. The following result holds similarly for arbitrary sets, but we prove it only in the semialgebraic set, because it is the chosen setting in this article.

\begin{lem}\label{dist}
Let $C\subset\R^n$ be a closed semialgebraic set such that $C=\cl(\Int(C))$. For each $x\in\Int(C)$ we have $\Bb_n(x,\dist(x,\partial C))\subset\Int(C)$, where $\partial C:=C\setminus\Int(C)$.
\end{lem}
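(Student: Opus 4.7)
The plan is to argue by contradiction using a standard line-segment connectedness argument, exploiting the closedness of $C$. First I would fix $x\in\Int(C)$ and set $r:=\dist(x,\partial C)$. Since $\partial C=C\setminus\Int(C)=C\cap(\R^n\setminus\Int(C))$ is the intersection of two closed sets, $\partial C$ is closed; as $x\in\Int(C)$ is disjoint from $\partial C$, we have $r>0$ (with the convention $r=+\infty$ when $\partial C=\varnothing$). In the degenerate case $\partial C=\varnothing$ the hypothesis $C=\cl(\Int(C))$ forces $C=\Int(C)$, hence $C$ is clopen in $\R^n$, so $C$ is either empty or equal to $\R^n$; since $x\in\Int(C)$, we must have $C=\R^n=\Int(C)$ and the inclusion is trivial. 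Hence I may assume $\partial C\neq\varnothing$ and $0<r<+\infty$.

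Now I would fix an arbitrary $y\in\Bb_n(x,r)$ and show $y\in\Int(C)$ by excluding the other two possibilities. If $y\in C\setminus\Int(C)=\partial C$, then $r=\dist(x,\partial C)\leq d(x,y)<r$, which is absurd. If instead $y\notin C$, consider the segment $\gamma(t):=x+t(y-x)$ for $t\in[0,1]$ and the set $T:=\{t\in[0,1]:\gamma(t)\in C\}$. Since $C$ is closed and $\gamma$ is continuous, $T$ is a closed subset of $[0,1]$ containing $0$ and missing $1$; therefore $t_0:=\sup T$ belongs to $T$ and satisfies $t_0<1$. Set $z:=\gamma(t_0)\in C$. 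For each $t\in(t_0,1]$ we have $\gamma(t)\notin C$, so $z$ is a limit of points of $\R^n\setminus C$, which prevents $z$ from lying in $\Int(C)$. Consequently $z\in C\setminus\Int(C)=\partial C$, and
\[
r=\dist(x,\partial C)\leq d(x,z)=t_0\,d(x,y)<d(x,y)<r,
\]
a contradiction. Hence $y\in\Int(C)$, which proves $\Bb_n(x,r)\subset\Int(C)$.

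I do not anticipate a real obstacle here: the only subtlety worth double-checking is that $z\notin\Int(C)$, which is the one place where closedness of $C$ (together with the sup definition of $t_0$) is used to guarantee that nearby points along $\gamma$ just past $t_0$ lie outside $C$. The hypothesis $C=\cl(\Int(C))$ enters only to handle the corner case $\partial C=\varnothing$; everything else is pure point-set topology and does not require semialgebraicity.
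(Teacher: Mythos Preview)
Your proof is correct and follows essentially the same line-segment argument as the paper: assume some $y\in\Bb_n(x,r)$ lies outside $\Int(C)$, locate a point $z\in\partial C$ on the segment from $x$ to $y$, and obtain $\dist(x,\partial C)\leq d(x,z)<r$, a contradiction. The only cosmetic difference is that you find $z$ as the last point of the segment in $C$ via a $\sup$ of a closed set, whereas the paper manufactures a continuous sign-function $g=\dist(\cdot,\partial C)\cdot f$ and invokes the intermediate value theorem; both are standard ways to extract the same boundary point. As you observe, the hypothesis $C=\cl(\Int(C))$ is in fact not used in either argument (even your corner case $\partial C=\varnothing$ follows directly from $\partial C=C\setminus\Int(C)$ without it).
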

\begin{proof}
Observe that $\{\R^n\setminus C,\partial C,\Int(C)\}$ is a semialgebraic partition of $\R^n$. Consider the bound semialgebraic function 
$$
f:\R^n\to\{-1,0,1\},\ x\mapsto
\begin{cases}
-1&\text{if $x\in\R^n\setminus C$},\\
0&\text{if $x\in\partial C$},\\
1&\text{if $x\in\Int(C)$},
\end{cases}
$$
which is continuous on $\R^n\setminus\partial C$. The continuous semialgebraic function $\dist(\cdot,\partial C)$ has zero set equal to $\partial C$, so $g:=\dist(\cdot,\partial C)\cdot f$ is a continuous semialgebraic function on $\R^n$ and $\{g<0\}=\R^n\setminus C$, $\{g=0\}=\partial C$ and $\{g>0\}=\Int(C)$. Pick $x\in\Int(C)$, write $0<r:=\dist(x,\partial C)$ and suppose $\Bb_n(x,r)\not\subset\Int(C)$. Pick $y\in\Bb_n(x,r)\cap(\R^n\setminus C)$ and let $L$ be the line that pass through $x$ and $y$. Consider the parametrization $h:\R\to L,\ t\mapsto x+t(y-x)$ and the composition $g\circ h:\R\to\R$, which values $g(x)>0$ at $t=0$ and $g(y)<0$ at $t=1$. Thus, there exists $t_0\in(0,1)$ such that $(g\circ h)(t_0)=0$, so $z:=h(t_0)\in\partial C$ belongs to the interior of the segment that connects $x$ and $y$. Consequently, $z\in\partial C\cap\Bb_n(x,r)$, so $\dist(x,\partial C)<r$, which is a contradiction. We deduce $\Bb_n(x,r)\subset\Int(C)$, as required. 
\end{proof}

\begin{lem}\label{qast}
Let $\Qq\subset\R^n$ be a $d$-dimensional Nash manifold with corners and let $M$ be a Nash envelope of $\Qq$ such that $\Qq$ is a closed subset of $M$, $M$ is a closed subset of $\R^n$ and the Nash closure $X$ of $\partial\Qq$ is a Nash normal crossings divisor of $M$ such that $X\cap\Qq=\partial\Qq$. Let $(\Ee,\eta)$ be a Nash tubular neighborhood of $M$ in $\R^n$ (as the one described in \cite[Prop.8.9.2 \& 8.9.3]{bcr}) and let $\tau:M\to\R$ be a strictly positive Nash function such that $T:=\{x\in\Ee:\ \|x-\eta(x)\|\leq(\tau\circ\eta)(x)\}$ is a closed subset of $\R^n$. Then $\Qq^*:=\{x\in\R^n:\ \eta(x)\in\Qq,\ \|x-\eta(x)\|^2\leq(\tau\circ\eta)^2(x)\}$ is a Nash manifold with corners and it is closed subset of $\R^n$. 
\end{lem}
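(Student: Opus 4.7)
The plan is to identify $\Qq^*$ with a closed Nash disk bundle of radius $\tau$ over $\Qq$ inside the normal bundle $\nu M$ of $M$, and then to show that such a disk bundle is locally Nash-diffeomorphic to a product of a standard Nash corner model with a closed Euclidean ball, hence a Nash manifold with corners of dimension $n$.

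For closedness, the preimage $\eta^{-1}(\Qq)$ is closed in $\Ee$, since $\Qq$ is closed in $M$ and $\eta$ is continuous, while $T$ is closed in $\R^n$ by hypothesis (in particular $T\subset\Ee$). Writing $\Qq^*=\eta^{-1}(\Qq)\cap T$, the set $\Qq^*$ is then a closed subset of $T$, hence closed in $\R^n$.

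For the Nash structure, the Nash tubular neighborhood $(\Ee,\eta)$ yields a Nash diffeomorphism
\[
\Phi:\Ee\to\Phi(\Ee)\subset\nu M,\quad x\mapsto(\eta(x),x-\eta(x)),
\]
onto an open semialgebraic neighborhood of the zero section of $\nu M=\{(p,v)\in M\times\R^n:v\perp T_pM\}$, identifying
\[
\Phi(\Qq^*)=\{(p,v)\in\nu M:p\in\Qq,\ \|v\|\le\tau(p)\}.
\]
Using the charts $u_i=(u_{i1},\dots,u_{id}):U_i\to\R^d$ supplied by Theorem \ref{corners0}, where $U_i\cap\Qq$ is either empty, all of $U_i$, or of the form $\{u_{i1}\ge0,\dots,u_{ik_i}\ge0\}$, the Nash vector subbundle $\nu M|_{U_i}$ of the trivial bundle $U_i\times\R^n$ admits a Nash orthonormal frame (obtained by Gram-Schmidt applied to a Nash basis of sections, available because $U_i$ is Nash-diffeomorphic to $\R^d$). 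This gives a norm-preserving Nash trivialization $\nu M|_{U_i}\cong U_i\times\R^{n-d}$, under which $\Phi(\Qq^*)\cap\nu M|_{U_i}$ becomes $\{(p,v)\in(U_i\cap\Qq)\times\R^{n-d}:\|v\|\le\tau(p)\}$, and the further Nash diffeomorphism $(p,v)\mapsto(p,v/\tau(p))$ identifies this with $(U_i\cap\Qq)\times\ol{\Bb}_{n-d}(0,1)$.

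Each local piece of $\Qq^*$ is thus a product of a standard Nash corner model in $\R^d$ with the closed unit ball in $\R^{n-d}$. The defining inequalities $u_{i1}\ge0,\dots,u_{ik_i}\ge0,\ 1-\|v\|^2\ge0$ have linearly independent gradients on their common zero set, since the first block depends only on $u$ and the last only on $v$, so these equations cut out a Nash normal-crossings divisor and the piece is a Nash manifold with divisorial corners of dimension $n$. The main obstacle will be to verify that these local descriptions glue coherently into a global Nash-corner structure on $\Qq^*$; but the transition maps are compositions of the chart transitions on $M$ (which are Nash diffeomorphisms of Nash corner models by Theorem \ref{corners0}), Nash changes of orthonormal frames in $\nu M$, and rescaling by the strictly positive Nash function $\tau$, all of which are Nash diffeomorphisms respecting the corner-defining inequalities, so the gluing is routine.
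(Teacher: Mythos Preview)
Your proof is correct and follows essentially the same route as the paper: pull $\Qq^*$ back to the normal bundle via the tubular-neighborhood diffeomorphism, trivialize locally with Gram--Schmidt orthonormal frames over the chart domains $U_i$, and check the normal-crossings condition by separating the base coordinates $u_{ij}$ from the fiber coordinate $\|v\|^2$; the closedness argument $\Qq^*=\eta^{-1}(\Qq)\cap T$ is identical. One remark: your final paragraph on gluing is unnecessary, since being a Nash manifold with corners is a local property of a semialgebraic subset of $\R^n$---the paper simply stops after verifying the local model, and your rescaling by $\tau$ (which the paper omits, working directly with $\tau^2(y)-\|u\|^2$) already makes that local check slightly cleaner.
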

\begin{proof}
Let $X_1,\ldots,X_s$ be the irreducible components of $X$. Let $h_i:M\to\R$ be a Nash equation of $X_i$ such that $d_ph_i:T_pM\to\R$ is surjective for each $p\in X_i$ and each $i=1,\ldots,s$ and $\Qq=\{h_1\geq0,\ldots,h_s\geq0\}$ (see \cite[Cor.2.12]{cf3}). Define $h_i^*:=h_i\circ\eta:\Ee\to\R$ for $i=1,\ldots,s$. Let $W:=\{(y,v)\in M\times\R^n:\ v\in(T_yM)^\bot\}$, let $\Pi:W\to M,\ (y,v)\mapsto y$ and let $\varphi:W\to\R^n,\ (y,v)\mapsto y+v$. By \cite[Prop.8.9.3]{bcr} there exist and open semialgebraic neighborhood $U\subset W$ such that $\varphi|_U:U\to\Ee$ is a Nash diffeormorphism and for each $(y,v)\in U$ and each $t\in[0,1]$ we have $(y,tv)\in U$. In addition, by \cite[Cor.8.9.5]{bcr} it holds $\eta=\Pi\circ(\varphi|_U)^{-1}$. Define $\Qq^\bullet:=\eta^{-1}(\Qq)=(\varphi|_U)\circ(\Pi^{-1}(\Qq)\cap U)$. As $\varphi|_U$ is a Nash diffeomorphism, $\Qq^\bullet$ is a Nash manifold with corners if and only if $\Pi^{-1}(\Qq)\cap U$ is a Nash manifold with corners. To that end it is enough to prove: {\em $\Pi^{-1}(\Qq)=\{h_1\circ\Pi\geq0,\ldots,h_s\circ\Pi\geq0\}$ is a Nash manifold with corners}. 

By \cite[Prop.8.9.2]{bcr} there exists an open semialgebraic covering $\{U_1,\ldots,U_m\}$ of $M$ and Nash diffeomorphims $\theta_i:U_i\times\R^{n-d}\to\Pi^{-1}(U_i)$ for $i=1,\ldots,m$ such that $\Pi\circ\theta_i:U_i\times\R^{n-d}\to U_i$ is the projection onto the first factor and $\theta_i|_{\{y\}\times\R^{n-d}}:\{y\}\times\R^{n-d}\to\{y\}\times(T_yM)^\bot$ is an $\R$-isomorphism for each $y\in U_i$. Using Gram-Schmidt's algorithm we may assume $\theta_i|_{\{y\}\times\R^{n-d}}$ is an $\R$-isometry for each $y\in U_i$. It is enough to check: {\em $\theta_i^{-1}(\Pi^{-1}(\Qq\cap U_i))=\{h_1\circ\Pi\circ\theta_i\geq0,\ldots,h_s\circ\Pi\circ\theta_i\geq0\}=(\Qq\cap U_i)\times\R^{n-d}$ is a Nash manifold with corners for each $i=1,\ldots,m$}, which is trivially true, because each intersection $(\Qq\cap U_i)$ is a Nash manifold with corners.

Define $h_{s+1}:\Ee\to\R,\ x\mapsto(\tau\circ\eta)^2(x)-\|x-\eta(x)\|^2$, which is a Nash function on $\Ee$, and $h_{s+1}^*:W\to\R,\ (y,v)\mapsto\tau^2(y)-\|v\|^2$, which is a Nash function on $W$ that satisfies $h_{s+1}=h_{s+1}^*\circ(\varphi|_U)^{-1}$, because $\eta\circ(\varphi|_U)=\Pi$ and $\varphi(y,v)-\eta\circ\Pi(y,v)=v$ for each $(y,v)\in W$. Thus, for each $(x,u)\in U_i\times\R^{n-d}$, we deduce 
$$
(h_{s+1}^*\circ\theta_i)(y,u)=\tau^2(y)-\|u\|^2,
$$
because $\theta_i|_{\{y\}\times\R^{n-d}}$ is an $\R$-isometry for each $y\in U_i$ and each $i=1,\ldots,m$. Let us check: {\em 
\begin{multline*}
((\Qq\cap U_i)\times\R^{n-d})\cap\{h_{s+1}^*\circ\theta_i\geq0\}\\
=\{(y,u)\in U_i\times\R^{n-d}:\ h_1(y)\geq0,\ldots,h_s(y)\geq0,\tau^2(y)-\|u\|^2\geq0\}
\end{multline*} 
is a Nash manifold with corners}. 

It is enough to show that $(X\times\R^{n-d})\cup\{(y,u)\in U_i\times\R^{n-d}:\ \tau^2(y)-\|u\|^2=0\}$ is a Nash normal crossing divisor. This holds, because $\frac{\partial h_j}{\partial u_k}(y)=0$ and $\frac{\partial(h_{s+1}^*\circ\theta_i)}{\partial u_k}(y,u)=-2u_k$ for each $k=1,\ldots,n-d$ and each $j=1,\ldots,s$ and $\|u\|^2=\tau^2(y)>0$ for each $(y,u)\in U_i\times\R^{n-d}$ such that $(h_{s+1}^*\circ\theta_i)(y,u)=0$.

Putting all the previous information together we deduce that $\Qq^*$ is a Nash manifold with corners. To finish observe that as $\Qq$ is a closed subset of $M$, the inverse image $\eta^{-1}(\Qq)$ is a closed subset of $\Ee$, so $\eta^{-1}(\Qq)\cap T$ is a closed subset of $T$. As $T$ is a close subset of $\R^n$ and $\Qq^*=\eta^{-1}(\Qq)\cap T$, we conclude that $\Qq^*$ is a closed subset of $\R^n$, as required.
\end{proof}

We are ready to prove Theorem \ref{thm2}. When we say that two functions are `close' we mean that they are close with respect to the corresponding ${\mathcal S}^{\mu}$ topology.

\begin{proof}[Proof of Theorem \em \ref{thm2}]
The proof is conducted into several steps:

\noindent{\sc Step 1. Initial reduction.}
Let us check that we may assume: {\em $\Qq\subset\R^n$ is a closed subset of $\R^n$ with non-empty interior in $\R^n$.}

Let $M$ be a Nash envelope of $\Qq$ in $\R^n$ such that $\Qq$ is closed in $M$ and the Nash closure $Y$ of $\partial\Qq$ in $M$ is a Nash normal-crossings divisor such that $\Qq\cap Y=\partial\Qq$ (Theorem \ref{divisorialPre}). By Mostowski's trick (Proposition \ref{Mos}) we may assume that $M$ is a closed subset of $\R^n$. Let $(\Ee,\eta)$ be a Nash tubular neighborhood of $M$ in $\R^n$ provided by \cite[Prop.8.9.2 \& 8.9.3]{bcr}. We claim: {\em After shrinking $N$ if necessary, there exists an $\mathcal{S}^{\mu}$ extension $F:N\to\Ee$ of $f:\Ss\to\Qq$ to $N$ that is Nash on an open semialgebraic neighbourhod $U_0\subset U$ of $X\cap\Ss$.}

By Proposition \ref{Mos} we may assume that $N$ is a closed Nash submanifold of $\R^m$. As $f$ is Nash on $\Ss\cap U$, there exists an open semialgebraic neighborhood $U_1\subset U$ of $\Ss\cap U$ and a Nash extension $F_1:U_1\to \R^n$ of $f|_{\Ss\cap U}:\Ss\cap U\to\Qq$ to $U_1$. Analogously, as $f$ is $\mathcal{S}^{\mu}$ on $\Ss$, there exists an open semialgebraic neighborhood $U_2\subset N$ of $\Ss$ and an $\mathcal{S}^{\mu}$ extension $F_2:U_2\to\R^n$ of $f:\Ss\to\Qq$ to $U_2$. After substituting $N$ by $U_2$, we may assume that $F_2:N\to \R^n$. As $N$ is closed in $\R^n$ and both $\Ss$ and $X$ are closed in $N$, also $\Ss\cap X$ is closed in $\R^n$. Let $U_0\subset U_1$ be an open semialgebraic neighbourhood of $\Ss\cap X$ in $U_1$ such that $\cl(U_0)\subset U_1$. Let $\{\theta_1,\theta_2\}$ be an $\mathcal{S}^{\mu}$ partition of unity subordinated to the covering $\{U_1,N\setminus\cl(U_0)\}$. Define $F:=\theta_1 F_1+\theta_2 F_2:N\to \R^n$, which is an $\mathcal{S}^{\mu}$ extension of $f$ to $N$. After shrinking $N$ if necessary, we may assume $F(N)\subset\Ee$. As $\cl(U_0)\subset U_1$, then $F=F_1$, on $U_0$. We conclude that $F$ is Nash on $U_0$, as claimed.

Consider the strictly positive continuous semialgebraic function $\dist(\cdot,\R^n\setminus\Ee):M\to(0,+\infty)$. By \cite[\S1]{ef} there exists a Nash function $\tau:M\to\R$ such that $0<\frac{1}{4}\dist(\cdot,\R^n\setminus\Ee)<\tau<\frac{3}{4}\dist(\cdot,\R^n\setminus\Ee)$, so in particular $\tau$ is also strictly positive. Recall that by \cite[Cor.8.9.5]{bcr} $\dist(x,M)=\|x-\eta(x))\|$ for each $x\in\Ee$. Let us check: {\em $T:=\{x\in\Ee:\ \|x-\eta(x)\|\leq\tau(x)\}$ is a closed subset of $\R^n$.} Once this is proved, Lemma \ref{qast} implies that $\Qq^*:=\{x\in\R^n:\ \eta(x)\in\Ee,\ \|x-\eta(x)\|^2\leq(\tau\circ\eta)^2(x)\}$ is a Nash manifold with corners and it is a closed subset of $\R^n$.

Pick a point $y\in\cl(T)$. By the curve selection lemma \cite[Thm.2.5.5]{bcr} there exists a continuous semialgebraic arc $\alpha:(-1,1)\to\R^n$ such that $\alpha(0)=y$ and $\alpha((0,1))\subset T$. We have $\dist(\alpha(t),M)=\|\alpha(t)-\eta(\alpha(t)))\|\leq\tau(\alpha(t))<\frac{3}{4}\dist(\alpha(t),\R^n\setminus\Ee)$ for each $t\in(0,1)$. Thus, $\dist(y,M)=\dist(\alpha(0),M)\leq\frac{3}{4}\dist(\alpha(0),\R^n\setminus\Ee)=\frac{3}{4}\dist(y,\R^n\setminus\Ee)$. We deduce $\dist(y,\R^n\setminus\Ee)>0$, because otherwise $\dist(y,M)=0$ and $\dist(y,\R^n\setminus\Ee)=0$, so $y\in M\cap(\R^n\setminus\Ee)=\varnothing$, which is a contradiction. Thus, $y\in\Ee$, so $y\in\cl(T)\cap\Ee=T$, because $T$ is a closed subset of $\Ee$. We conclude $T$ is a closed subset of $\R^n$.

Recall that $\Ss$ is a closed semialgebraic subset of $N$, so each strictly positive continuous semialgebraic function $\Ss\to \R$ extends to a strictly positive continuous semialgeraic function $N\to \R$. In particular, we may fix a strictly positive continuous semialgebraic function $\veps:N\to\R$. By Proposition \ref{leftcomp} the map $\eta_*:{\mathcal S}^{\mu}(N,\Ee)\to{\mathcal S}^{\mu}(N,M),\ H\mapsto\eta\circ H$ is continuous. Thus, there exists a strictly positive continuous semialgebraic function $\delta:N\to\R$ such that if $H\in{\mathcal S}^{\mu}(N,\Ee)$ is $\delta$-close to $F$ and $H(\Ss)\subset\eta^{-1}(\Qq)$, then $\eta\circ H$ is $\veps$-close to $\eta\circ F$ and $\eta\circ H|_\Ss:\Ss\to\Qq$ is $\veps|_\Ss$-close to $\eta\circ f=f:\Ss\to\Qq$. 

We substitute $M$ by $\Ee$, $X$ by $\eta^{-1}(X)$, $\Qq$ by $\Qq^*$ and $\veps$ by $\delta$, so we may assume: {\em $M$ is an open subset of $\R^n$ and $\Qq$ is a closed subset of $\R^n$ with non-empty interior in $\R^n$ such that $\cl(\Int(\Qq))=\Qq$} (in particular, Lemma \ref{dist} applies to $\Qq$).

\noindent{\sc Step 2. Reduction to the case of a nice intersection with the boundary.} Let $\varphi:N\to [0,1)$ be a Nash equation in $N$ of the Nash set $X$. Let 
\begin{align*}
&\sigma:\Qq\times [0,1]\to\Qq,\ (x,t)\mapsto\sigma_t(x):=\sigma(x,t),\\
&\Sigma:M_0\times [0,1]\to M,\ (x,t)\mapsto\Sigma_t(x):=\Sigma(x,t)
\end{align*}
be the Nash maps introduced in Theorem \ref{push}, where $M_0\subset M$ is and open semialgebraic neighborhood of $\Qq$ and $\sigma=\Sigma|_{\Qq\times[0,1]}$. Recall that $\sigma_0=\id_{\Qq}$ and $\sigma_t(\Qq)\subset\Int(\Qq)$ for each $t\in(0,1]$. After shrinking $N$ is necessary, we may assume $F(N)\subset M_0$.

As $\varphi(x)\in [0,1)$ for each $x\in N$, the composition 
$$
\Sigma_\varphi\circ F:N\to M,\ x\mapsto\Sigma_{\varphi(x)}(F(x)):=\Sigma(F(x),\varphi(x))
$$
is a well-defined ${\mathcal S}^{\mu}$ map that is Nash on $U_0$ and $\sigma_{\varphi|_\Ss}\circ f=\Sigma_{\varphi|_\Ss}\circ F|_\Ss:\Ss\to\Qq$. 

By Proposition \ref{leftcomp} the map 
$$
\sigma_*:{\mathcal S}^{\mu}(N,M_0\times [0,1])\to{\mathcal S}^{\mu}(N,M),\ (H,\rho)\mapsto\sigma\circ(H,\rho)
$$ 
is continuous. Thus, there exists a strictly positive continuous semialgebraic function $\eps_0:N\to\R$ striclty smaller than $\frac{1}{2}$ such that if $(H,\rho)\in{\mathcal S}^{\mu}(N,M_0\times [0,1])$ is an ${\mathcal S}^{\mu}$ map that is $\eps_0$-close to $(F,0)$ and $H(\Ss)\subset\Qq\times[0,1]$, then $\Sigma_{\rho}\circ H=\Sigma\circ(H,\rho)$ is $\tfrac{\veps}{2}$-close to 
$$
\Sigma_0\circ F=\Sigma\circ(F,0)={\tt j}_{M_0}\circ F=F,
$$
where ${\tt j}_{M_0}:M_0\hookrightarrow M$ is the inclusion, and $\sigma_{\rho|_\Ss}\circ H|_\Ss:\Ss\to\Qq$ is $\tfrac{\veps}{2}$-close to $\sigma_0\circ f=\sigma\circ(f,0)=f$. By Proposition \ref{niceequations} there exists a Nash equation $\varphi_0:N\to [0,1)$ for the Nash set $X$ that is $\eps_0$-close to zero. In particular, $\Sigma_\varphi\circ F$ is $\frac{\veps}{2}$-close to $F$ and if $x\in X$, then $\Sigma_\varphi\circ F(x)=\Sigma(F(x),0)=F(x)$. We claim: $(\Sigma_\varphi\circ F)^{-1}(\partial\Qq)\cap\Ss\subset\varphi^{-1}(0)=X$.

If $x\in\Ss$ satisfies $\varphi(x)\neq0$, then $F(x)=f(x)\in\Qq$ and $\Sigma(F(x),\varphi(x))=\sigma(f(x),\varphi(x))\in\Int(\Qq)$ (because $\sigma(\Qq)\subset\Int(\Qq)$ for each $t\in(0,1]$). Thus, $(\Sigma_\varphi\circ F)^{-1}(\partial\Qq)\cap\Ss\subset\varphi^{-1}(0)=X$, as claimed.

After substituting $F$ by $\Sigma_\varphi\circ F$ and $\veps$ with $\tfrac{\veps}{2}$, we may assume: \textit{$f^{-1}(\partial\Qq)=F^{-1}(\partial\Qq)\cap\Ss\subset X\cap\Ss$.}

\noindent{\sc Step 3. Construction of the approximating function.} 
After the substitution done in {\sc Step 2}, we have $f^{-1}(\partial\Qq)\subset X\cap\Ss$. Thus,
$$
\{x\in\Ss:\ \dist (f(x),\partial\Qq)=0\}=f^{-1}(\partial\Qq)\subset X\cap\Ss=\{x\in\Ss:\ \varphi_0(x)=0\}.
$$
By \cite[Thm.2.6.6]{bcr} and its proof there exist an integer $p_0\geq 0$ and a continuous semialgebraic function $h:\Ss\to\R$ such that 
\begin{itemize}
\item $X\cap\Ss=\{h=0\}$,
\item $\varphi_0^{p_0}(x)=h(x)\dist(f(x),\partial\Qq)$ for each $x\in\Ss$.
\end{itemize}
In particular, $W:=\{x\in\Ss:\ |h(x)|<1\}$ is an open semialgebraic neighborhood of $X\cap\Ss$ in $\Ss$. As $f^{-1}(\partial\Qq)\subset X\cap\Ss$ and $W$ is an open semialgebraic neighborhood of $X\cap\Ss$ in $\Ss$, if $\Ss\setminus W\neq \varnothing$, the continuous semialgebraic function 
$$
\eps_1^*:\Ss\setminus W\to\R,\ x\mapsto \dist(f(x),\partial\Qq)
$$
is strictly positive. Let $\eps_1:N\to\R$ be a strictly positive continuous semialgebraic extension of $\eps_1^*$ (defined on the closed semialgebraic subset $\Ss\setminus W$ of $N$) to $N$. If $N\setminus W=\varnothing$ we set $\eps_1\equiv 1$. By Proposition \ref{niceequations} and using generalized Leibniz's formula \eqref{Leibniz} there exists a Nash equation $\varphi_1:N\to [0,1)$ for the Nash set $X$ such that $|\varphi_1|<\frac{1}{2}$ and $\varphi:=\varphi_1\varphi_0^{p_0}$ is $\min\{\eps_0,\eps_1,\frac{1}{2}\}$-close to zero. By \cite[II.5.2]{sh} and its proof (recall that by {\sc Step 1} $M$ is open in $\R^n$) there exist
\begin{itemize}
\item Nash maps $G_0:N\to M$, $\Psi^*:N\to M$ and
\item an ${\mathcal S}^{\mu}$ map $\Psi:N\to M$,
\end{itemize}
such that 
\begin{itemize} 
\item[{\rm (i)}] $\Psi^*$ is $\min\{\veps,\frac{1}{2}\}$-close to $\Psi$,
\item[{\rm (ii)}] the Nash map $G:=G_0-\varphi\Psi^*=F+\varphi(\Psi-\Psi^*)$ satisfies $G(N)\subset M$.
\end{itemize}
In addition, $G(x)=F(x)$ for each $x\in X$, because $X=\{\varphi=0\}$. 

We claim: $G(\Ss)\subset\Qq$. We distinguish two cases:

\noindent{\sc Case 1.} Pick a point $x\in W\subset\Ss$ and recall that $M$ is an open subset of $\R^n$ (see {\sc Step 1}). We have
\begin{align*}
\|G(x)-f(x)\|&=|\varphi_1(x)\varphi_0^{p_0}(x)|\|\Psi(x)-\Psi^*(x)\|=|\varphi_1(x)||h(x)|\|\Psi(x)-\Psi^*(x)\|\dist(f(x),\partial\Qq)\\
&\leq\tfrac{1}{2}|h(x)|\dist(f(x),\partial\Qq)\leq\tfrac{1}{2}\dist(f(x),\partial\Qq),
\end{align*}
because $x\in W$. If $f(x)\in\partial\Qq$, then $G(x)=f(x)\in\partial\Qq\subset\Qq$. If $f(x)\not\in\partial\Qq$, then $f(x)\in\Int(\Qq)$. Thus, $G(x)\in\Bb_n(f(x),\dist(f(x),\partial\Qq))$. By Lemma \ref{dist} $G(x)\in\Int(\Qq)$.

\noindent{\sc Case 2.} Pick a point $x\in\Ss\setminus W$ and observe that
\begin{multline*}
\|G(x)-f(x)\|=\|G(x)-F(x)\|\\
=|\varphi_1(x)\varphi_0^{p_0}(x)|\|\Psi(x)-\Psi^*(x)\|<\tfrac{1}{2}\eps_1^*(x)=\tfrac{1}{2}\dist(f(x),\partial\Qq),
\end{multline*}
so $G(x)\in\Bb_n(f(x),\dist(f(x),\partial\Qq))$. Again by Lemma \ref{dist} $G(x)\in\Int(\Qq)$, as claimed.

In addition, $\|G-F\|=|\varphi|\|\Psi-\Psi^*\|\leq\frac{1}{2}\veps$ and $G(x)=F(x)$ for each $x\in X=\{\varphi=0\}$, as required.
\end{proof}

\section{Applications of the main results and further developments}\label{s7}

\subsection{Nash homotopies when the target space a Nash manifold with corners}

We make use of Theorem \ref{thm2} to analyze Nash homotopies between semialgebraic maps with values in a Nash manifold with corners. Let $\Ss\subset\R^m$ be a locally compact semialgebraic set and $\Qq\subset\R^n$ a Nash manifold with corners. We prove next Theorem \ref{homotopy2} and Corollaries \ref{homocor1} and \ref{homocor2} already stated in the Introduction.

\begin{proof}[Proof of Theorem \em\ref{homotopy2}]
Let $\veps:\Ss\times[0,1]\to\R$ be a strictly positive continuous semialgebraic function. By Lemma \ref{st} we may assume $\veps$ does not depend on variable $\t$, that is, $\veps:\Ss\to\R$. Recall that $\Phi:\Ss\times[0,1]\to\Qq$ is a ${\mathcal S}^\mu_{\tt t}$ homotopy between $f$ and $g$. By Proposition \ref{leftcomp} and \S\ref{ctwst}
$$
\Phi_*:{\mathcal S}^\mu_\t(\Ss\times[0,1],\Ss\times[0,1])\to{\mathcal S}^\mu_\t(\Ss\times[0,1],\Qq),\ H\mapsto \Phi\circ H
$$
is a continuous map with respect to the ${\mathcal S}^\mu_\t$ topology. Thus, there exists a strictly positive continuous semialgebraic function $\delta^*:\Ss\times[0,1]\to\R$ such that $\delta^*<\frac{1}{4}$ and if the continuous semialgebraic map $H:\Ss\times[0,1]\to\Ss\times[0,1]$ satisfies $\|(x,t)-H(x,t)\|<\delta^*(x,t)$ for each $(x,t)\in\Ss\times[0,1]$, then
$$
\|\Phi(x,t)-\Phi(H(x,t))\|<\tfrac{1}{2}\veps(x)
$$
for each $(x,t)\in\Ss\times[0,1]$. Again by Lemma \ref{st} we may assume $\delta^*$ does not depend on variable $\t$, that is, $\delta^*:\Ss\to\R$.

Let $0<\delta_0<\tfrac{1}{4}$ and consider the (strictly increasing) semialgebraic homeomorphism
$$
\eta^*_{\delta_0}:[\delta_0,1-\delta_0]\to [0,1],\ t\mapsto\frac{t-\delta_0}{1-2\delta_0}.
$$
Define the continuous semialgebraic function 
$$
\eta_{\delta_0}:[0,1]\to [0,1],\ t\mapsto \begin{cases}
0, &\text{ if } 0\leq t\leq \delta_0,\\
\eta^*_{\delta_0}(t), &\text{ if } \delta_0\leq t\leq 1-\delta_0,\\
1, &\text{ if } 1-\delta_0\leq t\leq 1.
\end{cases}
$$
It holds
\begin{equation}\label{stimamax}
\max_{t\in[\delta_0,1-\delta_0]}\{|t-\eta_{\delta_0}(t)|\}=\frac{\delta_0}{1-2\delta_0}\max_{t\in [\delta_0,1-\delta_0]}\{|1-2t|\}\leq\delta_0\quad\leadsto\quad\max_{t\in[0,1]}\{|t-\eta_{\delta_0}(t)|\}\leq\delta_0
\end{equation}

By Lemma \ref{derlem4} there exists an ${\mathcal S}^\mu$ function $\delta:\Ss\to\R$ that is arbitrarily close to zero with respect to the ${\mathcal S}^\mu$ topology. Let $\Delta:U\to\R$ be a strictly positive ${\mathcal S}^\mu$ extension of $\delta:\Ss\to\R$ to an open semialgebraic neighborhood $U\subset\R^n$ of $\Ss$ that is close to zero with respect to the ${\mathcal S}^\mu$ topology. By \eqref{stimamax} we have
$$
\|(x,t)-(x,\eta_{\delta(x)}(t))\|=|t-\eta_{\delta(x)}(t)|\leq \max_{t\in[0,1]}\{|t-\eta_{\delta(x)}(t)|\}\leq\delta(x).
$$
for each $(x,t)\in \Ss\times [0,1]$. By multivariate Leibniz's rule \eqref{Leibniz} we have
\begin{equation}
D^{(\alpha)}\Big(\frac{\Delta}{1-2\Delta}(1-2t)\Big)=(1-2t)\sum_{\beta\leq \alpha}\frac{\alpha!}{\beta!(\alpha-\beta)!}(D^{(\alpha)}\Delta) (D^{(\alpha-\beta)}((1-2\Delta)^{-1})).
\end{equation}
In addition, by multivariate Fa\`a di Bruno's formula \cite[Thm.2.1]{cs} we have
$$
D^{(\alpha-\beta)}((1-2\Delta)^{-1})=\sum_{k=1}^{|\alpha-\beta|}\frac{(-1)^kk!}{(1-2\Delta)^{k+1}}\sum_{s=1}^{|\alpha-\beta|}\sum_{p_s((\alpha-\beta),k)}(\alpha-\beta)!\prod_{j=1}^s\frac{((-2)D^{(\kappa_j)}(\Delta))^{\ell_j}}{(\ell_j!)\cdot(\kappa_j!)^{\ell_j}}
$$
where 
\begin{multline*}
p_s((\alpha-\beta),k):=\Big\{(\ell_1,\ldots,\ell_s;\kappa_1,\ldots,\kappa_s):\\ 
\ell_j>0,\ 0<\kappa_1<\cdots<\kappa_s,\ \sum_{j=1}^s\ell_j=k,\ \sum_{j=1}^s\ell_j\kappa_j=\alpha-\beta\Big\}.
\end{multline*}
Thus, if $\Delta$ is close enough to zero with respect to the ${\mathcal S}^\mu$ topology, we may assume $\frac{\delta}{1-2\delta}$ is $\delta^*$-close to zero with respect to the ${\mathcal S}^\mu$ topology and $\id_\Ss\times\eta_\delta:\Ss\times[0,1]\to\Ss\times[0,1]$ is $\delta^*$-close to $\id_{\Ss\times[0,1]}$ in the ${\mathcal S}^\mu_\t$ topology. Consequently, $\Phi$ is $\frac{\veps}{2}$-close to $\Phi^*:\Ss\times[0,1]\to\Qq,\ (x,t)\mapsto\Phi(x,\eta_{\delta(x)}(t))$.

Consider the Nash subset $\R^n\times\{0,1\}$ of $\R^n\times\R$ and observe that $\Phi^*|_{\{(x,t)\in \Ss\times[0,1]:\ t<\delta(x)\}}(x,t)=f(x)$ and $\Phi^*|_{\{(x,t)\in \Ss\times[0,1]:\ t>1-\delta(x)\}}(x,t)=g(x)$, so $\Phi^*$ is Nash on $\{(x,t)\in \Ss\times[0,1]:\ t<\delta(x) \text{ or } t>1-\delta(x)\}$, which is an open semialgebraic neighborhood of the Nash set $(\R\times\{0,1\})\cap\Ss\times[0,1]$ in $\Ss\times[0,1]$. By Theorem \ref{thm2} and Remark \ref{rtwst} there exists a Nash map $\Psi:\Ss\times[0,1]\to\Qq$ $\frac{\veps}{2}$-close to $\Phi^*:\Ss\times[0,1]\to\Qq$ with respect to the ${\mathcal S}^\mu_\t$ topology such that $\Psi(x,0)=\Phi(x,0)=f(x)$ and $\Psi(x,1)=\Phi(x,1)=g(x)$ for each $x\in \Ss$.

As $\Phi(x,t)-\Psi(x,t)=(\Phi-\Phi^*)+(\Phi^*-\Psi)$, we conclude that $\Psi$ is a Nash homotopy between $f$ and $g$ which is $\veps$-close to $\Phi$ with respect to the ${\mathcal S}^\mu_\t$ topology, as required.
\end{proof}
\begin{remark}
If the involved semialgebraic homotopy $\Phi:\Ss\times[0,1]\to\Qq$ between $f$ and $g$ is ${\mathcal S}^{\mu}$ for some $\mu\geq 1$, it is natural to wonder if it is possible to find an approximating Nash homotopy $\Psi$ close to $\Phi$ with respect to the ${\mathcal S}^{\mu}$ topology. In the proof of Theorem \ref{homotopy2} we have used Theorem \ref{thm2}. It seems difficult to apply our strategy to approximate homotopies with respect to the ${\mathcal S}^{\mu}$ topology when $\mu\geq 1$, because we need that the homotopy $\Phi$ is Nash on an open semialgebraic neighborhood $U$ of $\Ss\times\{0,1\}$ in $N\times\R$. To that end, we have to substitute $\Phi(x,t)$ by $\Phi(x,\eta(x,t))$, where $\eta:N\times[0,1]\to [0,1]$ is an ${\mathcal S}^{\mu}$ function such that $\tfrac{\partial\eta}{\partial\t}$ is zero when $0\leq t<\delta(x)$ or $1-\delta(x)<t\leq1$ for some strictly positive continuous Nash function $\delta:N\to\R$, so in general $\Phi(x,\eta(x,t))$ is not close to $\Phi(x,t)$ with respect to the ${\mathcal S}^{\mu}$ topology for $\mu\geq 1$. This is substantially why we have introduced the ${\mathcal S}^\mu_\t$ topology in \S\ref{ctwst}.\hfill$\sqbullet$
\end{remark}

We deduce from Theorem \ref{homotopy2} the proof of Corollary \ref{homocor1}.

\begin{proof}[Proof of Corollary \em\ref{homocor1}]
As $\Qq$ is a locally compact semialgebraic subset of $\R^n$, we may assume by Mostowski's trick (see Proposition \ref{Mos}) that $\Qq$ is a closed semialgebraic subset of $\R^n$. Under this assumption there exist by \cite{dk} an open semialgebraic neighborhood $U$ of $\Qq$ in $\R^n$ and a (continuous) semialgebraic retraction $\rho:\cl(U)\to\Qq$. As $\R^n\setminus U$ is closed and $f(\Ss)\subset U$, the continuous semialgebraic function
$$
\veps:\Ss\to\R,\ x\mapsto\tfrac{1}{2}\dist(f(x),\R^n\setminus U)
$$
is by \cite[Prop.2.2.8]{bcr} strictly positive. Consider the continuous semialgebraic homotopy
$$
\Phi:\Ss\times[0,1]\to\R^n,\ (x,t)\mapsto \Phi_t(x):=(1-t)f(x)+tg(x).
$$
If $\|f(x)-g(x)\|<\veps(x)$ for each $x\in\Ss$, then 
\begin{align}\label{homodist}
\begin{split}
\|f(x)-\Phi_t(x)\|&=\|f(x)-((1-t)f(x)+tg(x))\|\\
&=t\|f(x)-g(x)\|\leq\|f(x)-g(x)\|<\veps(x)=\tfrac{1}{2}\dist(f(x),\R^n\setminus U),
\end{split}
\end{align}
so $\Phi_t(x)\in U$ for each $(x,t)\in\Ss\times[0,1]$. Thus, $\Phi(\Ss\times[0,1])\subset U$ and $\rho\circ\Phi:\Ss\times [0,1]\to\Qq$ is a well-defined continuous semialgebraic homotopy between $f=\rho\circ\Phi(\cdot,0)$ and $g=\rho\circ\Phi(\cdot,1)$. By Theorem \ref{homotopy2} there exists a Nash homotopy $\Psi:\Ss\times[0,1]\to\Qq$ between $f$ and $g$, as required. 
\end{proof}

As a consequence of Theorem \ref{thm1} we prove Corollary \ref{homocor2} after a preliminary lemma.

\begin{lem}\label{detallito}
Let $\Ss\subset\R^n$ be a semialgebraic set and let $\Psi_1:\Ss\times[0,\frac{1}{2}]\to\Tt$ and $\Psi_2:\Ss\times[\frac{1}{2},1]\to\Tt$ be Nash functions such that $\Psi_1(\cdot,\frac{1}{2})=\Psi_2(\cdot,\frac{1}{2})$. Let $m>\mu\geq1$ be integers such that $m$ is odd and define the semialgebraic homeomorphism $\eta_m:[0,1]\to [0,1],\ t\mapsto\frac{(2t-1)^m}{2}+\frac{1}{2}$, whose restriction $\eta_m|_{[0,1]\setminus\{\frac{1}{2}\}}:[0,1]\setminus\{\frac{1}{2}\}\to[0,1]\setminus\{\frac{1}{2}\}$ is a Nash diffeomorphism. Then $\Psi^*:\Ss\times[0,1]\to\Tt,\ (x,t)\mapsto\Psi(x,\eta_m(t))$, where
$$
\Psi:\Ss\times[0,1]\to\Tt, (x,t)\mapsto\begin{cases}
\Psi_1(x,t)&\text{if $t\in[0,\frac{1}{2}]$,}\\
\Psi_2(x,t)&\text{if $t\in[\frac{1}{2},1]$,}
\end{cases}
$$
is a (well-defined) ${\mathcal S}^\mu$ map.
\end{lem}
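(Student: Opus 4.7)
The plan is to verify that $\Psi^*$ is well-defined and piecewise Nash on each of the two closed half-intervals, and then to construct an explicit $\mathcal{S}^\mu$ extension of $\Psi^*$ to an open semialgebraic neighborhood of $\Ss\times[0,1]$ in $\R^{n+1}$ using two semialgebraic ``clipped'' reparametrizations of $\eta_m$.

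The first step is routine: since $\eta_m(\tfrac{1}{2})=\tfrac{1}{2}$ and $\Psi_1(\cdot,\tfrac{1}{2})=\Psi_2(\cdot,\tfrac{1}{2})$, the map $\Psi$ is single-valued, continuous, and semialgebraic, hence so is $\Psi^*=\Psi\circ(\id_\Ss,\eta_m)$; moreover $\Psi^*|_{\Ss\times[0,1/2]}=\Psi_1\circ(\id_\Ss,\eta_m)$ and $\Psi^*|_{\Ss\times[1/2,1]}=\Psi_2\circ(\id_\Ss,\eta_m)$ are Nash, because $\eta_m\in\R[t]$. The regularity at the seam will come from the elementary identity $\eta_m(t)-\tfrac{1}{2}=\tfrac{(2t-1)^m}{2}$, whence $\eta_m^{(k)}(\tfrac{1}{2})=0$ for $1\leq k\leq m-1$, and in particular for $1\leq k\leq\mu$ since $m-1\geq\mu$.

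The main step is the extension construction. I would introduce
\begin{align*}
\eta_m^-(t)&:=\min\{\eta_m(t),\tfrac{1}{2}\}=\tfrac{1}{2}+\tfrac{(2t-1)^m-|2t-1|^m}{4},\\
\eta_m^+(t)&:=\max\{\eta_m(t),\tfrac{1}{2}\}=\tfrac{1}{2}+\tfrac{(2t-1)^m+|2t-1|^m}{4},
\end{align*}
which are semialgebraic and, since $|u|^m$ is $\Cont^{m-1}$ on $\R$ for odd $m$, belong to $\mathcal{S}^{m-1}(\R)\subset\mathcal{S}^\mu(\R)$. Fixing Nash extensions $\tilde\Psi_j:\Omega_j\to\R^{n'}$ of $\Psi_j$ on open semialgebraic neighborhoods $\Omega_1\supset\Ss\times[0,\tfrac{1}{2}]$ and $\Omega_2\supset\Ss\times[\tfrac{1}{2},1]$, I would define
$$\tilde\Psi^*(x,t):=\tilde\Psi_1(x,\eta_m^-(t))+\tilde\Psi_2(x,\eta_m^+(t))-\tilde\Psi_2(x,\tfrac{1}{2})$$
on the open semialgebraic neighborhood $W:=\{(x,t)\in\R^n\times\R:(x,\eta_m^-(t))\in\Omega_1,\ (x,\eta_m^+(t))\in\Omega_2,\ (x,\tfrac{1}{2})\in\Omega_2\}$ of $\Ss\times[0,1]$. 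Then $\tilde\Psi^*$ is $\mathcal{S}^\mu$ as a linear combination of compositions of Nash maps with $\mathcal{S}^\mu$ functions. To check $\tilde\Psi^*|_{\Ss\times[0,1]}=\Psi^*$, I split at $t=\tfrac{1}{2}$: on $\Ss\times[0,\tfrac{1}{2}]$ the identity $\eta_m^+(t)=\tfrac{1}{2}$ cancels the last two summands and leaves $\Psi_1(x,\eta_m(t))$; on $\Ss\times[\tfrac{1}{2},1]$, $\eta_m^-(t)=\tfrac{1}{2}$, and the matching condition $\tilde\Psi_1(x,\tfrac{1}{2})=\tilde\Psi_2(x,\tfrac{1}{2})$ for $x\in\Ss$ produces $\Psi_2(x,\eta_m(t))$.

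The main obstacle is the design of the extension itself: a naive piecewise extension by $\tilde\Psi_j(x,\eta_m(t))$ on each half is typically multi-valued at $\{t=\tfrac{1}{2}\}$ away from $\Ss$, since the Nash extensions $\tilde\Psi_1,\tilde\Psi_2$ need not coincide there, while a convex-combination gluing fails to restrict to $\Psi^*$ on $\Ss$. The formula above circumvents both obstacles at once: pulling back along the globally $\Cont^{m-1}$ functions $\eta_m^\pm$ encodes the flatness of $\eta_m$ across the seam, and the correction term $-\tilde\Psi_2(x,\tfrac{1}{2})$ (constant in $t$) vanishes on $\Ss$, enforcing agreement with $\Psi^*$ there.
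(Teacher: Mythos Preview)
Your proof is correct, and it takes a genuinely different route from the paper's.

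The paper argues by a Taylor-type factorization: writing $\Phi:=\Psi_1(\cdot,\tfrac12)=\Psi_2(\cdot,\tfrac12)$ and $\Psi_i=\Phi+(\t-\tfrac12)\Gamma_i$ with $\Gamma_i$ Nash, it computes
\[
\Psi^*(x,t)=\Phi(x)+2^{m-1}(t-\tfrac12)^m\,\Gamma_i\bigl(x,\eta_m(t)\bigr)\quad\text{on each half},
\]
and then observes that the piecewise tail is $(t-\tfrac12)^{m-1}$ times a continuous semialgebraic function that is Nash off $\{t=\tfrac12\}$; since $m-1\geq\mu$, the high-order factor forces all partial derivatives up to order $\mu$ to match across the seam. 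Your approach instead packages the flatness of $\eta_m$ at $\tfrac12$ into the clamped reparametrizations $\eta_m^\pm=\tfrac12+\tfrac{(2t-1)^m\mp|2t-1|^m}{4}$, which are globally $\mathcal{S}^{m-1}$ because $|u|^m\in\Cont^{m-1}(\R)$ for odd $m$, and then writes down a single closed-form $\mathcal{S}^\mu$ extension $\tilde\Psi_1(\cdot,\eta_m^-)+\tilde\Psi_2(\cdot,\eta_m^+)-\tilde\Psi_2(\cdot,\tfrac12)$ on an explicit open semialgebraic neighborhood. Both arguments hinge on $m-1\geq\mu$, but yours uses it through the regularity of $|u|^m$ rather than through Leibniz-type derivative cancellation. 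What your approach buys is that the extension to an open set is exhibited directly, with no need to factor out $(\t-\tfrac12)$ from the $\Psi_i$ or to argue separately that a piecewise-Nash function multiplied by a sufficiently high power of the seam equation is $\mathcal{S}^\mu$; what the paper's approach buys is a more transparent description of $\Psi^*$ itself as ``$\Phi$ plus a term flat of order $m$ at $t=\tfrac12$''.
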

\begin{proof}
Observe first that $\eta_m$ satisfies
\begin{itemize}
\item $\eta_m(0)=0$, $\eta_m(\tfrac{1}{2})=\frac{1}{2}$ and $\eta_m(1)=1$,
\item $\eta_m^{(\ell)}(\tfrac{1}{2})=0$ for each $\ell=1,\ldots,m-1$,
\item $\eta_m([0,\frac{1}{2}])=[0,\frac{1}{2}]$ and $\eta_m([\frac{1}{2},1])=[\frac{1}{2},1]$.
\end{itemize}
Thus, $\Psi^*(\Ss\times[0,1])\subset\Tt$, so $\Psi^*$ is a well-defined continuous semialgebraic function. 

Define $\Phi:=\Psi_1(\cdot,\frac{1}{2})=\Psi_2(\cdot,\frac{1}{2})$ and let $\Gamma_1:\Ss\times[0,\frac{1}{2}]\to\R^n$ and $\Gamma_2:\Ss\times[\frac{1}{2},1]\to\R^n$ be Nash functions such that $\Psi_i=\Phi+(\t-\frac{1}{2})\Gamma_i$. As
$$
\Psi:\Ss\times[0,1]\to\Tt,\ (x,t)\mapsto\begin{cases}
\Phi(x)+(t-\frac{1}{2})\Gamma_1(x,t)&\text{if $t\in[0,\frac{1}{2}]$,}\\
\Phi(x)+(t-\frac{1}{2})\Gamma_2(x,t)&\text{if $t\in[\frac{1}{2},1]$,}
\end{cases}
$$
we deduce
$$
\Psi^*:\Ss\times[0,1]\to\Tt,\ (x,t)\mapsto\begin{cases}
\Phi(x)+2^{m-1}(t-\frac{1}{2})^m\Gamma_1(x,\eta_m(t))&\text{if $t\in[0,\frac{1}{2}]$,}\\
\Phi(x)+2^{m-1}(t-\frac{1}{2})^m\Gamma_2(x,\eta_m(t))&\text{if $t\in[\frac{1}{2},1]$.}
\end{cases}
$$
As $\Gamma_1,\Gamma_2$ are Nash functions, they are locally bounded, so
$$
\Theta:\Ss\times[0,1]\to\R^n,\ (x,t)\mapsto\begin{cases}
2^{m-1}(t-\frac{1}{2})\Gamma_1(x,\eta_m(t))&\text{if $t\in[0,\frac{1}{2}]$,}\\
2^{m-1}(t-\frac{1}{2})\Gamma_2(x,\eta_m(t))&\text{if $t\in[\frac{1}{2},1]$}
\end{cases}
$$
is a continuous semialgebraic map on $\Ss\times[0,1]$ that is Nash on $\Ss\times([0,1]\setminus\{\frac{1}{2}\})$. As $m-1\geq\mu$, we deduce $(\t-\frac{1}{2})^{m-1}\Theta$ is an ${\mathcal S}^\mu$ map. Define 
$$
\varphi:\Ss\times[0,1]\to \R^n,\ (x,t)\mapsto\Phi(x).
$$
As $\varphi$ is Nash on $\Ss\times[0,1]$, we conclude that $\Psi^*=\varphi+(\t-\frac{1}{2})^{m-1}\Theta$ is an ${\mathcal S}^\mu$ map, as required.
\end{proof}

\begin{proof}[Proof of Corollary \em\ref{homocor2}]
By Theorem \ref{pushdiffeo} there exists a Nash map
$$
\sigma:\Qq\times [0,\tfrac{1}{2}]\to\Qq,\ (x,t)\mapsto\sigma_t(x):=\sigma(x,t)
$$
such that $\sigma_0=\id_{\Qq}$ and $\sigma_t(\Qq)\subset\Int(\Qq)$ for each $t\in(0,\tfrac{1}{2}]$. Let $(\Omega,\nu)$ be a Nash tubular neighborhood of $\Int(\Qq)$ in $\R^n$ and denote $h:=\sigma_{\frac{1}{2}}\circ f$. As $\R^n\setminus\Omega$ is closed and $h(\Ss)\subset\Int(\Qq)\subset\Omega$, the continuous semialgebraic function
$$
\veps:\Ss\to\R,\ x\mapsto\tfrac{1}{2}\dist(h(x),\R^n\setminus\Omega)
$$
is by \cite[Prop.2.2.8]{bcr} strictly positive. By Theorem \ref{thm1} there exists a Nash map $g:\Ss\to\Int(\Qq)$ that is $\veps$-close to $h$ with respect to the ${\mathcal S}^{\mu}$ topology. Consider the continuous semialgebraic homotopy
$$
\Phi:\Ss\times[\tfrac{1}{2},1]\to\R^n,\ (x,t)\mapsto(2-2t)h(x)+(2t-1)g(x).
$$
As $\|h(x)-g(x)\|<\veps(x)$ for each $x\in\Ss$, we have 
\begin{align}\label{homodist2}
\begin{split}
\|h(x)-\Phi_t(x)\|&=\|h(x)-(2-2t)h(x)-(2t-1)g(x)\|\\
&=|2t-1|\|h(x)-g(x)\|\leq\|h(x)-g(x)\|<\veps(x)=\tfrac{1}{2}\dist(h(x),\R^n\setminus\Omega),
\end{split}
\end{align}
so $\Phi_t(x)\in\Omega$ for each $(x,t)\in\Ss\times[\tfrac{1}{2},1]$. As $\nu:\Omega\to\Int(\Qq)$ is a Nash retraction, 
$$
\Psi:\Ss\times[\tfrac{1}{2},1]\to\Int(\Qq),\ (x,t)\mapsto\nu\circ \Phi(x,t)
$$
is a well-defined ${\mathcal S}^{\mu}$ homotopy between $h$ and $g$. Let $m>\mu$ be an odd integer and let $\eta_m:[0,1]\to [0,1],\ t\mapsto\frac{(2t-1)^m}{2}+\frac{1}{2}$, which is a Nash homemorphism. By Lemma \ref{detallito}
$$
\Gamma:\Ss\times[0,1]\to\Qq,\ (x,t)\mapsto \begin{cases}
\sigma(f(x),\eta_m(t)), &\text{if } 0\leq t\leq\frac{1}{2},\\
\Psi(x,\eta_m(t)), &\text{if }\frac{1}{2}\leq t\leq 1
\end{cases}
$$
is the required ${\mathcal S}^{\mu}$ homotopy between $f$ and the Nash map $g:\Ss\to\Int(\Qq)$.
\end{proof}

\subsection{Further developments}
We prove next Lemmas \ref{fdlem1} and \ref{fdlem2} already stated in the Introduction.

\begin{proof}[Proof of Lemma \em\ref{fdlem1}]
Suppose $\Tt$ is not locally connected by analytic paths. Then there exists a point $p\in\Tt$ and an open semialgebraic neighborhood $W_0\subset\R^n$ of $p$ such that 
for each open semialgebraic neighborhood $W\subset W_0$ of $p$, the intersection $\Tt\cap W$ is not connected by analytic paths. Observe that $p$ is a non-isolated point of $\Tt$. By \cite[Thm.9.3.6]{bcr} there exists $\veps>0$ such that $\ol{\Bb}_n(p,\veps)\subset W_0$ and a semialgebraic homeomorphism $\varphi:\ol{\Bb}_n(p,\veps)\to\ol{\Bb}_n(p,\veps)$ such that:
\begin{itemize}
\item $\|\varphi(x)-p\|=\|x-p\|$ for every $x\in\ol{\Bb}_n(p,\veps)$.
\item $\varphi|_{\sph^{n-1}(p,\veps)}$ is the identity map.
\item $\varphi^{-1}(\Tt\cap\ol{\Bb}_n(p,\veps))$ is the cone with vertex $p$ and basis $\Tt\cap\sph^{n-1}(p,\veps)$.
\end{itemize}
Thus, $\Tt$ is locally connected by continuous semialgebraic maps at $p$. We claim: {\em $\Tt_\veps:=\Tt\cap\ol{\Bb}_n(p,\veps)$ is pure dimensional}. 

Suppose $\Tt_\veps$ is not pure dimensional. As $\varphi^{-1}(\Tt_\veps)$ is the cone with vertex $p$ and basis $\Tt\cap\sph^{n-1}(p,\veps)$, we deduce that $\Tt_\veps$ is not pure dimensional at $p$. Thus, there exist two closed semialgebraic subsets $\Rr_1$ and $\Rr_2$ of $\Tt_\veps$ such that $\dim(\Rr_1)<\dim(\Rr_2)$, $\Rr_1\cap\Rr_2=\{p\}$ and $\Rr_1\cup\Rr_2=\Tt_\veps$. Let $Y_1$ be the Zariski closure of $\Rr_1$, which has dimension $\dim(\Rr_1)$. In particular, $\Rr_1\setminus\Rr_2=\Rr_1\setminus\{p\}$ and $\Rr_2\setminus Y_1$ are semialgebraic sets adherent to $p$ of dimensions $\dim(\Rr_1)$ and $\dim(\Rr_2)$.

By the Nash curve selection lemma \cite[Prop.8.1.13]{bcr} there exist Nash arcs $\gamma_1:[-1,0]\to(\Rr_1\setminus\Rr_2)\cup\{p\}$ and $\gamma_2:[0,1]\to(\Rr_2\setminus Y_1)\cup\{p\}$ such that $\gamma_i(0)=p$, $\gamma_1([-1,0))\subset\Rr_1$ and $\gamma_2((0,1])\subset\Rr_2\setminus Y_1$. The semialgebraic map $\gamma:=\gamma_1*\gamma_2:[-1,1]\to\Rr_1\cup(\Rr_2\setminus Y_1)\cup\{p\}$ is continuous. Let $\psi:[-1,1]\to[-1,1]$ be a bijective ${\mathcal S}^\mu$ map such that $\psi(t)={\rm sign}(t)t^{2\mu}$ around $0$. The composition $\gamma\circ\psi:[-1,1]\to\Rr_1\cup(\Rr_2\setminus Y_1)\cup\{p\}\subset\Tt$ is an ${\mathcal S}^\mu$ map. Observe that $\gamma$ does not admit a close Nash approximation $\eta:[-1,1]\to\Tt$, because a part of the image of $\eta$ should be contained in $Y_1$, so by the identity principle for Nash functions the whole image should be contained in $Y_1$, which is a contradiction, because another part of the image of $\eta$ should be contained in $\Rr_2\setminus Y_1$. Thus, $\Tt_\veps$ is pure dimensional, as claimed.

Let $X\subset\R^n$ be the Zariski closure of $\Tt_\veps$ in $\R^n$. Let $\widetilde{X}\subset\C^n$ be the complexification of $X$ and let $\Sing(X):=\Sing(\widetilde{X})\cap\R^n$. Let $\Reg(\Tt_\veps)$ be the interior of $\Tt_\veps\setminus\Sing(X)$ in $X\setminus\Sing(X)$, which is a Nash manifold of the same dimension as $\Tt_\veps$. As $\Tt_\veps$ is pure dimensional, $\Reg(\Tt_\veps)$ is dense in $\Tt_\veps$. Recall also that by \cite[Main Thm.1.8 \& Lem.7.4]{fe2} any semialgebraic set between a connected Nash manifold and its closure is connected by analytic paths. Thus, by \cite[Prop.7.8]{fe2} there exist two connected components $M_1$ and $M_2$ of $\Reg(\Tt_\veps)$ such that for each pair of points $y_1\in M_1$ and $y_2\in M_2$ there exists no analytic path between $y_1$ and $y_2$.

Let $\alpha:[-1,1]\to\Tt_\veps$ be a continuous semialgebraic path such that $\alpha(-1)=y_1$ and $\alpha(1)=y_2$ (recall that $\Tt_\veps$ is connected by semialgebraic paths). By \cite[\S1.2.2]{fe3} there exists a finite set $\eta(\alpha)\subset[-1,1]$ such that $\alpha|_{[-1,1]\setminus\eta(\alpha)}$ is a Nash function. Let us `reparameterize' $\alpha$ in order to transform it into an element of ${\mathcal S}^\mu([-1,1],\Tt)$.

Pick a value $t_0\in\eta(\alpha)$. Assume $t_0\in(-1,1)$. The case of the extremes $\{-1,1\}$ of the interval $[-1,1]$ is similar, but simpler and we leave the details to the reader. After the translation $t-t_0$, we may assume that $t_0=0$. By \cite[p.166]{bcr} the ring $\displaystyle{\lim_{\longrightarrow}}_{\delta>0}{\mathcal S}^0((0,\delta],\R)$ is isomorphic to the field of algebraic Puiseux series $\R[[\t^*]]_{\rm alg}$. Let $p$ be the smallest positive integer such that $\alpha(\t^p)\in\R[[\t]]_{\rm alg}$. Let $q$ be the smallest positive integer such that $\alpha(-\t^q)\in\R[[\t]]_{\rm alg}$. We have to change $t$ around $t_0$ by 
$$
\theta_{t_0}:t\mapsto\begin{cases}
t_0-(t_0-t)^{q(\mu+1)}&\text{if $t<t_0$,}\\
t_0+(t-t_0)^{p(\mu+1)}&\text{if $t>t_0$.}
\end{cases}
$$
We proceed analogously with all the values of $\eta(\alpha)$ and we construct, using an ${\mathcal S}^\mu$ partition of unity associated to a suitable open semialgebraic partition of the interval $[-1,1]$, an ${\mathcal S}^\mu$ function $\theta:[-1,1]\to[-1,1]$ such that $\theta(-1)=-1$, $\theta(1)=1$ and $\theta$ is around each point $t_0\in\eta(\alpha)$ equal to $\theta_{t_0}$. The composition $\alpha\circ\theta:[-1,1]\to\Tt_\veps$ is an ${\mathcal S}^\mu$ path such that $(\alpha\circ\theta)(-1)=y_1$ and $(\alpha\circ\theta)(1)=y_2$.

As $\Tt$ is a $({\mathcal N},\mu)$-${\tt ats}$, there exist a Nash path $\beta:[-1,1]\to\Tt$ close to $\alpha$ in the ${\mathcal S}^\mu$-topology of ${\mathcal S}^\mu([-1,1],\Tt)$. Thus, $\beta$ is a Nash path inside $\Tt_\veps$ that connects a point of $M_1$ with a point of $M_2$ (because $M_1$ and $M_2$ are open semialgebraic subsets of $\Tt$), which is a contradiction. Consequently, $\Tt$ is locally connected by analytic paths, as required. 
\end{proof}

\begin{proof}[Proof of Lemma \em\ref{fdlem2}]
As $\Tt$ is locally connected by analytic paths, it is locally pure dimensional \cite[Lem.7.1]{fe2}, so the local dimension function on $\Tt$ is locally constant. As $\Tt$ is connected, it is pure dimensional. Let $X\subset\R^n$ be the Zariski closure of $\Tt$ in $\R^n$. Let $\widetilde{X}\subset\C^n$ be the complexification of $X$ and let $\Sing(X):=\Sing(\widetilde{X})\cap\R^n$. Let $\Reg(\Tt)$ be the interior of $\Tt\setminus\Sing(X)$ in $X\setminus\Sing(X)$, which is a Nash manifold of the same dimension as $\Tt$. Let $\Tt_1,\ldots,\Tt_r$ be the connected components of $\Reg(\Tt)$. As $\Tt$ is connected, we find inductively (after reordering the indices $i=2,\ldots,r$) points $p_i\in\Tt\cap\cl(\Tt_i)\cap\bigcup_{j=1}^{i-1}\cl(\Tt_j)$ for $i=2,\ldots,r-1$. As $\Tt$ is locally pure dimensional, we deduce that all the $\Tt_i$ have the same dimension. As $\Tt$ is locally connected by analytic paths at $p_i$, there exists an open semialgebraic neighborhood $W_i$ of $p_i$ such that $\Tt\cap W_i$ is connected by analytic paths. Let $x_i\in W_i\cap\Tt_i$ and $y_i\in W_i\cap\bigcup_{j=1}^{i-1}\Tt_j$. Let $\alpha:[-1,1]\to W_i\cap\Tt$ be a analytic path such that $\alpha_i(-1)=x_i$ and $\alpha_i(1)=y_i$. By \cite[Lem.2.9]{fe2} we may assume that $\alpha_i$ is a Nash path.

Using the previous Nash manifolds $\Tt_i$ and the Nash paths $\alpha_i:[-1,1]\to W_i\cap\Tt$ we find (maybe repeated) Nash manifolds $\Ss_1,\ldots,\Ss_m\in\{\Tt_1,\ldots,\Tt_r\}$ and Nash arcs $\beta_j:[-1,1]\to\Ss_{j-1}\cup\{q_j\}\cup\Ss_j$ where $q_j\in\cl(\Ss_{j-1})\cap\cl(\Ss_j)$ for $j=2,\ldots,m$ such that $\Ss_1=\Tt_1$, $\Ss_m=\Tt_r$ and $\{\Ss_1,\ldots,\Ss_m\}=\{\Tt_1,\ldots,\Tt_r\}$. Pick two points $x,y\in\Tt$. We may assume there exists indices $1\leq k\leq\ell\leq m$ such that $x\in\cl(\Ss_k)$ and $y\in\cl(\Ss_\ell)$. By \cite[Main Thm.1.8]{fe3} there exist a Nash path $\alpha:[0,1]\to\Tt$ such that $\alpha(0)=x$ and $\alpha(1)=y$. Consequently, $\Tt\subset\R^n$ is connected by analytic paths, as required.
\end{proof}

We present next an enlightening example of a semialgebraic set connected by analytic paths that is not locally connected by analytic paths.

\begin{example}\label{counter}
Let 
\begin{multline*}
\Tt:=\{(4\x^2-\y^2)(4\y^2-\x^2)\geq0,\y\geq0,\x^2+\y^2\leq 4\}\\
\cup\{(4\x^2-\y^2)\leq0,(\x^2+\y^2-1)(\x^2+\y^2-4)\leq 0,\y\geq0\}\subset\R^2, 
\end{multline*}
which is a (path) connected semialgebraic set, see Figure \ref{fig11}. As we have seen in the proof of Lemma \ref{fdlem1}, every semialgebraic set is locally connected by continuous semialgebraic paths. The reader can check that the ${\mathcal S}^\mu$ semialgebraic map $\alpha:[-1,1]\to\Tt,\ t\mapsto(t^{2\mu+1},|t|t^{2\mu})$ admits no Nash approximation. We claim: {\em The semialgebraic set $\Tt$ is connected by analytic paths, but not locally connected by analytic paths at the origin.} 

\begin{figure}[!ht]
\begin{center}
\begin{tikzpicture}[scale=0.75]

\draw[fill=gray!80, fill opacity=0.3,thick=1.5pt,draw] (4.5,1) -- (0.5,3) arc (153.43494882292201:26.56505117707799:4.47213595499958) -- (7,2.25) -- (4.5,1) -- (5.75,3.525) arc (63.43494882292202:116.43494882292201:2.8);

\draw[thick=1.5pt] (4.5,1) -- (0.5,3);
\draw[thick=1.5pt] (4.5,1) -- (8.5,3);
\draw[thick=1.5pt] (4.5,1) -- (3.25,3.525);
\draw[thick=1.5pt] (4.5,1) -- (5.75,3.525);

\draw[->] (4.5,0) -- (4.5,6);
\draw[->] (0,1) -- (9,1);

\draw[fill=black] (4.5,1) circle (0.75mm);
\draw[fill=black] (5.5,2) circle (0.75mm);
\draw[fill=black] (3.5,2) circle (0.75mm);

\draw (6.1,2.4) node{\footnotesize$(\veps,\veps)$};
\draw (2.9,2.4) node{\footnotesize$(-\veps,\veps)$};

\draw (5,5) node{\small$\Tt$};

\end{tikzpicture}
\end{center}
\caption{Semialgebraic set $\Tt\subset\R^2$\label{fig11}}
\end{figure}
\end{example}
\begin{proof}
The semialgebraic $\Tt$ is connected by analytic paths, because its interior is a connected Nash manifold (use \cite[Thm.9.3.6]{bcr}). Let us check that $\Tt$ is not locally connected by analytic paths at the origin. Pick the points $p_1:=(-\veps,\veps),p_2:=(\veps,\veps)\in\Tt\cap\Bb_2((0,0),2\veps)$ for some $0<\veps<\frac{1}{4}$ and assume that there exists an analytic path $\alpha:[0,1]\to\Tt\cap\Bb_2((0,0),2\veps)$ such that $\alpha(0)=p_1$ and $\alpha(1)=p_2$. Consider the closed semialgebraic sets $\Cc_1:=\Tt\cap\Bb_2((0,0),2\veps)\cap\{\x\leq0\}$ and $\Cc_2:=\Tt\cap\Bb_2((0,0),2\veps)\cap\{\x\geq0\}$, which satisfy $\Tt\cap\Bb_2((0,0),2\veps)=\Cc_1\cup\Cc_2$. Both $\Cc_1$ and $\Cc_2$ are convex, so they are connected by analytic paths (in fact, they are connected by segments) and $\Cc_1\cap\Cc_2=\{(0,0)\}$. Define $\Cc_i^*:=\{\lambda w:\ w\in\Cc_i,\ \lambda\in\R\}$ for $i=1,2$. Note that $\Tt\cap\Bb_2((0,0),2\veps)\cap\{\x<0\}=\Cc_1\setminus\{(0,0)\}$ and $\Tt\cap\Bb_2((0,0),2\veps)\cap\{\x>0\}=\Cc_2\setminus\{(0,0)\}$ are pairwise disjoint open subsets of $\Tt$. We have $0\in\alpha^{-1}(\Cc_1\setminus\{(0,0)\})$ and $1\in\alpha^{-1}(\Cc_2\setminus\{(0,0)\})$, so $t_0:=\inf(\alpha^{-1}(\Cc_2\setminus\{(0,0)\}))>0$. As $\alpha$ is a (non-constant) analytic path, $t_0\in\cl(\alpha^{-1}(\Cc_1\setminus\{(0,0)\}))\cap\cl(\alpha^{-1}(\Cc_2\setminus\{(0,0)\}))$ and $\alpha(t_0)=(0,0)$. As $\alpha^{-1}(\Cc_1\setminus\{(0,0)\})$ and $\alpha^{-1}(\Cc_2\setminus\{(0,0)\})$ are pairwise disjoint open subsets of $[0,1]$, there exists $\delta>0$ such that 
$$
\alpha((t_0-\delta,t_0))\subset\Cc_1\setminus\{(0,0)\}\quad\text{and}\quad\alpha((t_0,t_0+\delta))\subset\Cc_2\setminus\{(0,0)\}.
$$
The tangent direction to $\im(\alpha|_{(t_0-\delta,t_0+\delta)})$ at $\alpha(t_0)=(0,0)$ is the line generated by the vector 
$$
w=\lim_{t\to t_0}\frac{\alpha(t)-\alpha(t_0)}{(t-t_0)^k}=\begin{cases}
\lim_{t\to t_0^{+}}\frac{\alpha(t)-(0,0)}{(t-t_0)^k}\in\Cc_1^*\setminus\{(0,0)\},\\
\lim_{t\to t_0^{-}}\frac{\alpha(t)-(0,0)}{(t-t_0)^k}\in\Cc_2^*\setminus\{(0,0)\},
\end{cases}
$$
where $k$ is the multiplicity of $t_0$ as a root of $\|\alpha\|$. This is a contradiction (because $\Cc_1^*\cap\Cc_2^*=\{(0,0)\}$), so $\Tt\cap\Bb_2((0,0),2\veps)$ is not connected by analytic paths for each $0<\veps<\frac{1}{4}$. Thus, $\Tt$ is not locally connected by analytic paths at the origin.
\end{proof}

\bibliographystyle{amsalpha}

\end{document}